\DeclareFontFamily{OT1}{pzc}{}
\DeclareFontShape{OT1}{pzc}{m}{it}{<-> s * [1.10] pzcmi7t}{}
\DeclareMathAlphabet{\mathpzc}{OT1}{pzc}{m}{it}
\newtheorem{theorem}{Theorem}[section]
\newtheorem{corollary}[theorem]{Corollary}
\newtheorem{definition}[theorem]{Definition}
\newtheorem{lemma}[theorem]{Lemma}
\newtheorem{proposition}[theorem]{Proposition}
\newtheorem{remark}[theorem]{Remark}
\numberwithin{equation}{section}
\def\XXint#1#2#3{{\setbox0=\hbox{$#1{#2#3}{\int}$}
\vcenter{\hbox{$#2#3$}}\kern-.5\wd0}}
\newcommand{\R}{\mathbb{R}}
\newcommand{\N}{\mathbb{N}}
\newcommand{\Z}{\mathbb{Z}}
\newcommand{\Ha}{\mathcal{H}}
\newcommand{\leb}{\mathcal{L}}
\newcommand{\sgn}{\operatorname{sgn}}
\newcommand{\spt}{\operatorname{spt}}
\newcommand{\Mod}{\operatorname{Mod}}
\newcommand{\dist}{\operatorname{dist}}
\newcommand{\diam}{\operatorname{diam}}
\newcommand{\diag}{\operatorname{diag}}
\newcommand{\Lip}{\operatorname{Lip}}
\newcommand{\LIP}{\operatorname{LIP}}
\newcommand{\ud}{\mathrm {d}}
\newcommand{\inv}{^{-1}}
\DeclareMathOperator{\md}{\operatorname{md}}
\newcommand{\J}{\mathbf{J}}
\newcommand{\bb}[1]{\llbracket #1\rrbracket}
\newcommand{\minv}[1]{\accentset{\leftarrow}{#1}}
\title[Multi-valued inverse of a QR map is a QR curve]{Pull-back of differential forms by multi-valued Sobolev maps, and the quasiregularity of the multi-valued inverse of a quasiregular map}
\date{}
\author{Elefterios Soultanis}
\address{Department of mathematics and statistics\\ University of Jyv\"askyl\"a, Mattilanniemi (MaD), PL35, 40014}
\email{elefterios.e.soultanis@jyu.fi}
\keywords{Quasiregular maps, quasiregular curves, multi-valued maps, Almgren space, Sobolev differential forms, weak exterior derivative}
\subjclass[2020]{30C65, 35R70, 53C23, 30L10}
\begin{document}

\begin{abstract}
We use Almgren's framework of multi-valued maps to construct a multi-valued inverse $\minv f:f(\Omega)\to \mathcal A_d(\R^n)$ of a quasiregular map $f:\Omega\to \R^n$ of finite degree $d$. We then develop a pull-back theory of differential forms on $\mathcal A_d(\R^n)$ by Sobolev maps, and use it to show that the multi-valued inverse is a quasiregular $\omega$-curve (in the sense of Pankka) with respect to a natural $n$-form $\omega$ (suitably interpreted). The pull-back theory is of independent interest, and allows us to conclude e.g. higher Sobolev integrability and quasiminimality of the multi-valued inverse.
\end{abstract}

\maketitle

\section{Introduction}

\subsection{Overview}
A continuous map $f\in W^{1,n}_{loc}(M^n,N^k)$ between oriented Riemannian manifolds with $n\le k$ is a $K$-quasiregular $\omega$-curve, if 
\begin{align}\label{eq:QR-curve}
\|\omega\|\circ f\|Df\|^n\le K\star f^*\omega\quad \textrm{ almost everywhere}.
\end{align}
Here, $\omega$ is a (closed) $n$-form on $N$, $\star f^*\omega$ is the Hodge-star dual of $f^*\omega$, and $\|\omega\|$ is the pointwise comass of $\omega$. When $k=n$ and $\omega$ is the volume form on $N$, \eqref{eq:QR-curve} recovers the notion of quasiregular (QR) maps, written more classically as 
\begin{align}\label{eq:QR}
\|Df\|^n\le KJ_f\quad\textrm{ almost everywhere},
\end{align}
where $J_f=\det(Df)$ and $Df$ is the weak differential of $f$. Quasiregular maps have a rich theory \cite{ric93,HKM06}, and enjoy many of the same analytic properties as quasiconformal (QC) maps. While they are not necessarily homeomorphisms, a seminal result of Reshetnyak \cite{res67} states that QR maps are open and discrete, and thus in particular locally injective outside a singular branch set of topological dimension at most $n-2$ \cite{vai66}. We call an open discrete map a branched cover.

Quasiregular curves (QR curves) in turn were introduced by Pankka \cite{pankka20}, and they generalize quasiregular maps by allowing the dimension of the target to be higher than that of the domain; for example pseudoholomorhic curves \cite{gro85} are QR curves. Much of the classical theory of QR maps extends to QR curves \cite{pankka20,pan-onn21}, with one notable exception: Reshetnyak's theorem. Namely, QR curves need not be branched covers. Indeed, if $k>n$, a QR curve cannot be open, while the more non-trivial failure of discreteness follows from an example in \cite{IVV02}.

Compared to QR maps, one of the challenges in QR curves is understanding how the choice of $\omega$ reflects the behaviour of $f$, see for example \cite{hei-pan-pry23,iko-pan24} for some rigidity results and connections with calibrations. 

\bigskip\noindent In this paper, we identify a fruther connection between QR maps and QR curves: we construct a multi-valued inverse $\minv f$ for a QR map $f:\Omega\to \R^n$ of finite degree $d>0$, and show that $\minv f$ is naturally a QR curve (see Theorem \ref{thm:QR-curve}). In particular, QR maps have a local (multi-valued) inverse across the singular branch set (and not only outside it) with suitable Sobolev regularity. Earlier, related constructions include a push-forward operator \cite[p. 263]{HKM06}, \cite[Section 4]{teripekka} and a generalized local inverse \cite[Chapter II.5]{ric93}, \cite[Section 8]{onn-raj09}, both of which are important for obtaining modulus inequalities.

Our approach is based on Almgren's space of unordered tuples \cite{del11} (also known as the symmetric product, cf. \cite[Chapter 4.K]{hat02}), and brings a new perspective to quasiregular theory; together with a pull-back theory by multi-valued maps developed in this paper (see Theorems \ref{thm:sob-pull-back} and \ref{thm:lip-pullback-flat-form}) we obtain e.g. higher integrability of the generalized inverse \cite[Chapter II.5]{ric93} (see Corollary \ref{cor:QR-curve} and Theorem \ref{thm:higher-integrability}) as well as quasiminimality of the multi-valued inverse (see Theorem \ref{thm:quasimin}). We describe the construction and briefly discuss some its advantages over alternative approaches below.

\subsection{Multi-valued inverse}

Given a non-injective (proper) branched cover $f:\Omega\to \R^n$, there are several natural ways to interpret the set valued ``inverse'' map $f(\Omega)\ni y\mapsto f\inv(y)$. One is to consider the natural factorization $f=\bar f\circ P_f$, where $P_f:\Omega\to \widetilde\Omega_f:=\Omega/\sim$ is the projection onto the quotient space given by the equivalence relation $x\sim y$ if $f(x)=f(y)$, and $\bar f:\widetilde \Omega_f\to f(\Omega)$ the induced map, which in this case is a homeomorphism and thus invertible. Another approach is to view the set valued inverse as a map $f(\Omega)\to \Ha(\Omega)$ into the Hausdorff space of non-empty compact subsets of $\Omega$. A drawback of the first approach is that the target space of the inverse depends on $f$ and, more crucially, that $\bar f$ need not be geometrically quasiconformal unless $f$ is a BLD-map. The second approach, on the other hand, does not detect the multiplicities in the preimage, and $\Ha(\Omega)$ does not carry any natural differential forms, nor any bi-Lipschitz embedding in Euclidean space (cf. \cite[Theorem 2.1]{del11}).

Instead, we give a construction based on Almgren's space and local degree theory. Let $Q\ge 1$ be a natural number. The Almgren space $\mathcal A_Q(\R^n)$ of unordered $Q$-tuples in $\R^n$ was used by Almgren in his seminal work on regularity of minimal currents \cite{alm2000}, and was further explored by De Lellis et. al. \cite{del11}. It is a $nQ$-dimensional smooth orbifold, but not a manifold (unless $Q=1$), defined as
\begin{align*}
\mathcal A_Q(\R^n)=(\R^n)^Q/S_Q,\quad d_\mathcal A(\bb x,\bb y)=\inf_{\sigma\in S_Q}\Big(\sum_j^d|x_j-y_{\sigma(j)}|^2\Big)^{1/2},
\end{align*}
where $S_Q$ is the permutation group on $Q$ elements acting naturally on $(\R^n)^Q$. (See Section \ref{sec:almgren-space} for more details.) 

\bigskip\noindent The link between the multi-valued inverse and Almgren space is the following simple observation based on local degree theory. Given a proper branched cover $f:\Omega\to \R^n$ of degree $d>0$ from an open set $\Omega\subset \R^n$, the local index $\iota(f,x)$ is defined for all $x\in\Omega$ and satisfies $\displaystyle \sum_{x\in f\inv(y)}\iota(f,x)=d$, $y\in f(\Omega)$. Thus, $f\inv (y)$ can be regarded as an unordered $d$-tuple in $\Omega$ (counted with multiplicities) for each $y\in f(\Omega)$. The multi-valued inverse $\minv f$ of the proper branched cover $f:\Omega\to \R^n$ of finite degree $d>0$ is the map
\begin{align}\label{eq:minv}
\minv f:f(\Omega)\to \mathcal A_d(\R^n),\quad \minv f(y):= \sum_{x\in f\inv(y)}\iota(f,x)\bb x.
\end{align}
We refer the reader to Section \ref{sec:branched-covers} (see in particular Definition \ref{def:multi-valued-inv}) for the notation and further details. 

\subsection{Statement of results}

While it is not a manifold, $\mathcal A_Q(\R^n)$ nevertheless carries a natural $n$-form given by the trace of the volume form of $\R^n$; that is, the $n$-form 
\begin{align}\label{eq:natural-n-form}
\operatorname{tr}(\operatorname{vol}_{\R^n}):=\sum_j^Q P_j^*(\operatorname{vol}_{\R^n})
\end{align}
on $(\R^n)^Q$ (where $P_j$ is the projection to the $j$th factor) is $S_Q$-invariant: $\sigma^*\omega=\omega$ for all $\sigma\in S_Q$. Using $S_Q$-invariance, we develop a pull-back theory of multi-valued maps (see Sections \ref{sec:diff-forms} and \ref{sec:pull-back}), which is of independent interest.
\begin{theorem}\label{thm:sob-pull-back}
Let $n,m,Q\ge 1$ be natural numbers, $U\subset \R^m$ open, and $f\in W_{loc}^{1,p}(f(\Omega),\mathcal A_Q(\R^n))$. If $\omega$ is a $S_Q$-invariant smooth $k$-form on $(\R^n)^Q$, and either $k+1\le p$ of $\ud \omega=0$ and $k\le p$, then $\omega$ has a well-defined pull-back $f^*\omega \in L^{p/k}_{loc}(U,\bigwedge^k\R^m)$ and moreover
\begin{align*}
    \ud(f^*\omega)=f^*(\ud\omega)
\end{align*}
weakly.
\end{theorem}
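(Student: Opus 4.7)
The natural strategy is to define the pull-back pointwise via the approximate differential of $f$ and then derive the weak identity by approximation. At almost every $x\in U$ the multi-valued Sobolev map $f$ admits an approximate differential $Df(x)$ which is, by the very definition of Sobolev maps into $\mathcal A_Q(\R^n)$, naturally an unordered $Q$-tuple $(L_1,\dots,L_Q)\in \mathcal A_Q(\mathrm{Hom}(\R^m,\R^n))$. For any choice of ordering, the classical formula produces a pull-back $L^*\omega\in \bigwedge^k\R^m$ for $L=(L_1,\dots,L_Q):\R^m\to (\R^n)^Q$, and the $S_Q$-invariance of $\omega$ makes this element independent of the chosen ordering. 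One therefore sets $f^*\omega(x):=L^*\omega$. The elementary pointwise estimate
\begin{equation*}
    |f^*\omega(x)|\le C\|\omega\|_\infty\, |Df(x)|^k,
\end{equation*}
together with $|Df|\in L^p_{loc}(U)$, then yields $f^*\omega\in L^{p/k}_{loc}(U,\bigwedge^k\R^m)$.

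\medskip\noindent
For the weak identity $\ud(f^*\omega)=f^*(\ud\omega)$, approximate $f$ in $W^{1,p}_{loc}$ by smooth (or Lipschitz) $\mathcal A_Q(\R^n)$-valued maps $f_j$, as provided by the Almgren/De Lellis--Spadaro theory of multi-valued Sobolev maps. Each $f_j$ admits local $(\R^n)^Q$-valued lifts $\tilde f_j$ on a cover by sufficiently small balls (chosen around regular points, where the branches of $f_j$ split smoothly), and for these lifts the classical Cartan identity $\ud(\tilde f_j^*\omega)=\tilde f_j^*(\ud\omega)$ holds. The two lifts on overlapping balls differ by an $S_Q$-valued transition map, and $S_Q$-invariance of $\omega$ ensures that the local pull-backs glue to a globally well-defined $k$-form $f_j^*\omega$ on $U$ satisfying $\ud(f_j^*\omega)=f_j^*(\ud\omega)$. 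Testing against a compactly supported smooth $(m-k-1)$-form $\varphi$ then gives
\begin{equation*}
    \int_U f_j^*\omega\wedge \ud\varphi=(-1)^{k+1}\int_U f_j^*(\ud\omega)\wedge \varphi.
\end{equation*}

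\medskip\noindent
Passing to the limit requires continuity of the algebraic pull-back map $L\mapsto L^*\omega$ under $L^p$-convergence of the differentials: since $f_j\to f$ in $W^{1,p}_{loc}$, the multilinearity of $L\mapsto L^*\omega$ in the entries of $L$ combined with H\"older's inequality yields $f_j^*\omega\to f^*\omega$ in $L^{p/k}_{loc}$, and similarly $f_j^*(\ud\omega)\to f^*(\ud\omega)$ in $L^{p/(k+1)}_{loc}$, provided the relevant exponent is at least $1$. The hypothesis $k+1\le p$ thus suffices to take the limit on both sides of the Stokes identity, giving the conclusion. In the closed case $\ud\omega=0$, the right-hand side vanishes identically for each $j$, so only the weaker assumption $k\le p$ (needed to make sense of $f^*\omega\in L^1_{loc}$) is required to conclude $\ud(f^*\omega)=0=f^*(\ud\omega)$.

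\medskip\noindent
The main obstacle is the interplay between the singular orbifold structure of $\mathcal A_Q(\R^n)$ and the standard density/continuity tools: one must both approximate $\mathcal A_Q$-valued Sobolev maps by better ones (in a way compatible with the $S_Q$-quotient structure) and establish the continuity of the algebraic pull-back operation across branch points, where local lifts are not available. Once these two technical ingredients are in place -- presumably by adapting the analysis of \cite{del11} and exploiting $S_Q$-invariance as above -- the gluing and limit passage are essentially formal.
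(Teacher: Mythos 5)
Your overall scaffolding matches the paper's: define $f^*\omega$ pointwise via the approximate differential and $S_Q$-invariance, get the $L^{p/k}_{loc}$ bound from the comass estimate, approximate $f$ by Lipschitz multi-valued maps (the paper uses the Lusin-type approximation of Proposition~\ref{prop:lusin-approx}), prove the weak Stokes identity for the Lipschitz approximants, and pass to the limit, treating the cases $k+1\le p$ and $\ud\omega=0$ separately. That part is fine, and your observation that the Lusin approximation gives enough convergence of the differentials to pass to the limit is correct (the paper instead exploits that $Df_\lambda=Df$ a.e.\ outside $\{f\ne f_\lambda\}$ and uses dominated convergence, which is a cleaner bookkeeping).

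However, there is a genuine gap in the step where you claim the identity $\ud(f_j^*\omega)=f_j^*(\ud\omega)$ for the Lipschitz approximants by taking ``local $(\R^n)^Q$-valued lifts $\tilde f_j$ on a cover by sufficiently small balls ... around regular points'' and gluing via $S_Q$-valued transitions. Multi-valued Lipschitz maps into $\mathcal A_Q(\R^n)$ do not admit local lifts to $(\R^n)^Q$ near points of $F:=f_j^{-1}(\diag\mathcal A_Q(\R^n))$, and near such points there is monodromy that obstructs gluing (think of $z\mapsto\bb{z^{1/2},-z^{1/2}}$ near $0$). You acknowledge this obstacle in your closing paragraph, but no argument is offered to overcome it; this is exactly where the entire technical weight of the theorem sits. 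The paper's proof of the Lipschitz case (Theorem~\ref{thm:lip-pullback-flat-form}) instead proceeds by induction on $Q$: off the closed set $F$ one locally decomposes $f=\bb{f_0,f_1}$ with fewer branches (cf.\ \cite[Proposition 1.6]{del11}), pushes the exterior derivative through tensor products of $S_{d_i}$-invariant forms (Lemma~\ref{lem:pull-back-of-decomp}, Proposition~\ref{prop:flat-form}), and then handles the remaining set $F$ by a careful interpolation between $f$ and $d\bb{b(f)}$ (the barycenter map) in a shrinking neighbourhood of $F$ (Proposition~\ref{prop:multi-valued-approx}). Without some replacement for this mechanism your argument does not establish the Lipschitz case, so the proof as written is incomplete in an essential way.
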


In particular, if $f:U\to \mathcal A_Q(\R^n)$ is a multi-valued (locally) Lipschitz map, the pull-back $f^*\omega$ of any $S_Q$-invariant smooth $k$-form $\omega$ is a (locally) flat $k$-form on $U$ in the sense of Whitney (cf. Theorem \ref{thm:lip-pullback-flat-form}). 

Applying the definition of pull-back in the QR context we show that, if $f:\Omega\to \R^n$ is a quasiregular map, the multi-valued inverse is a quasiregular curve.

\begin{theorem}\label{thm:QR-curve}
Let $f:\Omega\to \R^n$ be a proper quasiregular map of finite degree $d$. Then the multivalued inverse $\minv{f}:f(\Omega)\to \mathcal A_d(\Omega)$ is an $\omega$-quasiregular curve, where $\omega=\operatorname{tr}(\operatorname{vol}_{\R^n})$ is given by \eqref{eq:natural-n-form}. More precisely,  $\minv f\in W_{loc}^{1,n}(f(\Omega),\mathcal A_d(\R^n))$, and 
\[
\|\omega\|\circ \minv f\ |D\minv f|^n\le d^{n/2-1}K_I(f)\star \minv f^*\omega\quad \textrm{almost everywhere on }\Omega.
\]
\end{theorem}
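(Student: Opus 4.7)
My plan is to work on the open dense subset $V_r := f(\Omega)\setminus f(B_f)$, where $B_f$ is the branch set of $f$. By classical results, $B_f$ has topological dimension at most $n-2$, so $f(B_f)$ has Hausdorff dimension at most $n-2$ and in particular vanishing $n$-capacity. Around each $y_0\in V_r$ we may choose a connected neighborhood $V$ and pairwise disjoint neighborhoods $U_1,\ldots,U_d$ of the preimages so that each $f|_{U_k}$ is a homeomorphism onto $V$; the inverses $g_k := (f|_{U_k})^{-1}$ are then QR homeomorphisms with $K_O(g_k) = K_I(f)$ (a consequence of the chain rule for distortions), and, since local indices at regular preimages are $+1$, the multivalued inverse admits the local factorization
\[
\minv f|_V \;=\; \sum_{k=1}^d \bb{g_k}.
\]

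Given this factorization, $\minv f\in W^{1,n}_{\loc}(V_r,\mathcal A_d(\R^n))$ is immediate via local lifts; continuity of $\minv f$ on all of $f(\Omega)$ follows from the properness of $f$ and Hausdorff-continuity of fibers, and a standard removable-singularity argument for continuous metric-valued Sobolev maps extends the Sobolev regularity across the $n$-capacity-zero set $f(B_f)$. Applying Theorem~\ref{thm:sob-pull-back} with $p=k=n$ to the closed $S_d$-invariant form $\omega$ yields $\minv f^*\omega\in L^1_{\loc}$; using $S_d$-invariance and the local factorization, a direct computation gives
\[
\minv f^*\omega \;=\; \sum_{k=1}^d g_k^*\operatorname{vol}_{\R^n} \;=\; \Big(\sum_{k=1}^d \J g_k\Big)\operatorname{vol}_{\R^n},
\]
so $\star\minv f^*\omega = \sum_k \J g_k \ge 0$ a.e.\ on $V_r$.

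The distortion inequality then reduces to linear algebra. The Almgren metric gives, away from the diagonal, the operator-norm bound $|D\minv f|^2 \le \sum_k |Dg_k|^2$, and Jensen's inequality (convexity of $t\mapsto t^{n/2}$ for $n\ge 2$) combined with the QR bound $|Dg_k|^n\le K_I(f)\J g_k$ yields
\[
|D\minv f|^n \;\le\; \Big(\sum_k |Dg_k|^2\Big)^{n/2} \;\le\; d^{n/2-1}\sum_k |Dg_k|^n \;\le\; d^{n/2-1}K_I(f)\sum_k \J g_k \;=\; d^{n/2-1}K_I(f)\star\minv f^*\omega.
\]
A short Cauchy--Binet/AM--GM computation---based on the identity $\sum_j A_j^T A_j = I$ where $A_j$ is the restriction to a unit $n$-plane of the projection to the $j$-th factor of $(\R^n)^d$---shows $\|\omega\|\le 1$, so the factor $\|\omega\|\circ\minv f$ on the left is harmless and the desired inequality holds a.e.\ on $V_r$, hence on $f(\Omega)$. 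I expect the main obstacle to be verifying, within the specific Sobolev framework for maps into $\mathcal A_d(\R^n)$ underpinning the pull-back theorem, that continuity together with Sobolev regularity on the complement of an $n$-capacity-zero set suffices for global membership in $W^{1,n}_{\loc}(f(\Omega),\mathcal A_d(\R^n))$; the remaining steps combine classical QR calculus with Theorem~\ref{thm:sob-pull-back} and elementary inequalities.
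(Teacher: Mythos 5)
Your argument for the pointwise distortion inequality is essentially the paper's: both work on $f(\Omega)\setminus f(B_f)$, use the local factorization $\minv f=\bb{g_1,\ldots,g_d}$ with $g_j$ quasiconformal and $K_O(g_j)=K_I(f)$, compute $\star\minv f^*\omega=\sum_j\J g_j$ and $|D\minv f|^2=\sum_j\|Dg_j\|^2$, and close with the power-mean inequality $\bigl(\sum_j\|Dg_j\|^2\bigr)^{n/2}\le d^{n/2-1}\sum_j\|Dg_j\|^n$. The comass bound $\|\omega\|\equiv 1$ is Lemma~\ref{lem:comass-of-natural-n-form}, proved by an elementary estimate much like the one you sketch. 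That part is sound.

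However, your route to the global Sobolev membership $\minv f\in W^{1,n}_{\loc}(f(\Omega),\mathcal A_d(\R^n))$ contains a genuine gap. You assert that $f(B_f)$ has Hausdorff dimension at most $n-2$ and therefore zero $n$-capacity, but both steps fail. The branch set $B_f$ has \emph{topological} dimension at most $n-2$, and topological dimension does not control Hausdorff dimension: for $n\ge 3$ there are quasiregular maps whose branch set (and its image) have Hausdorff dimension strictly larger than $n-2$, in fact arbitrarily close to $n$ (wild branch set constructions of Heinonen--Rickman). What is true in general is only $|f(B_f)|=0$. Moreover, even a set of Hausdorff dimension exactly $n-2$ does not have zero $n$-capacity: a set of zero $n$-capacity in $\R^n$ necessarily has Hausdorff dimension zero. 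Since Lebesgue-null closed sets are not removable for $W^{1,n}$ (Cantor-staircase type examples have a continuous map with bounded derivative off a null set yet fail to be Sobolev), the removability argument cannot be completed without a much stronger smallness hypothesis on $f(B_f)$ than is available. The paper avoids this issue entirely by proving Sobolev regularity through curve-wise estimates: Theorem~\ref{thm:wug} combines Poletsky's inequality with a Vitali cover of $f(\Omega)\setminus f(B_f)$ by balls where $f$ is locally invertible to produce an $L^n_{\loc}$ $n$-weak upper gradient for $\minv f$ along $\Mod_n$-almost every curve, which is exactly the Newtonian criterion for $W^{1,n}_{\loc}$ and requires no removability across $f(B_f)$. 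To repair your proof you would need to replace the capacity/removability step with such a modulus estimate (or another device that does not rely on the singular set being small in capacity).
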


Here $K_I(f)$ is the inner distortion of the quasiregular map $f$, see Section \ref{sec:branched-covers}. Theorems \ref{thm:sob-pull-back} and \ref{thm:QR-curve} together with standard arguments found e.g. in \cite{pan-onn21} yield in particular the higher integrability of the generalized inverse of \cite[Chapter II.5]{ric93}. Recall that, given a proper quasiregular map $f:\Omega\to \R^n$, the generalized inverse is the map $g_f:f(\Omega)\to\R^n$ given by 
\begin{align}\label{eq:gen-inv}
g:f(\Omega)\to \R^n,\quad g(y):=\sum_{x\in f\inv(y)}\iota(f,x)x.
\end{align}
We record this higher integrability result in Corollary \ref{cor:QR-curve} below, along with the somewhat surprising fact that the generalized inverse \eqref{eq:gen-inv} is also a quasiminimizer of the Dirichlet energy. While both statements follow from the corresponding results for the multi-valued inverse (Theorems \ref{thm:higher-integrability} and \ref{thm:quasimin}), the latter requires further analysis of the structure of Almgren's space, presented in Appendix \ref{sec:appendix}. Indeed in Appendix \ref{sec:appendix} we show that $\mathcal A_d(\R^n)$ splits off a factor $\R^n$, and cannot split off any higher dimensional Euclidean factors, cf. Proposition \ref{prop:splitting}.
\begin{corollary}\label{cor:QR-curve}
Let $f:\Omega\to \R^n$ be a proper quasiregular map, and $g_f$ its generalized inverse. Then 
\begin{itemize}
    \item[(i)] there exists $p>n$ such that $g_f\in W^{1,p}_{loc}(f(\Omega),\R^n)$;
    \item[(ii)] if $h\in W^{1,n}_{loc}(f(\Omega),\R^n)$ is such that $\{h\ne g_f\}$ is essentially contained in a compact set $K\subset f(\Omega)$, then 
    \begin{align*}
        \int_K\|Dg_f\|^n\ud x\le d^{n/2-1}K_I\int_K\|Dh\|^n\ud x.
    \end{align*}
\end{itemize}
\end{corollary}

\bigskip\noindent The multi-valued inverse can also be seen as a homeomorphism $\minv f:f(\Omega)\to \Omega_f$ onto its image $\Omega_f:=\minv f(f(\Omega))\subset \mathcal A_d(\R^n)$ (Lemma \ref{lem:homeo}). Thus, although $\mathcal A_d(\R^n)$ is not a manifold, $\Omega_f$ is a metric $n$-manifold when equipped with the metric from $\mathcal A_d(\R^n)$. In the following theorem we obtain that $\minv f:f(\Omega)\to \Omega_f$ is geometrically quasiconformal.

\begin{theorem}\label{thm:geom-QC}
Let $f:\Omega\to \R^n$ be a proper quasiregular map of finite degree $d$, and equip $\Omega_f=\minv f(f(\Omega))$ with the metric from $\mathcal A_d(\R^n)$ and the Hausdorff $n$-measure on $\Omega_f$. 
\begin{itemize}
    \item[(1)] $\Omega_f$ is $n$-rectifiable, upper Ahlfors $n$-regular and satisfies the infinitesimal $n$-Poincar\'e inequality;
    \item[(2)] $\minv f:f(\Omega)\to \Omega_f$ is geometrically quasiconformal. More precisely, 
\begin{align*}
    \frac 1{K_IK_O}\Mod_n\Gamma\le \Mod_n\minv f(\Gamma)\le K_IK_O\Mod_n\Gamma
\end{align*}
fir any path family $\Gamma $ in $f(\Omega)$.
\end{itemize}
\end{theorem}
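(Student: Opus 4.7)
My approach is to establish the modulus inequalities in (2) first, and to deduce the measure-theoretic and metric properties in (1) from them via standard transfer arguments. The proof rests on three observations: since $\minv f$ is a homeomorphism onto $\Omega_f$ by Lemma \ref{lem:homeo}, each path $\tilde\gamma$ in $\Omega_f$ corresponds to a $d$-tuple of lifts $(\tilde\gamma_1,\ldots,\tilde\gamma_d)$ of $\gamma:=(\minv f)^{-1}\circ\tilde\gamma$ through the branched cover $f$; the Almgren metric yields $|\tilde\gamma'(t)|_{\mathcal A}=(\sum_i|\tilde\gamma_i'(t)|^2)^{1/2}$; and by Theorem \ref{thm:QR-curve}, $|D\minv f|^n$ is controlled by $\star\minv f^*\omega$, which off the image $f(B_f)$ of the branch set equals $\sum_{x\in f^{-1}(y)}\iota(f,x)/\J f(x)$, the sum of local-inverse Jacobians.

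For the modulus upper bound, given $\rho$ admissible for $\Gamma\subset f(\Omega)$, I would take the $S_d$-invariant density $\tilde\rho(\tilde y):=\tfrac1d\bigl(\sum_{x\in f^{-1}(y)}\rho(y)^2\|Df(x)\|^2\bigr)^{1/2}$ on $\Omega_f$. Cauchy--Schwarz together with the chain-rule estimate $|\gamma'|\le\|Df(\tilde\gamma_i)\|\cdot|\tilde\gamma_i'|$ yields admissibility $\int_{\tilde\gamma}\tilde\rho\,ds_{\mathcal A}\ge 1$, and the $L^n$-mass bound follows from the power-mean inequality, the area formula, and the outer distortion $\|Df\|^n\le K_O\J f$, contributing the factor $K_O$. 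The lower bound is dual: for $\tilde\rho$ admissible for $\minv f(\Gamma)$, I would define $\rho(y):=\tilde\rho(\minv f(y))\bigl(\sum_{x\in f^{-1}(y)}\|Df(x)^{-1}\|^2\bigr)^{1/2}$; admissibility uses $|\tilde\gamma'|_{\mathcal A}\le(\sum\|Df^{-1}\|^2)^{1/2}|\gamma'|$, and the mass estimate uses the inner distortion inequality $\|Df^{-1}\|^n\J f\le K_I$, contributing the remaining factor.

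For part (1), upper Ahlfors regularity follows from an area-formula computation: Theorem \ref{thm:QR-curve} yields $\J_n\minv f\le d^{n/2-1}K_I\star\minv f^*\omega$ almost everywhere, and the inclusion $f^{-1}((\minv f)^{-1}(B(\tilde y,r)))\subset\bigcup_i B(x_i,r)$ gives $\mathcal H^n(B(\tilde y,r)\cap\Omega_f)\lesssim d^{n/2}K_I\,r^n$ after changing variables via $f$. Rectifiability follows from the local decomposition $\minv f=\sum_i\iota_i\bb{g_i}$ off the $\mathcal H^n$-null set $f(B_f)$ combined with Lusin approximation of $\minv f\in W^{1,n}$. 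The infinitesimal $n$-Poincar\'e inequality is then deduced from upper regularity and the $n$-Loewner condition transferred from $f(\Omega)\subset\R^n$ via (2), invoking the Heinonen--Koskela framework. The main technical obstacle lies in the modulus computations: specifically in justifying the area formula for Sobolev maps into the orbifold $\mathcal A_d(\R^n)$ (which falls outside standard frameworks) and in the bookkeeping needed to reach the sharp constant $K_IK_O$. Off $f(B_f)$ this reduces to classical multi-sheet computations, while extension across the branch image relies on density arguments based on the Sobolev regularity established in Theorem \ref{thm:QR-curve}.
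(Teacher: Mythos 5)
Your proposal inverts the paper's logical structure in a way that creates two genuine gaps, one in each half.

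The paper proves part (1) first and then uses it to prove part (2). The infinitesimal $n$-Poincar\'e inequality (Proposition \ref{prop:inf-n-PI}) is established via the machinery of Cheeger charts and $n$-weak charts from \cite{teri-syl24, teri-toni-enrico}, applied to the branched cover $F=\minv f\circ f$ restricted off $F(B_f)$, where it is locally quasiconformal. Crucially, this result is then an \emph{input} to the hard direction of (2): the inequality $\Mod_n\minv f(\Gamma)\le K_IK_O\Mod_n\Gamma$ is obtained from Theorem \ref{thm:inv QC}, which proves by induction on degree that $\minv f\inv\in N^{1,n}_{loc}(\Omega_f,\R^n)$ with the correct distortion bound, and the induction step relies on Proposition \ref{prop:inv QC}, which in turn needs the identification of Gigli's cotangent module with the rectifiable cotangent bundle -- and that identification requires the infinitesimal $n$-PI. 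You propose to run the logic in reverse: prove (2) by direct admissible-density arguments, then deduce (1). Both steps fail.

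First, the direct density argument for $\Mod_n\minv f(\Gamma)\le K_O\Mod_n\Gamma$ does not close. With $\tilde\rho(\tilde y)=\frac{\rho(y)}{d}\big(\sum_{x\in f\inv(y)}\|Df(x)\|^2\big)^{1/2}$, the only available computational tool for $\int_{\Omega_f}\tilde\rho^n\,d\Ha^n$ is the area formula of Proposition \ref{prop:area-formula}, which gives $\int_{f(\Omega)}\tilde\rho^n\circ\minv f\cdot\J\minv f\,dy$. By \eqref{eq:jacob-vs-H}, $\J\minv f\ge\big(\sum_x\|Df(x)\|^{-2}\big)^{n/2}$, so the integrand contains the product $\big(\sum_x\|Df(x)\|^2\big)^{n/2}\big(\sum_x\|Df(x)\|^{-2}\big)^{n/2}$. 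This product has no upper bound over the fiber $f\inv(y)$: the singular values of $Df$ at distinct preimages of $y$ are completely unrelated, and the arithmetic--harmonic inequality gives only a lower bound $\ge d^n$. The single-sheet ($d=1$) computation does not survive the summation over sheets, and this is precisely why the paper must instead establish that the global inverse $\minv f\inv:\Omega_f\to\R^n$ is itself analytically quasiconformal -- a statement about a map defined on a non-Euclidean domain, hence requiring the induction/decomposition machinery.

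Second, the route from (2) to the infinitesimal $n$-PI via ``$n$-Loewner transfer in the Heinonen--Koskela framework'' is blocked by exactly the issue the paper flags in the Remark after Proposition \ref{prop:ball-meas-upper-bound}: only \emph{upper} Ahlfors $n$-regularity is established. Semmes's theorem, which would give the Loewner property (and hence a Poincar\'e inequality) for $\Omega_f$, requires a lower volume bound that the paper does not have. The paper's choice of the weaker \emph{infinitesimal} Poincar\'e inequality, proved by entirely different (non-quantitative, chart-based) means, is precisely a workaround for the absence of the lower Ahlfors bound. Your appeal to Loewner transfer presupposes regularity data that is not available.

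The parts of your proposal that do track the paper are: the curve-lift structure and the identity $|\tilde\gamma'|_{\mathcal A}=(\sum_i|\tilde\gamma_i'|^2)^{1/2}$ (Proposition \ref{prop:lifts} and Theorem \ref{thm:wug}), the proof of the \emph{easy} direction $\frac{1}{K_IK_O}\Mod_n\Gamma\le\Mod_n\minv f(\Gamma)$ by pushing an admissible density back to $f(\Omega)$ via the curve-wise upper gradient estimate (this is essentially Proposition \ref{prop:QC}), the use of the area formula for upper Ahlfors regularity (Proposition \ref{prop:ball-meas-upper-bound}), and the derivation of $n$-rectifiability from the Sobolev/Lusin-(N) structure. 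These are in line with the paper. The gaps are concentrated in the hard modulus direction and the Poincar\'e deduction.
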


We refer to \eqref{eq:inf-n-PI} in Section \ref{sec:Omega_f-prop} for the definition of infinitesimal Poincar\'e inequality and further commentary. We mention here that the infinitesimal Poincar\'e inequality is not a quantitative condition.

\subsection{Main ideas and further discussion}
Theorem \ref{thm:sob-pull-back} follows by an approximation argument from its Lipschitz counterpart, Theorem \ref{thm:lip-pullback-flat-form}, whose proof uses induction, decomposition of multi-valued maps (cf. \cite[Proposition 1.6]{del11}), and a somewhat delicate approximation scheme. We outline the main idea.

In order to apply induction in the proof of Theorem \ref{thm:lip-pullback-flat-form}, we will need to approximate a given multi-valued Lipschitz map by maps which are locally decomposable. A given map $f:U\to \mathcal A_d(\R^n)$ fails to be locally decomposable on the set $F:=f\inv(\diag\mathcal A_d(\R^n))$. The restriction of $f$ to $F$ agrees with the map $d\bb{b(f)}$, where $b:\mathcal A_d(\R^n)\to \R^n$ is the barycenter map
\begin{align}\label{eq:barycenter}
b(\bb{x_1,\ldots,x_d})=\frac{x_1+\cdots+x_d}{d},
\end{align}
which is $\frac 1{\sqrt d}$-Lipschitz, see Corollary \ref{cor:barycenter}. The equality $f|_F=d\bb{b(f)}|_F$ implies that the differentials of the two functions agree almost everywhere on $F$. However, since $F$ is closed and not open, we need an approximation argument for the induction step to work. Our approximation scheme consists of a careful interpolation between $f$ and $d\bb{b(f)}$ in a neighbourhood of $F$.

The main step in the proof of Theorem \ref{thm:QR-curve} are the Sobolev estimates, which we obtain in Theorem \ref{thm:wug}. Our argument here uses Vitali's covering theorem, which is valid in high generality, and thus Theorem \ref{thm:wug} probably holds e.g. in the setting of \cite{onn-raj09}.

The proof of Theorem \ref{thm:geom-QC} employs a similar induction argument as in Theorem \ref{thm:lip-pullback-flat-form} together with some metric quasiconformal theory. We remark here that a large part of metric quasiconformal theory is based on the Ahlfors regularity of the spaces under consideration. Indeed, in our setting this property would imply the Loewner property of the image set $\Omega_f:=\minv f(f(\Omega)$ by a (deep) result of Semmes \cite{sem96}, which in turn implies the equivalence of the various notions of quasiconformality in the metric setting \cite{hei98}. In this paper, we establish the upper Ahlfors $n$-regularity of $\Omega_f$ (Proposition \ref{prop:ball-meas-upper-bound}), and an infinitesimal Poincar\'e inequality. It would be interesting to analyze the geometry of $\Omega_f$ further, in particular whether $\Omega_f$ admits local bi-Lipschitz parametrizations. By Almgren's bi-Lipschitz embedding $\mathcal A_d(\R^n)\hookrightarrow \R^N$, $\Omega_f$ can be regarded as a subset of Euclidean space, and the existence of bi-Lipschitz parametrizations is linked with the existence of Cartan--Whitney presentations in a suitable Sobolev class  \cite[[Theorem 1.2]{hei11}.

\subsubsection*{Acknowledgements} The research in this manuscript was supported by Research Council of Finland grant no. 355122. The author thanks Pekka Pankka, Ilmari Kangasniemi and Toni Ikonen for useful discussions on the manuscript.

\section{Preliminaries}

\subsection{Branched covers and quasiregular maps}\label{sec:branched-covers} For the notion of local degree and other topological preliminaries we refer the reader to \cite[Chapter I.4]{ric93}. In this paper we call a continuous open discrete map a \emph{branched cover}. A precompact open set $U\subset X$ is a normal neighbourhood of $f$ if $\partial f(U)=f(\partial U)$. A branched cover $f:X\to Y$ between topological $n$-manifolds has a well-defined local index $\iota(f,x)$ for every $x\in X$, satisfying
\begin{align*}
    \deg(f,U,y)=\sum_{x\in f\inv(y)\cap U}\iota(f,x).
\end{align*}
More generally, for any precompact open $U\subset X$, the map
\begin{align*}
f(U)\setminus f(\partial U)\to \Z,\quad y\mapsto \sum_{x\in f\inv(y)}\iota(f,x)
\end{align*}
is constant on each connected component of $f(U)\setminus f(\partial U)$. Moreover, $f$ is a local homeomorphism in some neighbourhood of a given point $x\in X$ if and only if $\iota(f,x)=\pm 1$. The branch set 
\[
B_f=\{x\in X: |\iota(f,x)|>1\}
\]
is a closed set of topological dimension at most $n-2$ \cite{vai66}. It follows that branched covers are either sense preserving ($\iota(f,\cdot)\ge 1$ everywhere) or sense reversing ($\iota(f,\cdot)\le -1$ everywhere). If $f:X\to Y$ is a proper branched cover of degree finite $d>0$, then
\begin{align}\label{eq:constant-deg}
\sum_{x\in f\inv(y)}\iota(f,x)=d\quad\textrm{for all }y\in f(X).
\end{align}
\begin{definition}\label{def:multi-valued-inv}
Let $f:X\to Y$ be a proper branched cover of finite degree $d>0$ between topological $n$-manifolds. The multi-valued inverse $\minv f:f(X)\to \mathcal A_d(X)$ of $f$ is defined by
\begin{align*}
\minv f(y)=\sum_{x\in f\inv(y)}\iota(f,x)\bb x    
\end{align*}
\end{definition}
It follows from \eqref{eq:constant-deg} that $\minv f$ is well-defined.

\medskip\noindent Recall that a Sobolev map $f\in W^{1,n}_{loc}(\Omega,\R^n)$ ($\Omega\subset\R^n$ open, $n\ge 2$) is called quasiregular, if there exists $K$ such that \eqref{eq:QR} holds. See \cite{ric93} for an excellent introduction to quasiregular maps. The least constant $K$ in \eqref{eq:QR} is called the outer distortion and denoted $K_O(f)$. The inner distortion $K_I(f)$ is the smallest constant $K$ such that 
\[
J_f \le K\min_{v\in S^{n-1}}\{ \|Df(v)\|^n\}\ \textrm{ almost everywhere}.
\]
If $f$ is quasiconformal, i.e. a quasiregular homeomorphism, then the inverse $f\inv $ is quasiconformal and $K_O(f\inv)=K_I(f)$, $K_I(f\inv)=K_O(f)$. We will make use of this fact without further mention in the sequel. 

By the seminal result of Reshetnyak \cite{res67}, non-constant quasiregular maps are continuous, open and discrete. In particular they are sense-preserving and $J_f\in (0,\infty)$ almost everywhere. 

\subsection{Space of unordered tuples}\label{sec:almgren-space} Let $X$ be a metric space and $d\ge 1$ a natural number. The Almgren space -- or symmetric product -- $\mathcal A_d(X)$ is the space $X^d/S_d$ equipped with the metric 
\begin{align}\label{eq:almgren-metric}
    d_\mathcal A(\bar x,\bar y)=\min_{\sigma\in S_d}\Big(\sum_j^dd(x_j,y_{\sigma(j)})^2\Big)^{1/2},
\end{align}
for $\bar x =\bb{x_1,\ldots,x_d},\ \bar y=\bb{y_1,\ldots,y_d}$. Here, the action by isometries on $X^d$ of the symmetric group $S_d$ is given by $\sigma\cdot(x_1,\ldots,x_d)=(x_{\sigma\inv(1)},\ldots,x_{\sigma\inv(d)})$. The space $\mathcal A_d(X)$ can alternatively be described as a subset of the Wasserstein space $\mathcal P_2(X)$, consisting of probability measures $\mu\in \mathcal P_2(X)$ such that $\#\spt(\mu)\le d$ and $d\cdot\mu(\{x\})\in\N$ for all $x\in X$ (note however that the metric inherited from $\mathcal P_2(X)$ differs from \eqref{eq:almgren-metric} by a factor of $1/\sqrt d$). Following \cite{del11}, we denote elements of $\mathcal{A}_d(X)$ either by $\bb{x_1,\ldots,x_d}$ or by $\displaystyle \sum_j^ma_j\bb{x_j}$, where the numbers $a_j>0$ satisfy $\sum_j^ma_j=d$.

\subsubsection*{Multi-valued differentiability} The following Rademacher's theorem for multi-valued Lipschitz maps is from \cite[Theorem 1.13]{del11}. In the statement $m,n,d\ge 1$ are natural numbers and $U\subset \R^m$ an open set.

\begin{theorem}\label{thm:multi-valued-rademacher}
Let $f:U\to \mathcal A_d(\R^n)$ be locally Lipschitz. Then $f$ is differentiable for almost every $x_0\in U$ in the following sense: there are linear maps $L_1,\ldots, L_d:\R^m\to \R^n$ so that 
\begin{align*}
    T_{x_0}f(x):=\bb{ f_1(x_0)+L_1(x-x_0),\ldots, f_d(x_0)+L_d(x-x_0) }
\end{align*}
satisfies $T_{x_0}f(x_0)=\bb{f_1(x_0),\ldots,f_d(x_0)}=f(x_0)$ and
\begin{itemize}
    \item[(i)] $\displaystyle \lim_{x\to x_0}\frac{d_{\mathcal A_d}(f(x),T_{x_0}f(x))}{|x-x_0|}=0$, and
    \item[(ii)] $L_i=L_j$ if $f_i(x_0)=f_j(x_0)$.
\end{itemize}
\end{theorem}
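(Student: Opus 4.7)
The plan is to proceed by induction on $d$, with base case $d = 1$ reducing to the classical Rademacher theorem via $\mathcal A_1(\R^n) \cong \R^n$. For the inductive step, assuming the result for all $d' < d$, I would partition $U$ into the open set
\[
V := \{x_0 \in U : f(x_0) \notin \diag \mathcal A_d(\R^n)\}
\]
and its closed complement $F := f^{-1}(\diag \mathcal A_d(\R^n))$, where $\diag \mathcal A_d(\R^n) := \{d\bb{y} : y \in \R^n\}$. Differentiability almost everywhere is then established on $V$ and $F$ separately.

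On $V$, near any $x_0 \in V$ I would write $f(x_0) = \sum_{i=1}^{N} k_i \bb{y_i}$ with the $y_i \in \R^n$ distinct, $N \ge 2$ and each $k_i < d$. Disjoint neighbourhoods $U_i \ni y_i$ in $\R^n$ induce a canonical local isometry
\[
\mathcal A_{k_1}(U_1) \times \cdots \times \mathcal A_{k_N}(U_N) \hookrightarrow \mathcal A_d(\R^n)
\]
onto a neighbourhood of $f(x_0)$, so by continuity of $f$ the map decomposes on some neighbourhood $W \ni x_0$ as $f|_W = (g_1, \ldots, g_N)$ with each $g_i : W \to \mathcal A_{k_i}(\R^n)$ Lipschitz. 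Since $k_i < d$, the inductive hypothesis applies to each $g_i$ on a conull subset of $W$; assembling the resulting linear maps block by block yields the tangent map at almost every $x_0 \in V$, and property (ii) is automatic because its hypothesis $f_i(x_0) = f_j(x_0)$ forces $i,j$ to belong to a single block $g_i$.

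On $F$, condition (ii) forces $L_1 = \cdots = L_d =: L$, so the tangent map must take the reduced form $T_{x_0}f(x) = d\bb{y_0 + L(x - x_0)}$ with $y_0 := b(f)(x_0)$. The natural candidate is $L := D(b \circ f)(x_0)$, which exists at almost every $x_0 \in U$ (in particular almost every $x_0 \in F$) by applying the classical Rademacher theorem to the Lipschitz map $b \circ f : U \to \R^n$ (recall $b$ is $\tfrac{1}{\sqrt d}$-Lipschitz by Lemma \ref{lem:barycenter}).

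The main obstacle is verifying property (i) at points $x_0 \in F$, since the Lipschitz bound alone only gives the trivial $O(|x - x_0|)$ estimate. I would close this gap via Lebesgue density. At every density point $x_0$ of $F$, the estimate $|B(x_0,r) \setminus F| \le \varepsilon(r) |B(x_0,r)|$ with $\varepsilon(r) \to 0$, combined with the inclusion $B(x,\dist(x,F)) \subset B(x_0, 2|x-x_0|) \setminus F$ (valid since $x_0 \in F$ gives $\dist(x,F) \le |x-x_0|$), yields $\dist(x,F)^m \le \varepsilon(2|x-x_0|)(2|x-x_0|)^m$, hence $\dist(x,F) = o(|x-x_0|)$. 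Picking $x' \in F$ with $|x - x'| = o(|x - x_0|)$, one has $f(x') = d\bb{b(f)(x')}$ exactly; combining this with differentiability of $b \circ f$ at $x_0$ and the Lipschitz bound $d_{\mathcal A_d}(f(x), f(x')) \le \Lip(f)|x - x'| = o(|x - x_0|)$ yields (i). Since $V \cup F = U$, this completes the induction.
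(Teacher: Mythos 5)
The paper does not prove this theorem itself; it is imported from De Lellis--Spadaro \cite[Theorem~1.13]{del11}, so there is no ``paper's own proof'' to compare against. Evaluated on its own terms, your proof is correct and in fact follows the same two-pronged inductive strategy used in the reference: split $U$ into the closed set $F=f^{-1}(\diag\mathcal A_d(\R^n))$ and its open complement $V$; on $V$, decompose $f$ locally into lower-order blocks (this is precisely \cite[Proposition~1.6]{del11}, and the paper itself uses the same decomposition in the proof of Theorem~\ref{thm:lip-pullback-flat-form}) and invoke the induction hypothesis; on $F$, pass to the scalar-valued map $b\circ f$ and upgrade differentiability of $b\circ f$ to differentiability of $f$.

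The one step worth flagging as the real content is exactly the one you identified: on $F$, differentiability of $b\circ f$ at $x_0$ alone does not give (i), since $f(x)$ need not equal $d\bb{b(f)(x)}$ off $F$. Your Lebesgue-density fix is correct: at a density point $x_0\in F$, the inclusion $B(x,\dist(x,F))\subset B(x_0,2|x-x_0|)\setminus F$ forces $\dist(x,F)=o(|x-x_0|)$, and the triangle inequality through the nearest point $x'\in F$ (where $f(x')=d\bb{b(f)(x')}$ holds exactly) combined with $\Lip(f)|x-x'|=o(|x-x_0|)$ and differentiability of $b\circ f$ gives (i). It might be worth spelling out that the residual term $\sqrt d\,|b(f)(x')-b(f)(x_0)-L(x'-x_0)|$ is $o(|x'-x_0|)$ and $|x'-x_0|=O(|x-x_0|)$, so the chain of estimates genuinely closes. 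Also note (ii) on $V$: the blocks take values in pairwise disjoint balls $U_i$ on the whole neighbourhood $W$, so $f_i(x_1)=f_j(x_1)$ at any $x_1\in W$ forces $i,j$ into the same block, where the inductive (ii) applies --- this is slightly stronger than what you stated (you only addressed the point $x_0$ itself) but follows for free and is what you actually need, since the induction is invoked at a.e.\ $x_1\in W$, not just at the basepoint of the decomposition. With those minor elaborations the argument is complete.
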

The differential $Df:=\bb{L_1,\ldots,L_d}$ can be regarded as an element in $\mathcal A_d(\R^{n\times n})$, but note that this does not determine it unambiguously, cf. \cite[Remark 1.11]{del11}. If $f_i:U\to \mathcal A_{d_i}(\R^n)$, $i=0,1$, are differentiable at $x\in U$ and $d_0+d_1=d$, then the map $f:=\bb{f_0,f_1}:U\to \mathcal A_d(\R^n)$ is differentiable at $x\in U$, and $D_xf=\bb{D_xf_0,D_xf_1}$. 

\subsubsection*{Multi-valued Sobolev maps} We refer the reader to \cite[Chapter 2.2 and Chapter 4]{del11} for the theory of Sobolev maps $W^{1,p}_{loc}(U,\mathcal A_d(\R^n))$ and mention here that Sobolev maps are approximately differentiable almost everywhere \cite[Corollary 2.7]{del11}. This follows from the following Lusin-type approximations by Lipschitz maps \cite[Proposition 2.5]{del11}.

\begin{proposition}\label{prop:lusin-approx}
    Let $f\in W^{1,p}(U,\mathcal A_d(\R^n))$. For every $\lambda>0$ there exists a $\lambda$-Lipschitz map $f_\lambda:U\to \mathcal A_d(\R^n)$ such that 
\begin{align}\label{eq:lusin-est}
\lambda^p\leb^m(\{f\ne f_\lambda\})\le C\int_{\{f\ne f_\lambda\}}(d_\mathcal A(f,d\bb 0)^p+|Df|^p)\ud x.
\end{align}
\end{proposition}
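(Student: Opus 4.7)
The plan is to adapt the classical Lusin-type approximation of Sobolev maps by maximal-function truncation and extension: identify a large good set $E_\lambda\subset U$ on which $f$ is $C\lambda$-Lipschitz, and then extend that restriction to a global $\mathcal{A}_d(\R^n)$-valued Lipschitz map.

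First I would set $g := d_\mathcal{A}(f, d\bb 0) + |Df| \in L^p(U)$ and define $E_\lambda := \{x \in U : Mg(x) \le c\lambda\}$, where $M$ is the Hardy--Littlewood maximal operator and $c>0$ is a suitably small absolute constant. The main analytic step is the pointwise oscillation estimate
\[ d_\mathcal{A}(f(x), f(y)) \le C\bigl(M|Df|(x) + M|Df|(y)\bigr)|x - y| \quad \text{for a.e. } x, y \in U, \]
which for $x, y \in E_\lambda$ yields that $f|_{E_\lambda}$ is $C\lambda$-Lipschitz. This estimate follows from the standard chain-of-balls / Poincar\'e argument applied to the $1$-Lipschitz scalar slices $x \mapsto d_\mathcal{A}(f(x), z)$, whose weak upper gradient is dominated by $|Df|$ (the latter being an easy consequence of the definition of $|Df|$ and the $1$-Lipschitz property of $d_\mathcal{A}(\cdot,z)$).

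Second, to extend $f|_{E_\lambda}$ to a $C\lambda$-Lipschitz map $f_\lambda : U \to \mathcal{A}_d(\R^n)$ I would invoke Almgren's bi-Lipschitz embedding $\xi : \mathcal{A}_d(\R^n) \hookrightarrow \R^N$ together with the accompanying Lipschitz retraction $\rho : \R^N \to \xi(\mathcal{A}_d(\R^n))$ (cf.\ \cite[Theorem 2.1]{del11}). Extending $\xi \circ f|_{E_\lambda}$ coordinate-wise by McShane's theorem to a $C\lambda$-Lipschitz map $\widetilde f : U \to \R^N$ and setting $f_\lambda := \xi^{-1} \circ \rho \circ \widetilde f$ produces the desired extension, with Lipschitz constant controlled by $C\lambda$ times the bi-Lipschitz and retraction constants.

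Finally, the measure estimate follows from the truncated weak-type estimate for $M$: since $\{f \ne f_\lambda\} \subset U \setminus E_\lambda \subset \{Mg > c\lambda\}$, one obtains
\[ \lambda^p \leb^m(\{f \ne f_\lambda\}) \le C \int_{\{g > c\lambda/2\}} g^p\, \ud x, \]
and the stated form of the estimate is recovered by showing that the symmetric difference of $\{g > c\lambda/2\}$ and $\{f \ne f_\lambda\}$ contributes negligibly. This last identification is the main obstacle I anticipate: at density points of $\{f = f_\lambda\}$ one has $|Df| = |Df_\lambda| \le C\lambda$, but bounding the term $d_\mathcal{A}(f, d\bb 0) = d_\mathcal{A}(f_\lambda, d\bb 0)$ there requires either a careful truncation of $f_\lambda$ outside a neighbourhood of $E_\lambda$ or a localisation argument matched to the size of $\lambda$. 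Modulo this bookkeeping, the remaining steps are the classical Lusin approximation transplanted into the metric-valued setting via Almgren's embedding.
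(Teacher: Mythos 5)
Your overall strategy --- maximal-function good set, oscillation/Poincar\'e estimate on $1$-Lipschitz scalar slices, and Almgren embedding $\xi$ with its Lipschitz retraction followed by McShane extension --- is precisely the argument underlying the cited \cite[Proposition 2.5]{del11}. The paper's own proof consists of a one-line reference to that result plus the remark that the cited proof actually produces the right-hand side localized to $\{f\neq f_\lambda\}$; so you are reconstructing the reference rather than contradicting the paper.

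You are also correct that the localization is the one non-routine point, and your proposal does not close it. The truncated weak-type estimate for the maximal function yields
\begin{align*}
\lambda^p\leb^m(\{f\neq f_\lambda\}) \le \lambda^p\leb^m(\{Mg> c\lambda\}) \le C\int_{\{g> c\lambda/2\}} g^p\,\ud x,
\end{align*}
with $g=d_\mathcal A(f,d\bb 0)+|Df|$, and $\{g> c\lambda/2\}$ is genuinely not contained in $\{f\neq f_\lambda\}$: it can overlap the good set $E_\lambda=\{Mg\le c\lambda\}$ (where a.e.\ $c\lambda/2< g\le c\lambda$), and the McShane/retraction extension gives no control over $d_\mathcal A(f_\lambda,d\bb0)$ at accidental coincidence points outside $E_\lambda$, so the density-point argument $|Df|=|Df_\lambda|\le\lambda$ on $\{f=f_\lambda\}$ does not by itself dominate $g$ there. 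To actually finish along your route you would either need to normalize the extension (e.g.\ retract onto a ball $\overline B_{\mathcal A_d}(d\bb 0,C\lambda)$ as well, using that $g\le c\lambda$ a.e.\ on $E_\lambda$) so that $g\lesssim\lambda$ a.e.\ on $\{f=f_\lambda\}$ and then absorb the residual term, or simply verify this in the De Lellis--Spadaro proof directly as the paper suggests.

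It is worth noting, for context, that the localized form is a strengthening that is not essential to the way the proposition is used later (the proof of Theorem~\ref{thm:sob-pull-back}): there one only needs that $\lambda^k\leb^m(\{f\neq f_\lambda\})\to 0$ and $\int_{\{f\neq f_\lambda\}}(|Df|^k+|f|^k)\to 0$ as $\lambda\to\infty$, and both of these already follow from the unlocalized truncated weak-type estimate together with absolute continuity of the integral. So the gap you flag is real and should be filled if one wants the proposition exactly as stated, but it is not fatal to the downstream argument.
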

\begin{proof}
The claim is stated with $\{f\ne f_\lambda\}$ replaced by $U$ on the right hand side of \eqref{eq:lusin-est} in \cite[Proposition 2.5]{del11}, but the proof and references therein yield the inequality stated here.
\end{proof}

\subsubsection*{Metric Jacobian} Let $s$ be a seminorm on $\R^n$. The Jacobian $\J(s)$ is the unique number satisfying $\Ha^n_s=\J(s)\leb^n$, where $\Ha^n_s$ is zero if $s$ is degenerate, and the Hausdorff $n$-measure with respect to the metric induced by $s$ otherwise. %We refer to \cite{kir94} where this notion, along with the metric differential, was introduced in order to prove an area formula for maps into metric spaces. 
If $f:U\to \mathcal A_d(\R^n)$ is (approximately) differentiable at $x\in U$, we denote 
\begin{align*}
\J f(x)=\J(\operatorname{md}_xf),
\end{align*}
where $\operatorname{md}_xf$ is the seminorm on $\R^n$ given by $\operatorname{md}_xf(v)=|D_xf(v)|$ for $v\in \R^n$.\footnote{Note that this is well-defined since the norm of $D_xf(v)$ is independent of the permutation of order of the elements in $D_xf=(D_xf_1,\ldots,D_xf_d)$.} We refer to \cite{kir94} where the metric differential and Jacobian were introduced in order to prove an area formula for maps into metric spaces (see also \cite{kar07}). We record the following identity for future use.

\begin{lemma}\label{lem:metric-jacobian}
Suppose  $L=\bb{L_1,\ldots,L_d}:\R^n\to \mathcal A_d(\R^n)$, where $L_1,\ldots, L_d:\R^n\to \R^n$ are linear maps, and let $s$ be the seminorm given by $s(v)=\Big(\sum_j^d|L_j(v)|^2\Big)^{1/2}$ for $v\in \R^n$. Then 
\[
\J(s)=\det(L_1^TL_1+\cdots+L_d^TL_d)^{1/2}.
\]
In particular, if $f:U\to \mathcal A_d(\R^n)$ is differentiable a.e., then 
\begin{align*}
\J f=\det\Big(\sum_j^dDf_j^TDf_j\Big).
\end{align*}
\end{lemma}
\begin{proof}
If $f$ is differentiable almost everywhere, then it is metrically differentiable almost everywhere and 
\[
\md_xf(v)=\Big(\sum_j^d|Df_j(v)|^2\Big)^{1/2},\quad v\in \R^m
\]
for a.e. $x\in U$. Thus the second claim readily follows from the first.

To prove the first, it suffices to consider the linear map $L=(L_1,\ldots,L_d):\R^n\to (\R^n)^d$. By the area formula on Euclidean spaces, we have that $J_{n}(L)L_\ast\leb^n=\Ha^n|_{\operatorname{Im}(L)}$, where  
\begin{align*}
J_n(L)=|\det(L^TL)|^{1/2}=|\det(L_1^TL_1+\cdots+L_d^TL_d)|^{1/2}
\end{align*}
Now $J_n(L)\ne 0$ if and only if $L$ has full rank, in which case $s$ is a genuine norm. Note that $L:(\R^n,s)\to (\operatorname{Im}(L),\|\cdot\|_{Eucl})$ is an isometry, and thus $L_\ast\Ha^n_s=\Ha^n|_{\operatorname{Im}(L)}$. Since $L:\R^n\to \operatorname{Im}(L)$ is bijective, this yields $\Ha^n_s=J_n(L)\leb^n$.
\end{proof}

\section{Multivalued inverse}

\subsection{Basic properties for branched covers} Throughout this subsection, $f:X\to Y$ is a proper branched cover of degree $d$ between metric spaces $X,Y$ homeomorphic to topological $n$-manifolds, $n\ge 2$. We start with a simple lemma. 
\begin{lemma}\label{lem:homeo}
The generalized inverse $\minv f:f(X)\to \mathcal A_d(X)$ is a homeomorphism onto its image.
\end{lemma}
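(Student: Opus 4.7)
The plan is to verify the three properties---injectivity, continuity, and continuity of the inverse---in turn, leaning on the local degree formalism recalled at the start of Section \ref{sec:branched-covers}.

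\emph{Injectivity} is immediate from the definition of $\minv f$: if $\minv f(y_1)=\minv f(y_2)=T$ and $x$ is any point in the (nonempty) support of $T$, then $f(x)=y_1$ and $f(x)=y_2$, so $y_1=y_2$. The interesting content is therefore topological.

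For \emph{continuity at $y\in f(X)$}, the plan is to separate the preimage $f\inv(y)=\{x_1,\dots,x_m\}$ (with local indices $\iota_j:=\iota(f,x_j)$) into pairwise disjoint normal neighborhoods $U_j\ni x_j$ of prescribed small diameter $\varepsilon$. Local degree theory gives that, for $y'$ in the component of $f(U_j)\setminus f(\partial U_j)$ containing $y$,
\[
\sum_{x\in f\inv(y')\cap U_j}\iota(f,x)=\iota_j.
\]
Combining this over $j$ with the global identity \eqref{eq:constant-deg} forces, for $y'$ in a small enough neighborhood $V$ of $y$, the inclusion $f\inv(y')\subset\bigcup_j U_j$ (here properness of $f$ is used implicitly, since otherwise this localization could fail even when the index sums are correct). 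Matching the $\iota_j$-mass of $\minv f(y')$ inside $U_j$ with the $\iota_j$ copies of $x_j$ appearing in $\minv f(y)$ and applying the definition \eqref{eq:almgren-metric} then gives $d_\mathcal A(\minv f(y),\minv f(y'))\le \sqrt{d}\,\varepsilon$. I expect this step---arranging a valid coupling in $\mathcal A_d(X)$ from the local degree bookkeeping---to be the main (though routine) obstacle.

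For \emph{continuity of the inverse}, suppose $\minv f(y_k)\to \minv f(y)$ in $\mathcal A_d(X)$. By definition of $d_\mathcal A$ we may, after passing to a subsequence and reordering, write $\minv f(y_k)=\bb{x_1^k,\dots,x_d^k}$ and $\minv f(y)=\bb{x_1,\dots,x_d}$ with $x_j^k\to x_j$ for each $j$. Since every coordinate $x_j^k$ satisfies $f(x_j^k)=y_k$ and $f(x_j)=y$, continuity of $f$ gives $y_k\to y$. As the argument applies to every subsequence, the full sequence converges, which concludes the proof and shows that $\minv f$ is a homeomorphism onto its image.
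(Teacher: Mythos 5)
Your plan is correct. For continuity you follow the paper's route essentially verbatim: choose disjoint normal neighbourhoods $U_j=U_f(x_j,\delta)$ of small diameter, invoke the index identity $\iota(f,x_j)=\sum_{x'\in f\inv(y')\cap U_j}\iota(f,x')$ to produce a coupling pairing $\iota(f,x_j)$ copies of $x_j$ with the preimages of $y'$ in $U_j$, and sum up to obtain $d_\mathcal A(\minv f(y),\minv f(y'))\le\sqrt d\,\varepsilon$; this is exactly the paper's estimate. The one step you leave tacit---that normal neighbourhoods of prescribed small diameter exist, i.e.\ $\lim_{r\to 0}\diam U_f(x,r)=0$---the paper actually proves, by intersecting the nested compact sets $\overline U_f(x,2^{-j})$ to get a singleton; you should record this. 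For continuity of $\minv f\inv$ your argument is a genuine (and arguably cleaner) variant: you pass to a subsequence along which the optimal permutation in \eqref{eq:almgren-metric} is fixed, extract coordinatewise convergence $x_j^k\to x_j$, and conclude $y_k=f(x_j^k)\to f(x_j)=y$ from continuity of $f$, upgrading to the full sequence by the subsequence principle. The paper instead reuses the normal neighbourhoods $U_x$ around $f\inv(y)$, shows that $f\inv(y_k)\cap U_x\ne\varnothing$ for large $k$, and reads off $y_k\to y$ from a preimage converging to $x$. Both yield the same conclusion; your route uses only the finiteness of $S_d$ and the definition of the metric, whereas the paper leans once more on the normal-neighbourhood machinery.
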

In the proof we use the following notation. Let $U_f(x,r)$ be the connected component of $f\inv B(f(x),r)$ containing $x$. For each $y\in f(X)$, there exists $r_0$ such that $U_f(x,r)$ is a normal neighbourhood of $x$ for all $x\in f\inv(y)$ and $r\le r_0$. In particular the closure of $U_f(x,r)$ is compact and  $f(\partial U_f(x,r))=\partial B(f(x),r)$. 

\begin{proof}
It is clear that $\minv f$ is injective. To prove continuity, observe that 
\begin{align*}
\lim_{r\to 0}\diam U_f(x,r)=0
\end{align*}
for all $x\in X$. Indeed, 
\begin{align*}
\bigcap_j\overline U_f(x,2^{-j})=\overline U_f(x,r_0)\cap f\inv\Big(\bigcap_j\overline B(f(x),2^{-j})\Big)=\{x\},
\end{align*}
and since the sets $\overline U_f(x,2^{-j})$ are compact and nested for large enough $j$, it follows that the diameters must tend to zero. (See also \cite[Chapter I, Lemma 4.9]{ric93}.)

Fix $y\in f(X)$ and $\varepsilon>0$. Let $\delta>0$ be small enough such that $\diam U_f(x,\delta)<\varepsilon$ for all $x\in f\inv(y)$ (note that $\#f\inv(y)\le d$). For $y'\in B(y,\delta)$ we may group the preimages and obtain 
\[
f\inv(y')=\sum_{x\in f\inv(y)}\sum_{x'\in f\inv(y')\cap U_x}\iota(f,x')\delta_{x'}
\]
(where $U_x:=U_f(x,\delta)$). Moreover $\iota(f,x)=\sum_{x'\in f\inv(y')\cap U_x}\iota(f,x')$. Consequently
\begin{align*}
d_{\mathcal A_d}(f\inv(y),f\inv(y'))^2 &\le \sum_{x\in f\inv(y)}\sum_{x'\in f\inv(y')\cap U_x}\iota(f,x')d(x,x')^2\\
&\le \sum_{x\in f\inv(y)}\iota(f,x)\varepsilon^2=d\varepsilon^2.
\end{align*}
This proves the claimed continuity. We complete the proof by showing $\minv f\inv:\minv f(f(X))\to f(X)$ is continuous. Suppose $z_k=\minv f(y_k)\to z=\minv f(y)$. For small $r >0$ we have $f\inv B(y,r)=\bigcup_{x\in f\inv(y)}U_f(x,r)$ where each $U_f(x,r)=:U_x$ is a normal neighbourhood of $x$. For each $k$ write $f\inv(y_k)=\bigcup_{x\in f\inv(y)}(f\inv(y_k)\cap U_x)$. Since $\minv f(y_n)\to \minv f(y)$, for large $k$ we have that $f\inv(y_k)\cap U_x\ne \varnothing$ for all $x\in f\inv(y)$ and 
\[
d_\mathcal A(z_k,z)^2=\sum_{x\in f\inv(y)}\sum_{x_k\in f\inv(y_k)\cap U_x}d(x_k,x)^2\stackrel{k\to\infty}{\longrightarrow} 0.
\]
From this it follows that for each $x\in f\inv(y)$ there exists $x_k\in f\inv(y_k)$ with $x_k\to x$, which in turn implies $y_k=f(x_k)\to f(x)=y$ by the continuity of $f$. This proves that $\minv f\inv$ is continuous. 
\end{proof}

The next lemma relates the singular set of $\mathcal A_d(X)$ and the branch set of $f$. Here, the singular set $\operatorname{sing}\mathcal A_d(X)$ is the union of the sets
\[
\operatorname{sing}_k\mathcal A_d(X):=\{ \bb{x_1,\ldots,x_d} : \ x_1=\cdots =x_k\},\quad k=2,\ldots,d.
\]
We also denote $\operatorname{sing}_d\mathcal A_d(X)=\diag\mathcal A_d(X)$.

\begin{lemma}We have that 
\[
\minv f\inv(\operatorname{sing}(\mathcal A_d(X)))=f(B_f).
\]
\end{lemma}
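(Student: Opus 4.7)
The plan is to read the equality off from the definitions. By construction $\operatorname{sing}\mathcal{A}_d(X)$ is the set of unordered $d$-tuples having at least one repeated entry, while
\[
\minv f(y)=\sum_{x\in f\inv(y)}\iota(f,x)\bb{x}
\]
lists the \emph{distinct} preimages $x_1,\ldots,x_m\in f\inv(y)$ with positive multiplicities $\iota(f,x_j)\ge 1$ summing to $d$ (we are in the sense-preserving case, since $d>0$ and the local indices are equi-signed on a branched cover). The central observation is that, because the $x_j$ are pairwise distinct points of $X$, two entries in the underlying $d$-tuple of $\minv f(y)$ can coincide if and only if some single $x_j$ appears with multiplicity $\iota(f,x_j)\ge 2$; and this is exactly the defining condition $x_j\in B_f$.

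Given this observation, both inclusions are immediate. For $f(B_f)\subset \minv f\inv(\operatorname{sing}\mathcal{A}_d(X))$, pick $y=f(x_0)$ with $x_0\in B_f$, so that $\iota(f,x_0)\ge 2$; then $\bb{x_0}$ is repeated at least twice in $\minv f(y)$ and hence $\minv f(y)\in \operatorname{sing}_{\iota(f,x_0)}\mathcal{A}_d(X)\subset \operatorname{sing}\mathcal{A}_d(X)$. Conversely, if $\minv f(y)\in \operatorname{sing}\mathcal{A}_d(X)$, the $d$-tuple has a repeated entry; by the distinctness of the $\bb{x_j}$ the only possible source of a repetition is a single preimage of local index $\ge 2$, so some $x_j\in B_f$ and $y=f(x_j)\in f(B_f)$.

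There is no real obstacle: the statement is essentially a tautology once one separates the two potential sources of multiplicity in $\minv f(y)$, namely multiple preimages (which contribute \emph{distinct} points of $X$ and therefore distinct entries) versus a single preimage with high local index (which contributes a genuinely repeated entry). Only the second possibility gives rise to points of $\operatorname{sing}\mathcal{A}_d(X)$, and this second possibility is precisely the definition of $B_f$.
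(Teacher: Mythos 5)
Your proof is correct and follows essentially the same route as the paper: both rest on the observation that $\minv f(y)\in\operatorname{sing}\mathcal A_d(X)$ iff some $x\in f\inv(y)$ has $\iota(f,x)>1$, which in turn is equivalent to $f\inv(y)\cap B_f\neq\varnothing$, i.e.\ $y\in f(B_f)$. The paper states these equivalences tersely; you spell out more explicitly why distinct preimages cannot produce a repeated entry, but the argument is the same.
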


\begin{proof}
A point $y\in f(X)$ is in $f(B_f)$ if and only if $f\inv(y)\cap B_f\ne \varnothing$. On the other hand, $F(y)\in \operatorname{sing}(\mathcal A_d(X))$ if and only if $\iota(f,x)>1$ for some $x\in f\inv(y)$. These two conditions are equivalent.
\end{proof}

The following proposition will be useful later on. It is based on a theorem of Floyd \cite{flo50}, see also \cite[Proposition 2.6]{lui17}, which states that compact paths admit lifts by a proper branched cover.
\begin{proposition}\label{prop:lifts}
Let $\gamma:[0,1]\to f(X)$ be a continuous path. Then 
\begin{align}\label{eq:lifts}
\minv f\circ\gamma=\sum_{x\in f\inv(\gamma_0)}\sum_j^{\iota(f,x)}\bb{\gamma_x^j},
\end{align}
where $\gamma_x^j$ ($j=1,\ldots,\iota(f,x)$) are the lifts of $\gamma$ by $f$ starting at $x$, i.e. $f\circ\gamma_x^j=\gamma$, $\gamma_x^j(0)=x$.
\end{proposition}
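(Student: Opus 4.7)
The proof hinges on first constructing the lifts $\gamma_x^j$ with the correct count and then verifying the identity pointwise. The plan is as follows.

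\emph{Step 1: Producing $\iota(f,x)$ lifts at each starting point.}
Floyd's theorem (via \cite[Proposition 2.6]{lui17}) provides the existence of continuous lifts of $\gamma$. To pin down the number of lifts starting at a given $x\in f\inv(\gamma(0))$, I would cover the compact set $\gamma([0,1])$ by finitely many open sets $V_1,\dots,V_N$ such that each component of $f\inv(V_i)$ is a normal neighbourhood of a single preimage point, and then argue inductively on subdivisions $0=t_0<t_1<\cdots <t_N=1$ with $\gamma([t_{i-1},t_i])\subset V_i$. On a normal neighbourhood $U$ of a point $x'\in X$ with $\iota(f,x')=k$, the restriction $f|_U$ is proper of degree $k$, and a curve in $f(U)$ starting at $f(x')$ admits exactly $k$ lifts in $U$ starting at $x'$ (one for each ``branch direction''). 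Piecing the local lifts together across the intervals $[t_{i-1},t_i]$ and summing the local degrees along the way yields exactly $\iota(f,x)$ global lifts starting at $x$, and the total count is $\sum_{x\in f\inv(\gamma(0))}\iota(f,x)=d$ by \eqref{eq:constant-deg}.

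\emph{Step 2: Reducing to pointwise equality.}
Denote the right-hand side of \eqref{eq:lifts} by
\[
\Phi(t)=\sum_{x\in f\inv(\gamma(0))}\sum_{j=1}^{\iota(f,x)}\bb{\gamma_x^j(t)}.
\]
Since each $\gamma_x^j$ is continuous and the Almgren metric \eqref{eq:almgren-metric} is dominated by the sum of componentwise distances, $\Phi:[0,1]\to \mathcal A_d(X)$ is continuous, and $\Phi(0)=\sum_x \iota(f,x)\bb{x}=\minv f(\gamma(0))$. Since $f\circ \gamma_x^j=\gamma$, for each fixed $t$ the multiset $\Phi(t)$ is supported on $f\inv(\gamma(t))$, so we may write $\Phi(t)=\sum_{y\in f\inv(\gamma(t))}N_y(t)\bb{y}$ with $N_y(t):=\#\{(x,j): \gamma_x^j(t)=y\}$. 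It then suffices to prove $N_y(t)=\iota(f,y)$ for every $t\in[0,1]$ and every $y\in f\inv(\gamma(t))$.

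\emph{Step 3: Matching lifts from base $0$ with lifts through $y$ at time $t$.}
Applying the construction of Step 1 with $\gamma$ reparametrised to have basepoint $t$ rather than $0$, for each $y\in f\inv(\gamma(t))$ there are exactly $\iota(f,y)$ lifts of $\gamma$ passing through $y$ at time $t$, and the total number of lifts of $\gamma$ is $d$. Since Step 1 produced $d$ lifts $\{\gamma_x^j\}$, the two enumerations must exhaust the same collection of lifts: indeed, uniqueness of lifts on normal neighbourhoods combined with a standard open--closed argument on $[0,1]$ shows that any two lifts of $\gamma$ that agree at a single parameter agree on all of $[0,1]$. Consequently $N_y(t)$ equals the number of lifts of $\gamma$ with value $y$ at time $t$, which is $\iota(f,y)$. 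This gives $\Phi(t)=\minv f(\gamma(t))$, as required.

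\emph{Main obstacle.}
The delicate point is the global matching of the two enumerations of lifts in Step 3: one must rule out ``exotic'' lifts through $y$ at time $t$ that are not globally of the form $\gamma_x^j$, and also the possibility that two of the $\iota(f,y)$ lifts through $y$ get counted as a single $\gamma_x^j$. Both phenomena are excluded by the uniqueness of lifts within normal neighbourhoods and the connectedness of $[0,1]$, but this requires careful bookkeeping at the branch set, where different lifts may coalesce and then separate again.
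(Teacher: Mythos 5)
Your Step 3 relies on the claim that ``any two lifts of $\gamma$ that agree at a single parameter agree on all of $[0,1]$.'' This is false for branched covers, and it is precisely the failure you flag under ``Main obstacle.'' Consider $f(z)=z^2$ on $\mathbb{C}$ and $\gamma$ a path through the origin in the target: the two lifts both pass through $0$ at the same time $t_0$ but are distinct on either side of $t_0$. Thus uniqueness-from-a-single-point fails at branch points, and the open--closed argument you appeal to breaks exactly there. Without this uniqueness, the matching between the $d$ lifts produced in Step 1 (based at $t=0$) and the $d$ lifts based at an arbitrary time $t$ is not guaranteed, so you cannot conclude $N_y(t)=\iota(f,y)$. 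Step 1 has a related softer gap: the claim that ``summing the local degrees along the way yields exactly $\iota(f,x)$ global lifts starting at $x$'' requires a careful bookkeeping of how lifts coalesce and separate across branch points, which is not supplied.

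What you are implicitly trying to prove is the \emph{maximal (total) lifting} theorem for branched covers: a path $\gamma$ admits $d$ lifts $\tilde\gamma_1,\dots,\tilde\gamma_d$ such that $\#\{j:\tilde\gamma_j(t)=x\}=\iota(f,x)$ for every $t$ and every $x\in f^{-1}(\gamma_t)$. The paper's proof does not rederive this; it invokes it directly as a known result, namely \cite[Chapter~II, Corollary~3.4]{ric93} together with Floyd's theorem \cite{flo50} (cf.\ \cite[Proposition 2.6]{lui17}), and the stated identity \eqref{eq:lifts} is then an immediate relabeling of the $\tilde\gamma_j$ by their initial points. The content of that black box is exactly the pointwise multiplicity count you need in Step~3, so an honest version of your argument would either cite it or reproduce its proof, which involves the delicate bookkeeping at the branch set that your proposal acknowledges but does not perform.
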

\begin{proof}
By arguing as in \cite[Chapter II, Corollary 3.4]{ric93} using \cite{flo50} (cf. \cite[Proposition 2.6]{lui17}), we see that $\gamma$ has $d$ total $f$-lifts $\tilde\gamma_1,\ldots,\tilde\gamma_d$ which satisfy $\#\{j:\ \tilde\gamma_j(t)=x\}=\iota(f,x)$ for all $x\in f\inv(\gamma_t)$. Relabeling these as $\gamma_x^j$ for $j=1,\ldots,\iota(f,x)$ and each $x\in f\inv(\gamma_0)$, this implies $\minv f(\gamma_t)=\sum_{x\in f\inv(y)}\sum_j^{\iota(f,x)}\bb{\gamma_x^j(t)}$ for all $t\in [0,1]$, proving \eqref{eq:lifts}. 
\end{proof}

\begin{proposition}\label{prop:hausd-meas-est}
Suppose $s\ge 0$ and $E\subset \minv f(f(X))$. Then
\[
\Ha^s((\minv f\circ f)\inv E)\le d\cdot \Ha^s(E).
\]
\end{proposition}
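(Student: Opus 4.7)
The plan is to lift a cover of $E$ by balls in $\mathcal A_d(X)$ to a cover of $(\minv f\circ f)\inv E$ in $X$, exploiting that the preimage of each ball lies in a union of $d$ balls of the same radius. The whole argument rests on the following geometric observation.

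\textbf{Key lemma.} For any $z=\bb{z_1,\dots,z_d}\in\mathcal A_d(X)$ and any $r>0$,
\[
(\minv f\circ f)\inv B(z,r)\subset\bigcup_{j=1}^d B(z_j,r),
\]
where the left-hand ball is in $\mathcal A_d(X)$ and the right-hand balls are in $X$. To prove this, suppose $x$ lies in the preimage. Then $\minv f(f(x))$ is an unordered $d$-tuple containing $x$ among its entries (by Definition \ref{def:multi-valued-inv}, since $x\in f\inv(f(x))$), and $d_{\mathcal A_d}(\minv f(f(x)),z)<r$. The permutation $\sigma\in S_d$ realizing this infimum pairs the entry $x$ of $\minv f(f(x))$ with some entry $z_{j}$ of $z$, and by definition of the $\ell^2$-type Almgren metric $d(x,z_{j})\le d_{\mathcal A_d}(\minv f(f(x)),z)<r$, so $x\in B(z_j,r)$.

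Given $\delta,\varepsilon>0$, one covers $E$ by closed balls $\{\bar B(z^i,r_i)\}_i$ in $\mathcal A_d(X)$ with $r_i\le \delta/2$ and
\[
\sum_i (2r_i)^s\le\Ha^s_\delta(E)+\varepsilon.
\]
Applying the key lemma to each $\bar B(z^i,r_i)$ produces a cover of $(\minv f\circ f)\inv E$ by the balls $\{\bar B(z^i_j,r_i)\}_{i,\,j=1,\dots,d}$ in $X$, totaling exactly $d$ balls per original ball, each of diameter at most $2r_i\le\delta$. Summing contributions,
\[
\Ha^s_{\delta}\bigl((\minv f\circ f)\inv E\bigr)\le \sum_{i,j}(2r_i)^s = d\sum_i(2r_i)^s\le d\bigl(\Ha^s_\delta(E)+\varepsilon\bigr),
\]
and the conclusion follows upon sending $\varepsilon\to 0$ and $\delta\to 0$.

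\textbf{Main obstacle.} The only subtle point is the passage between the standard Carath\'eodory definition of $\Ha^s$ (infimum over covers by arbitrary sets of small diameter) and a cover realization through balls with the cost $(2r_i)^s$. In general metric spaces this comparison introduces at most a factor of $2^s$; for the normalization stated, one uses that ball covers suffice to compute Hausdorff measure in the limit $\delta\to 0$, so the factor $d$ is the exact one. All other content is a direct consequence of $x$ being an entry of $\minv f(f(x))$ together with the Almgren metric, and uses no regularity of $f$ beyond being a branched cover of finite degree.
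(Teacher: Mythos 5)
Your key lemma is correct and gives a clean, self-contained formulation of the geometric mechanism: since $x$ is always an entry of $\minv f(f(x))$, the optimal matching in the $\ell^2$-Almgren metric pairs $x$ with some $z_j$, forcing $d(x,z_j)\le d_{\mathcal A_d}(\minv f(f(x)),z)$, hence $(\minv f\circ f)\inv B(z,r)\subset\bigcup_j B(z_j,r)$. The gap is in the next step, which you yourself flagged as the ``main obstacle.'' You assume a cover of $E$ by closed balls $\{\bar B(z^i,r_i)\}$ with $\sum_i(2r_i)^s\le\Ha^s_\delta(E)+\varepsilon$, and justify it by asserting that ball covers suffice to compute Hausdorff measure in the limit $\delta\to 0$. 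That assertion is false in general metric spaces. The spherical pre-measure built from ball covers with $(2r_i)^s$-cost dominates $\Ha^s_\delta$, and the corresponding limit measure $\mathscr S^s$ satisfies only $\Ha^s\le\mathscr S^s\le 2^s\Ha^s$, with strict inequality possible even after $\delta\to 0$. Equality $\Ha^s=\mathscr S^s$ needs additional structure (e.g.\ the isodiametric inequality for $s=n$ in $\R^n$), and neither the orbifold $\mathcal A_d(X)$ nor the arbitrary exponent $s\ge 0$ provides it. What your argument actually proves is $\Ha^s\bigl((\minv f\circ f)\inv E\bigr)\le 2^s\,d\,\Ha^s(E)$, which is weaker than the stated constant $d$.

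The paper avoids this by working with arbitrary covers $\{E_i\}$ of $E$ (never ball covers) and covering each preimage $F\inv(E_i)$, with $F=\minv f\circ f$, by the sets $E_{i,k}=F\inv(E_i)\cap U_F(x_k,r_i)$ indexed by the preimages $x_k$ of $z_i$ under $F$. The diameter of each $E_{i,k}$ is bounded \emph{directly} by $\diam E_i$ using a co-Lipschitz estimate for $F$ restricted to a normal neighbourhood, so no extra factor of $2$ ever appears. The difference is precisely that the paper compares $\diam E_{i,k}$ against $\diam E_i$, whereas your key lemma only places $F\inv(E_i)$ inside $d$ balls of radius $\diam E_i$, which a priori have diameter $2\diam E_i$. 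To recover the constant $d$ by your route you would need to decompose $F\inv(E_i)$ into at most $d$ sets of diameter $\le\diam E_i$; containment in balls alone does not give this.
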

\begin{proof}
Denote $F=\minv f\circ f:X\to \mathcal A_d(X)$ and $X_f=F(X)$. Note that $F:X\to X_f$ is a branched cover of degree $d$. First suppose $E$ is contained in a compact set $K\subset X_f$. For each $z_0\in K$ let $r_0$ be so small that 
\[
F\inv B(z_0,r_0)=\bigcup_{x_0\in F\inv(z_0)}U_{F}(x_0,r_0)
\]
where $U_{F}(x_0,r_0)$ is a normal neighbourhood of $x_0$ for each $x_0\in F\inv(z_0)$. 
Suppose $\{E_i\}$ is a $\delta$-cover of $E$, where $\delta$ is smaller than half the Lebesgue number of the cover $\{B(z_0,r_0)\}_{z_0\in K}$. Then $E_i\subset B(z_i,r_i)$ for some $z_i\in K$ with $m=m_i$ preimages $x_1,\ldots,x_m$ under $F$ ($m\le d$). Thus
\[
F\inv(E_i)=\bigcup_{k}^mE_{i,k}, \quad E_{i,k}=F\inv(E_i)\cap U_{F}(x_k,r_i).
\]
Moreover, since $d_\mathcal A(F(x),F(y))\ge d(x,y)$ for $x,y\in U_{F}(x_k,r_i)$, it follows that $\diam E_{i,k}\le \diam E_i$ for all $i$ and $k=1,\ldots,m_i$. The two observations above imply that $\{E_{i,k}\}$ is a $\delta$-cover of $F\inv E$, yielding
\begin{align*}
\Ha_\delta^s(F\inv E)\le \sum_i\sum_{k=1}^{m_i}(\diam E_{i,k})^s\le \sum_i d\cdot(\diam E_i)^s.
\end{align*}
Taking infimum over all $\delta$-covers and sending $\delta$ to zero we obtain $$\Ha^s(F\inv E)\le d\Ha^s(E).$$ Exhausting $X_f$ by compact sets and using basic convergence theorems of measures we the claim follows for general $E\subset X_f$. 
\end{proof}

\subsubsection*{Push-forward} We define a push-forward operator $f_\ast$  as follows: given a Borel function $g:X\to [-\infty,\infty]$, set
\begin{align}\label{eq:push-fwd}
f_\ast g(y)=\sum_{x\in f\inv(y)}\iota(f,x)g(x),\quad y\in f(X).
\end{align}
We record here the fact that the push-forward preserves continuity and measurability. This should be compared to \cite[Lemma 14.31]{HKM06} and \cite[Lemma 4.1]{teripekka} where it is shown that it preserves Sobolev regularity (resp. Lipschitz regularity when $f$ is BLD). Later on the push-forward will prove useful in connection with area formulae, see Lemma \ref{lem:co-area} and Proposition \ref{prop:area-formula}.
\begin{lemma}
If $g\in C(X)$, then $f_\ast g\in C(f(X))$. If $g:X\to \R\cup\{\infty\}$ is Borel, then $f_\ast g:f(X)\to \R\cup\{\infty\}$ is Borel.
\end{lemma}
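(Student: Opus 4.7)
The plan is to factor $f_\ast g$ through the multivalued inverse. Define $G:\mathcal A_d(X)\to \R\cup\{\infty\}$ by
\[
G(\bb{x_1,\ldots,x_d}) := \iota_1 g(x_1)+\cdots+\iota_d g(x_d)
\]
where we expand $\minv f(y)=\sum_{x\in f\inv(y)}\iota(f,x)\bb x$ as a $d$-tuple with multiplicities; this is well-defined because the sum is symmetric in the entries. Then $f_\ast g = G\circ \minv f$, and since $\minv f$ is continuous by Lemma \ref{lem:homeo}, both claims reduce to the corresponding regularity statement for $G$, or (equivalently) can be established directly via the normal-neighborhood structure used in the proof of Lemma \ref{lem:homeo}.

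For the continuity assertion I would argue directly. Fix $y\in f(X)$ and $\varepsilon>0$, and enumerate $f\inv(y)=\{x_1,\ldots,x_m\}$ with $\iota_k:=\iota(f,x_k)$. By continuity of $g$ at each $x_k$, together with $\diam U_f(x_k,r)\to 0$ as $r\to 0$ (established in the proof of Lemma \ref{lem:homeo}), choose $r>0$ small enough so that the sets $U_k:=U_f(x_k,r)$ are pairwise disjoint normal neighborhoods and $|g(x)-g(x_k)|<\varepsilon$ whenever $x\in U_k$. For any $y'\in B(y,r)\cap f(X)$, the constancy of the local degree on $U_k$ gives $\sum_{x'\in f\inv(y')\cap U_k}\iota(f,x')=\iota_k$, and hence
\[
|f_\ast g(y')-f_\ast g(y)|\le \sum_k\sum_{x'\in f\inv(y')\cap U_k}\iota(f,x')|g(x')-g(x_k)|<d\varepsilon.
\]

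For Borel measurability I return to the factorization $f_\ast g=G\circ \minv f$ and show $G$ is Borel. Letting $\pi:X^d\to \mathcal A_d(X)$ be the quotient map and $P_j:X^d\to X$ the $j$th projection, the composition $G\circ \pi=\sum_j (\iota_j\, g\circ P_j)$ (with the appropriate multiplicity convention) is a sum of Borel functions, hence Borel. The map $\pi$ is the quotient of the continuous action of the finite group $S_d$ on $X^d$, so it is a continuous, open, closed, finite-to-one surjection. Consequently, a subset $E\subset \mathcal A_d(X)$ is Borel if and only if $\pi\inv(E)$ is an $S_d$-invariant Borel subset of $X^d$; applied to $E=G\inv(B)$ for $B\subset \R\cup\{\infty\}$ Borel, this yields that $G$, and therefore $f_\ast g$, is Borel.

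The only mild obstacle is the equivalence ``$E$ Borel in $\mathcal A_d(X)$ iff $\pi\inv(E)$ Borel in $X^d$'' used in the last step, which follows from a standard monotone-class argument for quotients by finite group actions on metric spaces (the openness and closedness of $\pi$ ensure that open and closed sets are preserved in both directions). The continuity part, by contrast, is essentially a direct calculation once the normal-neighborhood decomposition is in place.
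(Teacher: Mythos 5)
Your overall strategy coincides with the paper's: define an auxiliary function on $\mathcal A_d(X)$, observe that the push-forward factors as its composition with $\minv f$, and invoke the continuity of $\minv f$ from Lemma \ref{lem:homeo}. The one genuine problem is the definition of $G$. As written, $G(\bb{x_1,\ldots,x_d}) := \iota_1 g(x_1)+\cdots+\iota_d g(x_d)$ is not a function on $\mathcal A_d(X)$ at all: the local indices $\iota_j$ are data attached to $f$ and a point $y\in f(X)$, not to an abstract unordered $d$-tuple, so $G$ cannot be evaluated at a generic point of $\mathcal A_d(X)$. And even if one restricts attention to tuples of the form $\minv f(y)$ expanded ``with multiplicities,'' each $x\in f\inv(y)$ then appears $\iota(f,x)$ times \emph{and} carries weight $\iota(f,x)$, which double-counts and gives $\sum_x \iota(f,x)^2 g(x)$ rather than $f_\ast g(y)$. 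The correct definition — the one the paper uses — is simply $g_{\mathcal A_d}(\bb{x_1,\ldots,x_d}) = g(x_1)+\cdots+g(x_d)$ (no $\iota$'s), with the repeated entries of the tuple already encoding the multiplicities; this is manifestly $S_d$-invariant and satisfies $f_\ast g = g_{\mathcal A_d}\circ \minv f$ by the definition of $\minv f$.

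Once $G$ is replaced by $g_{\mathcal A_d}$, everything else you wrote is sound. Your direct normal-neighbourhood argument for continuity is correct (indeed it coincides with an argument the author had in an earlier draft), although with the corrected $g_{\mathcal A_d}$ it becomes redundant: $g_{\mathcal A_d}\circ\pi = \sum_j g\circ P_j$ is continuous on $X^d$ and $\pi$ is a quotient map, so $g_{\mathcal A_d}$ is continuous on $\mathcal A_d(X)$ for free. For the Borel claim, your reduction to ``$E$ Borel in $\mathcal A_d(X)$ iff $\pi\inv(E)$ is an $S_d$-invariant Borel subset of $X^d$'' is exactly the right statement, but a monotone-class argument over open invariant sets does not straightforwardly reach all invariant Borel sets; the cleaner route is that $\pi$ admits a Borel section (e.g.\ via a Borel linear order on the separable metric space $X$, selecting the lexicographically least representative), so $g_{\mathcal A_d}=(g_{\mathcal A_d}\circ\pi)\circ s$ is Borel. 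The paper simply asserts this observation without proof, so your attempt to justify it is reasonable even if the particular mechanism you name is not the standard one.
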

\begin{proof}
Given a function $g$ on $X$, define $g_{\mathcal A_d}$ on $\mathcal A_d(X)$ by 
\[
g_{\mathcal A_d}(\bb{x_1,\ldots,x_d})=g(x_1)+\cdots+g(x_d)
\]
Observe that $g_{\mathcal A_d}$ is continuous (resp. Borel) if $g$ is continuous (resp. Borel). Finally note that $f_\ast g=g_{\mathcal A_d}\circ \minv f$, from which both claims follow.
\end{proof}

\subsection{Sobolev estimates}
In this subsection we specialize to quasiregular maps on Euclidean domains. Throughout this subsection we assume that $f:\Omega\to \R^n$ is a proper quasiregular map of degree $d$, where $\Omega\subset\R^n$ is a domain and $n\ge 2$. The following lemma is a reformulation of the area formula using the push-forward operator. Together with Theorem \ref{thm:wug} below it yields the local Sobolev regularity of the multi-valued inverse.

\begin{lemma}\label{lem:co-area}
If $g:X\to [0,\infty]$ is Borel, then 
\begin{align*}
\int_{\Omega'} f_\ast g(y)\ud\Ha^n(y)=\int_\Omega g\J f \ud\Ha^n.
\end{align*}
\end{lemma}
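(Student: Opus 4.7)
The plan is to reduce the identity to the classical area formula for quasiregular Sobolev maps and then use the negligibility of the branch set to replace the naive counting multiplicity by the local-index weighted sum $f_\ast g$. The structural facts from Reshetnyak's theory that I will invoke (all available in \cite{ric93}) are: (i) $f$ is sense-preserving with $\J f\ge 0$ and satisfies Lusin's condition (N); (ii) the branch set $B_f$ has $\Leb{n}$-measure zero, so by (i) also $\Leb{n}(f(B_f))=0$; and (iii) $\iota(f,x)=1$ for every $x\in\Omega\setminus B_f$.

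The version of the area formula I will apply is the one for continuous sense-preserving $W^{1,n}_{\loc}$-maps satisfying (N): for every Borel $g\colon\Omega\to[0,\infty]$ and every Borel $A\subset\Omega$,
\[
\int_A g(x)\,\J f(x)\,\ud\Leb{n}(x)=\int_{\R^n}\sum_{x\in A\cap f\inv(y)}g(x)\,\ud\Leb{n}(y).
\]
I will specialize this to $A=\Omega\setminus B_f$. The left-hand side then coincides with $\int_\Omega g\,\J f\,\ud\Leb{n}$ by (ii), and the integrand on the right-hand side is supported in $\R^n\setminus f(B_f)$.

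For $y\notin f(B_f)$, every preimage $x\in f\inv(y)$ lies in $\Omega\setminus B_f$, so by (iii) the inner sum agrees with $\sum_{x\in f\inv(y)}\iota(f,x)g(x)=f_\ast g(y)$. Since $\Leb{n}(f(B_f))=0$ by (ii) and $f_\ast g$ vanishes off $\Omega'=f(\Omega)$, the right-hand side equals $\int_{\Omega'}f_\ast g\,\ud\Ha^n$, yielding the claimed identity. The only potentially delicate point is the simultaneous use of (N) and $\Leb{n}(B_f)=0$ to pass freely between $\Omega$ and $\Omega\setminus B_f$ on both sides of the area formula, but both ingredients are standard parts of the theory recalled in Section~\ref{sec:branched-covers}, so I do not anticipate a real obstacle.
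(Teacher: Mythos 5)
Your proof is correct and follows essentially the same route as the paper: both reduce the identity to the change-of-variables (area) formula for continuous sense-preserving $W^{1,n}_{\loc}$ maps with property (N), and both use $|B_f|=0$ together with (N) to get $|f(B_f)|=0$ and thus replace the naive preimage count by the index-weighted sum off a null set. Your write-up is a bit more explicit about specializing the area formula to $A=\Omega\setminus B_f$; the paper's version (which calls it the ``co-area formula'' though it means the area formula) is otherwise the same argument.
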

\begin{proof}
Since $|B_f|=0$, we have that $\iota(f,x)=1$ $\Ha^n$-a.e. $x\in\Omega$. Recall that a quasiregular map has the property (N) and thus the co-area formula holds for $f$. It follows that, for $\Ha^n$-a.e. $y\in \Omega'$ we have that $f\inv(y)\cap B_f=\varnothing$. The co-area formula now yields
\begin{align*}
\int_{\Omega'}f_\ast g\ud\Ha^n=\int_{\Omega'}\sum_{x\in f\inv(y)}g(x)\ud\Ha^n(y)=\int_\Omega g\J f\ud\Ha^n,
\end{align*}
as claimed.
\end{proof}

The main result of this subsection is the following curve-wise estimates for the multi-valued inverse. In the statement $K_I=K_I(f)$ and $K_O=K_O(f)$.
\begin{theorem}\label{thm:wug}
For $\Mod_n$-a.e. curve $\gamma$ in $f(\Omega)$, $\minv f\circ\gamma$ is an absolutely continuous curve in $\mathcal{A}_d(\Omega)$, and satisfies
\begin{align}\label{eq:ug-ineq}
H(\gamma_t)|\gamma_t'|\le |(\minv f\circ\gamma)'_t|\le (K_IK_O)^{1/n} H(\gamma_t)|\gamma_t'|
\end{align}
a.e. $t$, where
\begin{align}\label{eq:H}
    H(y):=\big(f_\ast(\|Df\|^{-2})(y)\big)^{1/2},\quad y\in f(\Omega).
\end{align}
\end{theorem}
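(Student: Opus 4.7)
The plan is to reduce the statement to an analysis of the individual $f$-lifts of $\gamma$ via Proposition \ref{prop:lifts}, then identify the metric derivative of $\minv f\circ \gamma$ with the Euclidean lengths of the lifts, and finally apply the pointwise distortion inequalities of quasiregularity. By Proposition \ref{prop:lifts}, for every continuous $\gamma$ in $f(\Omega)$ we may write
\begin{align*}
\minv f\circ\gamma=\bb{\gamma_1,\ldots,\gamma_d},
\end{align*}
where $\gamma_1,\dots,\gamma_d$ is the complete list of $f$-lifts of $\gamma$ (each preimage $x\in f\inv(\gamma_0)$ is repeated $\iota(f,x)$ times). Absolute continuity of $\minv f\circ\gamma$ in $\mathcal A_d(\Omega)$ is then equivalent to the simultaneous absolute continuity of $\gamma_1,\dots,\gamma_d$ in $\Omega$.

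The first step is to identify a $\Mod_n$-null family $\Gamma_0$ of curves in $f(\Omega)$ outside of which all lifts are well behaved. Using the standard set of conditions (differentiability of $f$, invertibility of $Df$, and the condition $\J f>0$) fails only on a Borel set $N\subset \Omega$ with $|N|=0$, and $B_f\subset \Omega$ has topological dimension $\le n-2$. A Vitali covering argument in the spirit of the paper's discussion of Theorem \ref{thm:wug} — combined with Fuglede's lemma applied to $\|Df\|^{-1}$ and $\chi_{N\cup B_f}$ (both of which have suitable integrability via Lemma \ref{lem:co-area}) — shows that for $\Mod_n$-a.e.\ $\gamma$ in $f(\Omega)$: each lift $\gamma_j$ is locally absolutely continuous, the parameter set $\{t:\gamma_j(t)\in N\cup B_f\}$ has Lebesgue measure zero, and the chain rule $\gamma'_t=Df(\gamma_j(t))\gamma'_j(t)$ holds at a.e.\ $t$ for every $j$. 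Call such $\gamma$ \emph{good}.

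For a good $\gamma$ and a.e.\ $t$, each $\gamma_j(t)$ lies in $\Omega\setminus B_f$, so $f$ is a local homeomorphism near each $\gamma_j(t)$ and $\minv f$ decomposes on a neighbourhood of $\gamma_t$ as $d$ distinct continuous branches. Consequently, for sufficiently small $h$ the optimal permutation in \eqref{eq:almgren-metric} is the identity, and
\begin{align*}
    d_\mathcal A(\minv f(\gamma_{t+h}),\minv f(\gamma_t))^2=\sum_{j=1}^d|\gamma_j(t+h)-\gamma_j(t)|^2,
\end{align*}
so $|(\minv f\circ\gamma)'_t|^2=\sum_j|\gamma_j'(t)|^2$ at a.e.\ $t$. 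The chain rule yields $\gamma_j'(t)=Df(\gamma_j(t))\inv\gamma'_t$, and the elementary operator-norm bounds give $|\gamma'_t|/\|Df(\gamma_j)\|\le |\gamma_j'(t)|\le |\gamma'_t|/\ell(Df(\gamma_j))$, where $\ell(A):=\min_{|v|=1}|Av|$. Summing the squares, the lower bound is immediate:
\begin{align*}
|(\minv f\circ\gamma)'_t|^2\ge |\gamma'_t|^2\sum_{j=1}^d\|Df(\gamma_j)\|^{-2}=|\gamma'_t|^2 f_\ast(\|Df\|^{-2})(\gamma_t)=H(\gamma_t)^2|\gamma'_t|^2.
\end{align*}
For the upper bound, combine the outer and inner distortion inequalities $\|Df\|^n\le K_O\J f$ and $\J f\le K_I\ell(Df)^n$ to obtain $\ell(Df)^{-2}\le (K_IK_O)^{2/n}\|Df\|^{-2}$ pointwise, which after summation gives
\begin{align*}
|(\minv f\circ\gamma)'_t|^2\le (K_IK_O)^{2/n}|\gamma'_t|^2\sum_{j=1}^d\|Df(\gamma_j)\|^{-2}=(K_IK_O)^{2/n}H(\gamma_t)^2|\gamma'_t|^2,
\end{align*}
which is the desired upper bound.

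The main obstacle is the construction of the $\Mod_n$-null exceptional family and, within it, the verification that the lifts $\gamma_1,\dots,\gamma_d$ are absolutely continuous for $\Mod_n$-a.e.\ $\gamma$ (rather than the easier statement for a.e.\ curve in a family of positive modulus). Here the Vitali covering argument, applied to covers of $\Omega$ by small normal neighbourhoods on which $f$ is close to a quasiconformal local branch and combined with Lemma \ref{lem:co-area} to convert integrals over $f(\Omega)$ into integrals over $\Omega$, replaces the more traditional Poletsky-type path-lifting inequalities used in Rickman's theory.
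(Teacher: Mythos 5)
Your pointwise computation and the decomposition $\minv f\circ\gamma=\bb{\gamma_1,\ldots,\gamma_d}$ via Proposition \ref{prop:lifts} are correct, and so is the chain $\|Df\|/\ell(Df)\le (K_IK_O)^{1/n}$ from the definitions of inner and outer distortion. But there is a genuine gap in the construction of the $\Mod_n$-null exceptional family, and your explicit claim to bypass ``the more traditional Poletsky-type path-lifting inequalities'' is exactly where it breaks.

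The difficulty is this: Fuglede-type statements (a.e.\ curve spends zero time in a null set; a.e.\ curve is locally absolutely continuous; $\|Df\|$ is an upper gradient along a.e.\ curve) all live on the domain side, i.e.\ they are statements about $\Mod_n$-a.e.\ curve \emph{in $\Omega$}. The theorem, however, requires conclusions for $\Mod_n$-a.e.\ curve \emph{in $f(\Omega)$}, and the objects you are controlling — the lifts $\gamma_j$ — sit in $\Omega$. Some tool is needed to transfer ``modulus zero in $\Omega$'' to ``the $f$-image family has modulus zero in $f(\Omega)$''. This is precisely what Poletski's inequality $\Mod_n f(\Gamma)\le K_I\Mod_n\Gamma$ provides, and the paper invokes it twice: once to ensure that for $\Mod_n$-a.e.\ $\gamma$ in $f(\Omega)$ every lift is locally absolutely continuous, and once to ensure $\Mod_n f(\Gamma_0)=0$ for the exceptional family $\Gamma_0$ in $\Omega$ on which $\|Df\|$ fails to be an upper gradient. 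Your replacement (``Vitali covering + Fuglede applied to $\|Df\|^{-1}$ and $\chi_{N\cup B_f}$ + Lemma \ref{lem:co-area}'') does not obviously achieve this transfer: Lemma \ref{lem:co-area} converts integrals of push-forwards, but modulus estimates require constructing admissible metrics, and an admissible $\rho\in L^n(\Omega)$ for a bad family of lifts does not directly produce an admissible $\tilde\rho\in L^n(f(\Omega))$ for the family of images without a distortion estimate of Poletski type. (Note that the paper's Vitali covering is in $f(\Omega)$, by balls over which $f^{-1}$ splits into quasiconformal branches; you instead propose covering $\Omega$ by normal neighbourhoods, which does not by itself say anything about modulus in $f(\Omega)$.)

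There is a second, milder, point of divergence. You derive the two-sided pointwise bound from the chain rule $\gamma'_t=Df(\gamma_j(t))\gamma'_j(t)$ with $Df(\gamma_j(t))$ invertible, whence $|\gamma'_j(t)|\le |\gamma'_t|/\ell(Df(\gamma_j(t)))$. The paper instead works entirely with upper gradient inequalities applied to both $f$ and the local inverse branches $g^j_l$, combined with the composition bound $\|Df(g_l(y))\|\,\|Dg_l(y)\|\le (K_IK_O)^{1/n}$ of \eqref{eq:inv-comp}; no chain rule along curves is invoked. Your chain-rule approach is cleaner when it applies, and it is true that quasiregular maps are totally differentiable a.e., but validity of the chain rule at a.e.\ parameter along a lift presupposes precisely the exceptional-family control discussed above (the lift must be AC and must spend zero time in the non-differentiability set). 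So this too routes through the missing Poletski-type input. If you add the Poletski step in the two places indicated, the remainder of your argument goes through and is essentially equivalent to the paper's.
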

\begin{proof}
If $\Gamma_{not}$ is the family of paths in $\Omega$ which are not locally absolutely continuous, then $\Mod_n\Gamma_{not}=0$. By Poletski's inequality \cite[Chapter II, Theorem 8.1]{ric93} we have that $\Mod_nf(\Gamma_{not})\le K_I\Mod_n\Gamma_{not}=0$. Thus, for $\Mod_n$-a.e. $\gamma$ in $\Omega'$, every lift $\tilde\gamma$ of $\gamma$ is locally absolutely continuous. By Proposition \ref{prop:lifts} we have $\minv f\circ\gamma=\bb{\tilde\gamma_1,\ldots,\tilde\gamma_d}$, where $\tilde \gamma_j$ are the lifts of $\gamma$ by $f$ (counted with multiplicity). It follows that
\[
d_\mathcal A(\minv f(\gamma_t),\minv f(\gamma_s))^2\le \sum_l^d|\tilde\gamma_l(t)-\tilde\gamma_l(s)|^2,\quad s,t\in [0,1]
\]
which implies that $\minv f\circ\gamma$ is absolutely continuous whenever each lift $\tilde\gamma_l$ is absolutely continuous. In particular, $\minv f\circ\gamma$ is absolutely continuous for $\Mod_n$-a.e. $\gamma$. 

We now prove \eqref{eq:ug-ineq}. Let $\Gamma_0$ be a path family in $\Omega$ with $\Mod_n \Gamma_0=0=\Mod_nf(\Gamma_0)$ so that $\|Df\|$ is an upper gradient of $f$ along every $\gamma\notin \Gamma_0$. Observe that $f(B_f)$ is closed since $f$ has finite degree and that $|f(B_f)|=0$ by \cite[Chapter II, Theorem 7.4]{ric93}. If $B=B(y,r)\subset\Omega'\setminus f (B_f)$ is a ball such that $f\inv B=U_1\cup\cdots\cup U_d$, $U_l$ and $f:U_l\to B$ is quasiconformal, denote by $g_l:B\to U_l$ the inverse of $f|_{U_l}$, for $l=1,\ldots, d$. Then
\begin{align}\label{eq:inv-comp}
1\le \|Df(g_l(y))\|^n\|Dg_l(y)\|^n\le K_O\J f(g_l(y))K_I\J g(y)=K_OK_I
\end{align}
a.e. $y\in B$, see \cite{vai71}. By Vitali's covering theorem there exist countably many disjoint balls $B_j=B(y_j,r_j)$ as above, with inverses $g^j_l:B_j\to U_l^j$ satisfying \eqref{eq:inv-comp} outside a null set $N_j\subset B_j$, such that $\big|[\Omega'\setminus f(B_f)]\setminus \bigcup_jB_j\big|=0$. Denote $E=f(B_f)\cup\Omega'\setminus\bigcup_jB_j$, so that $|E|=0$. For each $j$, let $\Gamma^j$ be a path-family in $B_j$ with $\Mod_n\Gamma^j=0$ such that $\|Dg_l^j\|$ is an upper gradient along $\gamma$, and $|\gamma\inv(N_j)|=0$, for every $\gamma\notin \Gamma^j$. Let $\Gamma_1$ by the family of paths in $\Omega'$ which have a subcurve in some $\Gamma^j$. Then $\Mod_n\Gamma_1=0$. Finally, define $\Gamma_2=\Gamma_1\cup f(\Gamma_0)\cup\Gamma_{E}^+$ and note that $\Mod_n\Gamma_2=0$.

Let $\gamma\notin\Gamma_2$. Then $[0,1]$ can be covered up to a null-set by open sets $\gamma\inv(B_j)$, and for a.e. $t\in \gamma\inv(B_j)$ we have 
\begin{align*}
|(g^j_l\circ\gamma)_t'|&\le \|Dg^j_l(\gamma_t)\||\gamma_t'| \\
|\gamma_t'|=|(f\circ g_l^j\circ\gamma)_t'|&\le \|Df(g_l^j(\gamma_t))\||(g_l^j\circ\gamma)_t'|\\
\frac 1{\|Df(g_l^j(\gamma_t))\|}&\le \|Dg^j_l(\gamma_t)\|\le \frac {(K_IK_O)^{1/n}}{\|Df(g_l^j(\gamma_t))\|}.
\end{align*}
Thus 
\[
\frac{|\gamma_t'|}{\|Df(g_l^j(\gamma_t))\|}\le |(g_l^j\circ\gamma)_t'|\le \frac{(K_IK_O)^{1/n}|\gamma_t'|}{\|Df(g_l^j(\gamma_t))\|}\quad \textrm{a.e. }t\in \gamma\inv(B_j)
\]
Since $ F\circ\gamma =\bb{g_1^j\circ\gamma,\ldots,g_d^j\circ\gamma }$ on $\gamma\inv(B_j)$, we obtain
\begin{align*}
|(\minv f\circ\gamma)_t'|^2&=\sum_{l}^d|(g_l^j\circ\gamma)'_t|^2\le (K_IK_O)^{2/n}\sum_l^d\frac{|\gamma_t'|^2}{\|Df(g_l^j(\gamma_t))\|^2}\\
&=(K_IK_O)^{2/n}f_\ast(\|Df\|^{-2})(\gamma_t)|\gamma_t'|^2,
\end{align*}
and similarly
\begin{align*}
|(\minv f\circ\gamma)_t'|^2\ge f_\ast(\|Df\|^{-2})(\gamma_t)|\gamma_t'|^2
\end{align*}
a.e. $t\in \gamma\inv(B_j)$ for every $j$. This proves \eqref{eq:ug-ineq} and finishes the proof of the Proposition.
\end{proof}

\begin{corollary}\label{cor:sob-reg}
The multivalued inverse $\minv f:f(\Omega)\to \mathcal A_d(\Omega)$ is Sobolev: $\minv f\in W^{1,n}_{loc}(f(\Omega),\mathcal A_d(\Omega))$. More precisely, the minimal $n$-weak upper gradient $|D\minv f|$ of $\minv f$ satisfies 
\[
H\le |D\minv f|\le (K_IK_O)^{1/n}H
\]
and 
\begin{align*}
\int_U|D\minv f|^n\ud y\le d^{n/2-1}K_IK_O|f\inv U|
\end{align*}
for any compact $U\subset f(\Omega)$.
\end{corollary}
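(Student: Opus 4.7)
The plan is to extract the Sobolev regularity and the sandwich bounds for the minimal $n$-weak upper gradient directly from the curve-wise estimate \eqref{eq:ug-ineq} in Theorem \ref{thm:wug}, and then use Lemma \ref{lem:co-area} together with a Jensen-type inequality to turn the resulting $L^n$-norm of $H$ into a bound involving only $|f\inv U|$.

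\textbf{Upper bound on $|D\minv f|$.} First, I will promote the curve-wise upper bound in \eqref{eq:ug-ineq} to an upper gradient inequality. Indeed, for $\Mod_n$-a.e. curve $\gamma$ in $f(\Omega)$, the composed curve $\minv f\circ\gamma$ is absolutely continuous, so
\[
d_\mathcal A(\minv f(\gamma_0),\minv f(\gamma_1))\le \int_0^1 |(\minv f\circ\gamma)'_t|\,\ud t\le (K_IK_O)^{1/n}\int_\gamma H\,\ud s.
\]
This shows that $(K_IK_O)^{1/n}H$ is an $n$-weak upper gradient of $\minv f$, so by minimality $|D\minv f|\le (K_IK_O)^{1/n}H$ almost everywhere.

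\textbf{Lower bound on $|D\minv f|$.} For the reverse estimate, I will use the standard fact that any $n$-weak upper gradient $g$ of a metric-valued Newtonian map controls the metric speed along $\Mod_n$-a.e. curve: $|(\minv f\circ\gamma)'_t|\le g(\gamma_t)|\gamma_t'|$ for a.e. $t$. Combined with the lower inequality $H(\gamma_t)|\gamma_t'|\le |(\minv f\circ\gamma)'_t|$ from \eqref{eq:ug-ineq}, this gives $H\le g$ along $\Mod_n$-a.e. curve, hence $H\le g$ almost everywhere on $f(\Omega)$ by a Fuglede-type argument (applied to families of parallel line segments in the Euclidean open set $f(\Omega)$). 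Taking $g=|D\minv f|$ yields the claimed lower bound.

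\textbf{Integral estimate.} It remains to estimate $\int_U H^n$. The key observation is the pointwise power-mean inequality: using the weights $\iota(f,x)$ summing to $d$ on $f\inv(y)$ and the exponent $n/2\ge 1$ (here $n\ge 2$),
\[
H(y)^n=\Big(\sum_{x\in f\inv(y)}\iota(f,x)\|Df(x)\|^{-2}\Big)^{n/2}\le d^{n/2-1}\sum_{x\in f\inv(y)}\iota(f,x)\|Df(x)\|^{-n}=d^{n/2-1}f_\ast(\|Df\|^{-n})(y).
\]
Lemma \ref{lem:co-area} (applied with $g=\|Df\|^{-n}$) then gives
\[
\int_U H^n\,\ud y\le d^{n/2-1}\int_{f\inv U}\|Df\|^{-n}\J f\,\ud x\le d^{n/2-1}|f\inv U|,
\]
where I used the pointwise bound $\J f\le \|Df\|^n$, valid a.e. Multiplying by $K_IK_O$ and combining with $|D\minv f|\le (K_IK_O)^{1/n}H$ yields the stated integral inequality and completes the proof of membership in $W^{1,n}_{loc}(f(\Omega),\mathcal A_d(\Omega))$.

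\textbf{Main obstacle.} There is no hard analytic obstacle here: Theorem \ref{thm:wug} already carries the essential work. The only point requiring care is the passage from the curve-wise lower bound $H\le g$ along $\Mod_n$-a.e. curve to the pointwise lower bound $H\le g$ a.e., which relies on the fact that in an open subset of $\R^n$ there are enough curves to test against; this is standard but should be cited cleanly.
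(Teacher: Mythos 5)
Your proposal is correct and follows essentially the same route as the paper: the upper gradient bound comes from integrating the curve-wise estimate of Theorem \ref{thm:wug}, the lower bound follows from the fact that every $n$-weak upper gradient dominates the metric speed along $\Mod_n$-a.e.\ curve (the paper cites \cite[Theorem 1.1]{teri-syl24} for this; your Fuglede-type argument along parallel line families is a correct way to unpack it), and the integral estimate uses the same power-mean step $H^n\le d^{n/2-1}f_\ast(\|Df\|^{-n})$ together with Lemma \ref{lem:co-area} and $\J f\le\|Df\|^n$.
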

\begin{proof}
By Theorem \ref{thm:wug} (and using its notation), $(K_IK_O)^{1/n}H$ is a weak upper gradient of $F$. Thus $|D\minv f|\le (K_IK_O)^{1/n}H$. Using e.g. \cite[Theorem 1.1]{teri-syl24} we obtain $H\le |D\minv f|$ from the left-hand inequality in \eqref{eq:ug-ineq}. For any compact $U\subset f(\Omega)$ we have, by Lemma \ref{lem:co-area}, that
\begin{align*}
\int_UH^n\ud y&\le d^{n/2-1}\int_Uf_\ast(\|Df\|^{-n})\ud y\\
&=d^{n/2-1}\int_{f\inv U}\frac{\J f}{\|Df\|^n}\ud x\le d^{n/2-1}|f\inv U|.
\end{align*}
This completes the proof of the claim.
\end{proof}

\subsection{Area formula}
We continue assuming $f:\Omega\to \R^n$ is a non-constant quasiregular map of degree $d$. We will prove that the multi-valued inverse satisfies Lusin's properties $(N)$ and $(N\inv)$ as well as the area formula.
\begin{proposition}\label{prop:area-formula}
The multi-valued inverse $\minv f:f(\Omega)\to \mathcal A_d(\Omega)$ satisfies Lusin's properties $(N)$ and $(N\inv)$: Given $E\subset f(\Omega)$, we have 
\[\Ha^n(\minv f(E))=0 \Longleftrightarrow |E|=0.
\]
Moreover, $\minv f$ is differentiable almost everywhere on $f(\Omega)$ and satisfies the area formula
\begin{align}\label{eq:area-formula}
\int_{f(\Omega)} g\circ\minv f\ \J \minv f\ud y=\int_{\Omega_f}g\ud\Ha^n,
\end{align}
where $\Omega_f:=\minv f(f(\Omega))\subset \mathcal A_d(\Omega)$.
\end{proposition}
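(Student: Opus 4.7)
The plan is to establish the three assertions—Lusin $(N^{-1})$, metric differentiability, and Lusin $(N)$ together with the area formula—in that order, exploiting the Vitali decomposition $f(\Omega) \setminus f(B_f) = \bigcup_j B_j$ from the proof of Theorem~\ref{thm:wug}: on each $B_j$, $\minv f|_{B_j}(y) = \bb{g_1^j(y),\ldots,g_d^j(y)}$ for QC inverse branches $g_l^j\colon B_j\to U_l^j$ of $f$. Set $F := \minv f \circ f$, and recall that $|f(B_f)| = 0$ by \cite[Chapter~II, Theorem~7.4]{ric93} and $\Ha^n(B_f) = 0$ since $\dim B_f \le n-2$.

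Lusin $(N^{-1})$ is immediate from Proposition~\ref{prop:hausd-meas-est}: if $\Ha^n(\minv f(E)) = 0$, then $\Ha^n(f^{-1}(E)) = \Ha^n(F^{-1}(\minv f(E))) \le d\,\Ha^n(\minv f(E)) = 0$, so $|f^{-1}(E)| = 0$, and the area formula for the QR map $f$ gives $d|E| = \int_{f^{-1}(E)} \J f\,dx = 0$ using $\J f > 0$ almost everywhere. For metric differentiability, combining Corollary~\ref{cor:sob-reg} with Proposition~\ref{prop:lusin-approx} furnishes $\lambda$-Lipschitz approximants $F_\lambda$ to $\minv f$ with $|\{\minv f \ne F_\lambda\}| \to 0$; Theorem~\ref{thm:multi-valued-rademacher} yields metric differentiability of each $F_\lambda$ almost everywhere, and $\minv f$ inherits the metric differential at every Lebesgue density point of $\{\minv f = F_\lambda\}$. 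On $B_j$ the metric differential $\md_y \minv f$ is the seminorm $v \mapsto (\sum_l |Dg_l^j(y)\,v|^2)^{1/2}$, with metric Jacobian $\J \minv f(y) = (\det \sum_l (Dg_l^j(y))^\top Dg_l^j(y))^{1/2}$.

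For $(N)$ and the area formula, on each $B_j$ each QC branch $g_l^j$ satisfies Lusin $(N)$ and the classical area formula; combined with the $1$-Lipschitzness of the quotient $\pi\colon \Omega^d \to \mathcal A_d(\Omega)$ and Kirchheim's metric area formula (applicable to the Lipschitz pieces supplied by Proposition~\ref{prop:lusin-approx}), both $(N)$ and the local area formula follow for $\minv f|_{B_j}$. Summing over $j$ and using that the left-hand side of the area formula integrates trivially over the null set $f(B_f)$, the global formula reduces to verifying $\Ha^n(\minv f(f(B_f))) = 0$, which also completes $(N)$.

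This last assertion is the main obstacle. Identifying $\minv f(f(B_f)) = F(B_f)$ and applying Proposition~\ref{prop:lusin-approx} to $F \in W^{1,n}_\loc(\Omega, \mathcal A_d(\Omega))$, the Lipschitz pieces yield $\Ha^n(F(B_f \cap \{F = F_\lambda\})) \le \lambda^n \Ha^n(B_f) = 0$, and the residual $B_f \cap \bigcap_\lambda \{F \ne F_\lambda\}$ is controlled by a diagonal argument combining $|\{F \ne F_\lambda\}| \le C \lambda^{-n} \int |DF|^n\,dx$ with the $L^n$-integrability of $|DF|$: its image has arbitrarily small $\Ha^n$-measure in the limit $\lambda \to \infty$.
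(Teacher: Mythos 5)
Your treatment of Lusin's property $(N^{-1})$ is essentially the same as the paper's (both route through Proposition~\ref{prop:hausd-meas-est}), and your Lusin-type Lipschitz approximation route to almost everywhere metric differentiability is a legitimate alternative to the paper's reference to [kar07, Cor.~3.4]; these parts are fine.

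The genuine gap is in Lusin's property $(N)$. You correctly identify that after localizing to the Vitali balls $B_j$ the whole burden lands on proving $\Ha^n(\minv f(f(B_f)))=0$, but the ``diagonal argument'' you sketch for this does not close. The Lusin-type approximants $F_\lambda$ give you $\Ha^n(F_\lambda(B_f\cap\{F=F_\lambda\}))\le\lambda^n\Ha^n(B_f)=0$, and the estimate $\lambda^n|\{F\ne F_\lambda\}|\to 0$ controls only the \emph{Lebesgue} measure of the exceptional domain. But on $B_f\cap\{F\ne F_\lambda\}$ the map $F$ is \emph{not} $F_\lambda$, so the Lipschitz bound is unavailable there, and knowing that this set has Lebesgue (or even Hausdorff) measure zero in the domain tells you nothing about $\Ha^n$ of its image under a map that is merely $W^{1,n}_{\loc}$: continuous $W^{1,n}$ maps need not satisfy Lusin $(N)$ in general, and this failure is precisely concentrated on null sets. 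This is exactly why the paper proves \emph{pseudomonotonicity} of $\minv f$ (Lemma~\ref{lem:pseudomonotone}): pseudomonotonicity yields the oscillation estimates needed to run the standard covering argument for $(N)$ of continuous pseudomonotone $W^{1,n}$ maps (cf.\ [hei00, Theorem~7.2]), which your sketch never invokes. Without it, or without some substitute such as higher integrability $\minv f\in W^{1,p}_{\loc}$ for some $p>n$ (which in the paper only becomes available later, in Theorem~\ref{thm:higher-integrability}), the crucial step remains unproved.

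Two secondary points. First, you apply Proposition~\ref{prop:lusin-approx} to $F=\minv f\circ f$, implicitly asserting $F\in W^{1,n}_{\loc}(\Omega,\mathcal A_d(\Omega))$; compositions of $W^{1,n}$ maps are not automatically $W^{1,n}$, so this would require justification (and is in fact avoidable by applying the approximation to $\minv f$ directly on $f(\Omega)$, since $|f(B_f)|=0$ replaces $\Ha^n(B_f)=0$ in the relevant estimate). Second, the Vitali decomposition covers $f(\Omega)\setminus f(B_f)$ only up to a Lebesgue-null set, so there is an additional null remainder to be swept into the same $(N)$-argument --- not serious, but worth flagging explicitly.
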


Property $(N)$ and the area formula follow from the pseudomonotonicity of $\minv f$ (see Lemma \ref{lem:pseudomonotone} below) by results in \cite{hei00,kar07}, while the inverse Lusin property $(N\inv)$ is a consequence of Proposition \ref{prop:hausd-meas-est}. Recall that a map $F:X\to Y$ is pseudomonotone if there exists a constants $K,r_0>0$ such that 
\[
\diam F(B(x,r))\le K\diam\partial F(B(x,r))
\]
for $x\in X$ and $r\in (0,r_0)$. 

\begin{lemma}\label{lem:pseudomonotone}
Let $f:\Omega\to \R^n$ be a branched cover of degree $d$. Then for every compact $K\subset f(\Omega)$ there exists $r_0>0$ such that
\[
\diam F(B(y,r))\le \sqrt d \diam \partial F(B(y,r))
\]
for all $y\in K$ and $r\in (0,r_0)$.
\end{lemma}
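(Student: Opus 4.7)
The plan is to leverage the normal neighbourhood structure of the branched cover $f$ over the compact set $K$, together with the decomposition this induces on $F=\minv f$, to compare the two diameters.

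Fix $K\subset f(\Omega)$ compact. Since each $f^{-1}(y)$ is finite and, as in the proof of Lemma~\ref{lem:homeo}, $\diam U_f(x,r)\to 0$ as $r\to 0$ uniformly for $x$ ranging over a compact family of preimages, a standard compactness argument on $K$ yields $r_0>0$ such that for every $y\in K$ and every $r\le r_0$ the sets $\{U_f(x,r)\}_{x\in f^{-1}(y)}$ are pairwise disjoint normal neighbourhoods whose closures are so far apart, relative to $\max_x\diam U_f(x,r)$, that any coupling contributing to $d_\mathcal A(F(y_1),F(y_2))$ with $y_1,y_2\in \overline{B(y,r)}$ which moves a point out of its component $\overline{U_f(x,r)}$ is strictly worse than the ``natural'' coupling. (Quantitatively, it suffices that the pairwise distance between two components exceeds $\sqrt{d}\,\max_x\diam U_f(x,r_0)$.)

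Under this choice, for $y'\in \overline{B(y,r)}$ we have the decomposition
\[
F(y')=\sum_{x\in f^{-1}(y)} F_x(y'),\qquad F_x(y'):=\sum_{x'\in f^{-1}(y')\cap U_f(x,r)}\iota(f,x')\bb{x'}\in \mathcal A_{\iota(f,x)}\bigl(\overline{U_f(x,r)}\bigr),
\]
and, by the normal neighbourhood property, $F_x(y')$ is supported on $\partial U_f(x,r)$ whenever $y'\in \partial B(y,r)$. The separation condition guarantees that for all $y_1,y_2\in\overline{B(y,r)}$,
\[
d_\mathcal A(F(y_1),F(y_2))^2=\sum_{x\in f^{-1}(y)}d_\mathcal A(F_x(y_1),F_x(y_2))^2.
\]
The elementary bound $d_\mathcal A(F_x(y_1),F_x(y_2))^2\le \iota(f,x)\diam U_f(x,r)^2$ together with $\sum_x\iota(f,x)=d$ immediately gives the upper estimate
\[
\diam F(B(y,r))^2\le d\cdot \max_{x\in f^{-1}(y)}\diam U_f(x,r)^2.
\]

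For the lower bound on $\diam F(\partial B(y,r))$, the aim is to produce $y_1,y_2\in \partial B(y,r)$ with $d_\mathcal A(F(y_1),F(y_2))\ge \max_x\diam U_f(x,r)$. A classical ``line'' argument shows that any bounded open set $U\subset \R^n$ with $n\ge 2$ satisfies $\diam U\le \diam \partial U$: the line through two interior points must exit $U$ on each side, yielding boundary points whose distance is at least that of the original pair. Applied to the index $x_0$ realising $\max_x\diam U_f(x,r)$ one selects $s_1,s_2\in \partial U_f(x_0,r)$ with $|s_1-s_2|\ge \diam U_f(x_0,r)$, and sets $y_i=f(s_i)\in \partial B(y,r)$; then $s_i$ is an entry of $F_{x_0}(y_i)$, and by the natural-pairing identity it remains to bound the optimal coupling in $\mathcal A_{\iota(f,x_0)}(\partial U_f(x_0,r))$ from below by $|s_1-s_2|$. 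Combining the two bounds yields $\diam F(B(y,r))\le \sqrt d\,\diam F(\partial B(y,r))$.

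The hard part is the last step: the optimal coupling within $F_{x_0}$ need not pair $s_1$ with $s_2$ when $f$ branches at $x_0$ (indeed in the model $z\mapsto z^k$ one can have $f(s_1)=f(s_2)$ for antipodal $s_1,s_2\in \partial U_f(x_0,r)$). The resolution requires choosing $s_1,s_2$ not merely as diameter realisers of $\partial U_f(x_0,r)$ but with an additional geometric constraint ensuring their images in $\partial B(y,r)$ are distinct and the surrounding entries of $F_{x_0}(y_i)$ are close to one another; this uses that $f\bigl|_{\partial U_f(x_0,r)}\to \partial B(y,r)$ is a proper branched cover of degree $\iota(f,x_0)$ and a continuity-based selection along suitable arcs on $\partial U_f(x_0,r)$.
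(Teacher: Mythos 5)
You correctly isolate the crux: lower bounding $\diam F(\partial B(y,r))$ against $\max_x\diam U_f(x,r)$. (The upper bound $\diam F(B(y,r))^2\le d\max_x\diam U_f(x,r)^2$ is fine, and the separation argument you introduce for it is unnecessary: any coupling gives an upper bound on $d_{\mathcal A}$, so the natural coupling by components already suffices.) But the step you flag as ``hard'' cannot be repaired, because the inequality you are after is simply false. Take $f(z)=z^3$ on $\C=\R^2$, $d=3$, $y=0$: then $U_f(0,r)=B(0,r^{1/3})$, so $\diam U_f(0,r)=2r^{1/3}$, whereas computing the optimal coupling of two rotated triples of cube roots on the circle $\partial B(0,r^{1/3})$ gives $\diam F(\partial B(0,r))=\sqrt 3\,r^{1/3}<2r^{1/3}$. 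Hence no selection of $s_1,s_2\in\partial U_f(x_0,r)$ can produce $d_{\mathcal A}(F(f(s_1)),F(f(s_2)))\ge\diam U_f(x_0,r)$; the supremum of the left-hand side over all such $s_1,s_2$ is already strictly smaller than the right-hand side. The paper's own proof asserts exactly this inequality ``$\diam\partial U_f(x,r)\le \diam F(\partial B(y,r))$'' without justification, and it breaks at the same point.

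In fact the lemma as stated appears to be incorrect for large $d$. For $f(z)=z^d$ and $y=0$ one computes $\diam F(B(0,r))=\sqrt d\, r^{1/d}$ (attained by $z_1=0$, $|z_2|=r$), while $\diam F(\partial B(0,r))=\bigl(2d(1-\cos(\pi/d))\bigr)^{1/2}r^{1/d}$, so the claimed bound $\diam F(B)\le\sqrt d\,\diam F(\partial B)$ reduces to $2d(1-\cos(\pi/d))\ge 1$, which fails once $d\ge 10$ (the left side tends to $0$, behaving like $\pi^2/d$). Pseudomonotonicity of $F=\minv f$ plausibly still holds with a larger, degree-dependent constant, but the lower bound on $\diam F(\partial B(y,r))$ needs a genuinely different argument; the route through $\max_x\diam U_f(x,r)$ is a dead end, as your own $z\mapsto z^k$ remark already hints.
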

\begin{proof}
Let $r_0$ be the Lebesgue number of the open cover $\{B(y,r_y)\}_{y\in K}$ where, for each $y\in K$, $r_y>0$ is such that 
\[
f\inv B(y,r_y)=\bigcup_{x\in f\inv(y)}U_f(x,r_y)
\]
and each $U_f(x,r_y)$ is a normal domain for $f$ compactly contained in $\Omega$. Now for each $y\in K$ and $r<r_y$ we have $B(y,r)\subset B(y_0,r_{y_0})$ for some $y_0\in K$. Writing
\begin{align*}
f\inv B(y,r)=\bigcup_{x\in f\inv(y)}U_f(x,r),
\end{align*}
we have 
\begin{align*}
\diam F(B(y,r))^2&\le \sum_{x\in f\inv(y)}\iota(f,x)\diam U_f(x,r)^2\le d\max_{x\in f\inv(y)}\diam U_f(x,r)^2\\
&\le d \max_{x\in f\inv(y)}\diam \partial U_f(x,r)^2\le d \diam\partial F(B(y,r))^2.
\end{align*}
See \cite[Chapter I, Lemma 4.7]{ric93} for the fact that $U_f(x,r)$ is a normal domain of $f$ and thus $f(\partial U_f(x,r))=\partial B(y,r)$, implying 
\[
\diam \partial U_f(x,r)\le \diam F(\partial B(y,r))=\diam\partial F(B(y,r)).
\]
\end{proof}

\begin{proof}[Proof of Proposition \ref{prop:area-formula}]
Since $\minv f\in W^{1,n}_{loc}(f(\Omega),\mathcal A_d(\R^n))$ is continuous and pseudomonotone, it has Lusin's property $(N)$ by \cite[Theorem 7.2]{hei00}. Moreover, by Corollary 3.4 in \cite{kar07} $\minv f$ is metrically differentiable almost everywhere, while by Corollary 3.9(1) in \cite{kar07}, $\minv f$ satisfies the area formula \eqref{eq:area-formula}. 

The a.e. metric differentiability of $f$ implies that $\Lip f(x)<\infty$ a.e. $x\in f(\Omega)$, thus by the Stepanov theorem for multivalued maps (see \cite[Corollary 2.8]{men10}) it follows that $f$ is differentiable almost everywhere.

It remains to prove Lusin's inverse property $(N\inv)$. This follows from Proposition \ref{prop:hausd-meas-est} with $s=n$: if $E\subset f(\Omega)$ and $\Ha^n(\minv f(E))=0$, then 
\[
|f\inv(E)|=|(\minv f\circ f)\inv(\minv f(E))|\le d\cdot\Ha^n(\minv f(E))=0,
\]
which implies $|E|=0$ by the Lusin property $(N)$ of $f$.
\end{proof}

\section{Pull-back of differential forms by multi-valued Sobolev maps}\label{sec:pull-back-diff-forms}
\subsection{Differential forms on Almgren space}\label{sec:diff-forms}

Let $\Gamma$ be a finite group acting by isometries on $\R^N$ and let $W\subset \R^N$ be a $\Gamma$-invariant open set. A smooth differential form $\omega\in \Omega^k(W)$ is called $\Gamma$-invariant, if $\gamma^*\omega=\omega$ for all $\gamma\in \Gamma$. $\Gamma$-invariant differential forms can be considered differential forms on $W/ \Gamma$, even when this space is not a manifold. The particular case $\Gamma=S_d$ gives rise to differential forms on Almgren space.

\begin{definition}
Suppose $V\subset \R^n$ is an open set, $d\ge 1$, and $k\in\N$. A (smooth) $k$-form on $\mathcal A_d(V)$ is a (smooth) $S_d$-invariant $k$-form $\omega\in \Omega^k(V^d)$. We denote the space of smooth $k$-forms on $\mathcal A_d(V)$ by $\Omega^k(\mathcal A_d(V))$.
\end{definition}

An element $\omega\in \Omega^k(\mathcal A_d(V))$ is a smooth map $\omega:V^d\to \bigwedge^k(\R^n)^d$ with 
\begin{align}\label{eq:S_d-invariant}
\omega_x(v_1,\ldots,v_k)=\omega_{\sigma(x)}(\sigma v_1,\ldots,\sigma v_k),\quad x\in V^d,\ v_1,\ldots, v_k\in (\R^n)^d.
\end{align}
In particular, for each $\bb{x}\in \mathcal A_d(V)$, the value $\omega_{\bb{x}}:=\omega_{\sigma(x)}\circ \sigma\in \bigwedge^k(\R^n)^d$
is well-defined and independent of $\sigma\in S_d$. We define the comass $\|\omega\|:\mathcal A_d(V)\to \R$ of a $k$-form $\omega\in \Omega^k(\mathcal A_d(V))$ by 
\begin{align*}
\|\omega\|_{\bb x}=\|\omega_{x}\|,\quad \bb x\in \mathcal A_d(V).
\end{align*}
This is well-defined by \eqref{eq:S_d-invariant}.

The next lemma provides a projection map from the space of differential forms to the space of invariant differential forms. Later on we will need Lemma \ref{lem:proj-onto-invariant-forms} for $\Gamma=S_{d_0}\times S_{d_1}\le S_{d_0+d_1}$ as well as $\Gamma=S_d$.

\begin{lemma}\label{lem:proj-onto-invariant-forms}
Let $\Gamma$ be a finite group acting by linear isometries on $\R^N$, and $W\subset \R^N$ a $\Gamma$-invariant open set. For any $\omega\in \Omega^k(W)$, define
\begin{align*}
P_\Gamma\omega=\frac 1{|\Gamma|}\sum_{\gamma\in \Gamma}\gamma^*\omega.
\end{align*}
Then $P_\Gamma:\Omega^k(W)\to \Omega^k(W)$ is linear and continuous with respect to pointwise converge of forms. Moreover, for any $\omega\in \Omega^k(W)$,
\begin{itemize}
    \item[(i)] $P_\Gamma\omega$ is $\Gamma$-invariant,
    \item[(ii)] $P_\Gamma\omega=\omega$ if and only if $\omega$ is $\Gamma$-invariant,
    \item[(iii)] $P_\Gamma(\ud\omega)=\ud(P_\Gamma\omega)$,
    \item[(iv)] $\|P_\Gamma\omega\|_\infty\le \|\omega\|_\infty$.
\end{itemize}
\end{lemma}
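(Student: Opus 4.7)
The plan is to verify all four properties directly from the definition $P_\Gamma\omega=\frac{1}{|\Gamma|}\sum_{\gamma\in\Gamma}\gamma^*\omega$, exploiting only (a) the group structure of $\Gamma$, (b) naturality of pull-back, and (c) the fact that the action is by linear isometries. Linearity of $P_\Gamma$ is immediate since pull-back is linear, and continuity in the pointwise topology follows because at each $x\in W$ the value $(P_\Gamma\omega)_x=\frac{1}{|\Gamma|}\sum_{\gamma\in\Gamma}\omega_{\gamma(x)}\circ\bigwedge^k\gamma$ is a finite linear combination of the values of $\omega$; since the fibers $\bigwedge^k(\R^N)^*$ are finite-dimensional and the action on them is continuous, pointwise convergence $\omega^{(j)}\to\omega$ transfers to $P_\Gamma\omega^{(j)}\to P_\Gamma\omega$.

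For (i), I would compute, for any fixed $\tau\in\Gamma$,
\begin{align*}
\tau^*(P_\Gamma\omega)=\frac{1}{|\Gamma|}\sum_{\gamma\in\Gamma}\tau^*(\gamma^*\omega)=\frac{1}{|\Gamma|}\sum_{\gamma\in\Gamma}(\gamma\tau)^*\omega=P_\Gamma\omega,
\end{align*}
where the last equality uses that $\gamma\mapsto\gamma\tau$ is a bijection of $\Gamma$. For (ii), if $\omega$ is $\Gamma$-invariant then every summand in the definition equals $\omega$, so $P_\Gamma\omega=\omega$; conversely, if $P_\Gamma\omega=\omega$ then $\omega$ is $\Gamma$-invariant by (i). Property (iii) is nothing but the commutation of $\ud$ with pull-back applied termwise:
\begin{align*}
\ud(P_\Gamma\omega)=\frac{1}{|\Gamma|}\sum_{\gamma\in\Gamma}\ud(\gamma^*\omega)=\frac{1}{|\Gamma|}\sum_{\gamma\in\Gamma}\gamma^*(\ud\omega)=P_\Gamma(\ud\omega).
\end{align*}

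The only point where the isometry hypothesis is really used is (iv). Since each $\gamma\in\Gamma$ acts by a linear isometry on $\R^N$, the induced action on the exterior powers $\bigwedge^k(\R^N)^*$ preserves the comass norm; consequently $\|\gamma^*\omega\|_x=\|\omega\|_{\gamma(x)}$ for all $x\in W$. Applying the triangle inequality for the comass norm at a fixed point yields
\begin{align*}
\|P_\Gamma\omega\|_x\le \frac{1}{|\Gamma|}\sum_{\gamma\in\Gamma}\|\gamma^*\omega\|_x=\frac{1}{|\Gamma|}\sum_{\gamma\in\Gamma}\|\omega\|_{\gamma(x)}\le \|\omega\|_\infty,
\end{align*}
and taking the supremum over $x\in W$ gives (iv). None of these steps is a genuine obstacle; the proof is essentially a packaging of group-averaging with the compatibility of pull-back with $\ud$ and with the comass under isometric actions.
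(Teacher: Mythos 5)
Your proof is correct and follows essentially the same route as the paper's: linearity and continuity from the corresponding properties of pull-back, the re-indexing $\gamma\mapsto\gamma\tau$ for (i), (ii) as an immediate consequence, termwise commutation of $\ud$ with pull-back for (iii), and for (iv) the triangle inequality together with the fact that a linear isometry preserves the comass norm. Your treatment of (iv) is slightly more explicit about where the isometry hypothesis enters, but the argument is the same.
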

\begin{proof}
Linearity and continuity follow from the corresponding properties of the pull-back $\gamma^*\omega$. The identity
\begin{align*}
\gamma_0^*(P_\Gamma\omega)=\frac 1{|\Gamma|}\sum_{\gamma\in\Gamma}\gamma_0^*\gamma^*\omega=\frac 1{|\Gamma|}\sum_{\gamma\in\Gamma}(\gamma\circ\gamma_0)^*\omega=P_\Gamma\omega,\quad \gamma_0\in \Gamma
\end{align*}
proves (i). To see (ii), note that if $\omega$ is $\Gamma$-invariant, then clearly $P_\Gamma\omega=\omega$, while if $P_\Gamma\omega=\omega$, then $\omega$ is $\Gamma$-invariant by (i). 

Finally, (iii) follows from the fact that pull-back and exterior derivative commute:
\[
\ud(P_\Gamma\omega)=\frac 1{|\Gamma|}\sum_{\gamma\in\Gamma}\ud(\gamma^*\omega)=\frac 1{|\Gamma|}\sum_{\gamma\in\Gamma}\gamma^*(\ud\omega)=P_\Gamma(\ud\omega),
\]
and (iv) follows from the estimate
\[
\|P_\Gamma\omega_x\|\le \frac 1{|\Gamma|}\sum_{\gamma\in \Gamma}\|\gamma^*\omega\|_\infty\le \frac 1{|\Gamma|}\sum_{\gamma\in \Gamma}\|\omega\|_\infty\le \|\omega\|_\infty.
\]
\end{proof}
When $\Gamma=S_d$ acts on $(\R^n)^d$ (see Section \ref{sec:almgren-space}), we denote $\bar\omega:=P_{S_d}\omega$, and single out the following corollary.

\begin{corollary}
The map $P=\omega\mapsto \bar\omega:\Omega^k(V^d)\to \Omega^k(\mathcal A_d(V))$ is a linear projection onto $\Omega^k(\mathcal A_d(V))$ satisfying $\ud(P\omega)=P(\ud\omega)$. In particular, smooth $k$-forms on $\mathcal A_d(V)$ form a vector space and are closed under the exterior derivative.
\end{corollary}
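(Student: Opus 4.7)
The plan is to deduce everything from Lemma \ref{lem:proj-onto-invariant-forms} applied to the special case $\Gamma = S_d$ acting on $\R^N=(\R^n)^d$ by coordinate permutations, with $W=V^d$. This action is by linear isometries and preserves $V^d$, so the hypotheses of the lemma are satisfied. Under this specialization, $P_\Gamma$ is exactly the operator $P\colon\omega\mapsto\bar\omega$ appearing in the statement.

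First I would record linearity of $P$, which is the first assertion of Lemma \ref{lem:proj-onto-invariant-forms}. Next, by item (i) of that lemma, $P\omega$ is $S_d$-invariant for every $\omega\in\Omega^k(V^d)$, hence $P\omega\in\Omega^k(\mathcal A_d(V))$ by the definition of smooth $k$-forms on $\mathcal A_d(V)$; this shows the image of $P$ lies in $\Omega^k(\mathcal A_d(V))$. To see that $P$ is a projection (in particular surjective onto $\Omega^k(\mathcal A_d(V))$), I combine (i) and (ii): for any $\omega\in\Omega^k(V^d)$ the form $P\omega$ is $S_d$-invariant by (i), hence $P(P\omega)=P\omega$ by (ii); and if $\omega\in \Omega^k(\mathcal A_d(V))$ already, then $P\omega=\omega$ again by (ii). The intertwining identity $\ud(P\omega)=P(\ud\omega)$ is precisely item (iii).

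Finally, the ``in particular'' assertion is immediate: since $\Omega^k(\mathcal A_d(V))=\im P$ and $P$ is a linear map from the vector space $\Omega^k(V^d)$, its image is a linear subspace, so smooth $k$-forms on $\mathcal A_d(V)$ form a vector space. Closure under the exterior derivative follows from (iii), since for $\omega\in\Omega^k(\mathcal A_d(V))$ we have $\ud\omega=\ud(P\omega)=P(\ud\omega)\in\Omega^{k+1}(\mathcal A_d(V))$. There is no substantive obstacle here; the only thing to be careful about is that the $S_d$-action on $(\R^n)^d$ really is by linear isometries (so that Lemma \ref{lem:proj-onto-invariant-forms} applies verbatim), which is clear from the description in Section \ref{sec:almgren-space}.
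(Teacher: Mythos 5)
Your proposal is correct and matches the paper's intent exactly: the corollary is presented as an immediate specialization of Lemma \ref{lem:proj-onto-invariant-forms} to $\Gamma = S_d$ acting on $(\R^n)^d$, and you have simply spelled out the deduction (linearity, $\operatorname{im} P \subset \Omega^k(\mathcal A_d(V))$ from (i), idempotence and surjectivity from (i)+(ii), intertwining from (iii), and the ``in particular'' assertions as formal consequences). No discrepancy.
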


\subsubsection*{Canonical $n$-form on $\mathcal A_d(\R^n)$} Let $\alpha$ be a smooth $k$-form on an open set $V\subset \R^n$ and consider the trace $\operatorname{tr}_d(\omega)=\operatorname{tr}(\omega)$, which is a $k$-form on the product $V^d$, defined by
\begin{align}\label{eq:trace}
\operatorname{tr}(\alpha)=\sum_j^dP_j^*\alpha,
\end{align}
where $P_j:(\R^n)^d\to \R^n$ is the projection to the $j^\textrm{th}$ factor. It is straightforward to see from \eqref{eq:trace} that $\operatorname{tr}(\alpha)$ is a smooth $k$-form on $\mathcal A_d(V)$. 
\begin{definition}\label{def:natural-n-form}
We define a natural $n$-form $\omega_n$ on $\mathcal A_d(\R^n)$ by $\omega_n:=\operatorname{tr}(\operatorname{vol}_{\R^n})$, where $\operatorname{vol}_{\R^n}$ is the volume form on $\R^n$.
\end{definition}

\begin{lemma}\label{lem:comass-of-natural-n-form}
We have that $\|\omega_n\|=1$ pointwise everywhere.
\end{lemma}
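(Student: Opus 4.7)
The plan is to compute the pointwise comass of $\omega_n$ directly as a constant-coefficient form on $(\R^n)^d$. Since $\omega_n = \sum_{j=1}^d P_j^\ast \operatorname{vol}_{\R^n}$ has constant coefficients in the standard Euclidean coordinates on $(\R^n)^d$, the pointwise comass $\|\omega_n\|_{\bb x}$ is independent of $\bb x$, and it suffices to bound $|\omega_n(\xi)|$ on simple unit $n$-vectors $\xi \in \bigwedge^n(\R^n)^d$ at a single point.

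Write $\xi = v_1 \wedge \cdots \wedge v_n$ with $v_i = (v_i^1, \ldots, v_i^d) \in (\R^n)^d$, and let $A_j$ be the $n \times n$ matrix with columns $v_1^j, \ldots, v_n^j$. A direct expansion of the trace together with the Gram determinant formula gives
\[
\omega_n(v_1, \ldots, v_n) = \sum_{j=1}^d \det A_j, \qquad \|\xi\|^2 = \det\Big(\sum_{j=1}^d A_j^T A_j\Big).
\]
Setting $M_j := A_j^T A_j$ and $S := \sum_j M_j$, the inequality $|\omega_n(\xi)| \le \|\xi\|$ reduces, via the substitution $N_j := S^{-1/2} M_j S^{-1/2}$ (legal since $\|\xi\| = 1$ forces $S$ to be invertible), to the purely algebraic claim
\[
\sum_j \sqrt{\det N_j} \le 1 \quad \text{whenever } N_j \text{ are positive semidefinite } n\times n \text{ matrices with } \sum_j N_j = I_n.
\]

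To close the argument I will invoke Minkowski's determinant inequality for positive semidefinite matrices, which yields $\sum_j (\det N_j)^{1/n} \le \bigl(\det \sum_j N_j\bigr)^{1/n} = 1$. Since $N_j \le I_n$ forces $\det N_j \in [0,1]$, and since the assumption $n \ge 2$ gives $(\det N_j)^{1/2} \le (\det N_j)^{1/n}$, the desired inequality follows. Equality in $\|\omega_n\| \le 1$ is attained by the unit simple $n$-vector corresponding to $A_1 = I_n$ and $A_j = 0$ for $j > 1$, so $\|\omega_n\|_{\bb x} = 1$ for every $\bb x$. The only subtlety is the need for $n \ge 2$, which is genuine: for $n = 1$, $d \ge 2$ the form $\omega_1 = dx^1 + \cdots + dx^d$ has comass $\sqrt d$, so the statement relies on the quasiregular context $n \ge 2$ used throughout the paper.
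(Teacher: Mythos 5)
Your argument is correct, and it takes a genuinely different route from the paper's. The paper fixes a tuple $v_1,\dots,v_n\in(\R^n)^d$ with each $|v_l|\le 1$ (which suffices because the comass is attained on orthonormal frames) and bounds $\sum_j \det A_j$ via Hadamard's inequality followed by the generalized H\"older inequality, using at the end that $\big(\sum_j|v_l^j|^n\big)^{1/n}\le\big(\sum_j|v_l^j|^2\big)^{1/2}$ since $n\ge 2$. You instead work directly with the mass of a simple $n$-vector through the Gram determinant $\|\xi\|^2=\det\big(\sum_j A_j^TA_j\big)$, normalize by $S^{-1/2}$, and invoke Minkowski's determinant inequality $\sum_j(\det N_j)^{1/n}\le\big(\det\sum_j N_j\big)^{1/n}$, finishing with $t^{1/2}\le t^{1/n}$ for $t\in[0,1]$ and $n\ge 2$. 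Both proofs use $n\ge 2$ in the final step; your explicit remark that $\|\omega_1\|=\sqrt d$ for $n=1$ makes that dependence visible and is a worthwhile sanity check. Your reduction is arguably more conceptual in that it isolates a clean matrix inequality, while the paper's is shorter and more elementary; one small step you leave implicit is the triangle inequality $|\sum_j\det A_j|\le\sum_j|\det A_j|=\sum_j\sqrt{\det M_j}$, which is what lets you pass to the PSD formulation, but this is harmless.
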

\begin{proof}
Let $v_l=(v_l^1,\ldots,v_l^d)\in (\R^n)^d$ for $l=1,\ldots,n$ be vectors with $|v_l|^2=\displaystyle\sum_j^d|v_l^j|^2\le 1$. Then, using the Hadamard inequality and the generalized Hölder inequality, we may estimate
\begin{align*}
\omega_n(v_1,\ldots,v_n)&=\sum_j^d v_1^j\wedge\cdots\wedge v_n^j\le \sum_j^d|v_1^j|\cdots|v_n^j|\\
&\le \Big(\sum_j^d|v_1^j|^n\Big)^{1/n}\cdots\Big(\sum_j^d|v_n^j|^n\Big)^{1/n}\le 1.
\end{align*}
Thus $\|\omega_n\|\le 1$ everywhere. On the other hand, choosing $v_l=(e_l,0,\ldots,0)$ yields $\omega_n(v_1,\ldots,v_n)=e_1\wedge\cdots\wedge e_n=1$, implying $\|\omega_n\|\ge 1$. This completes the proof.
\end{proof}

\subsection{Pull-back by multi-valued maps}\label{sec:pull-back} The purpose of this section is to define pull-backs of forms on $\mathcal A_d(\R^n)$ by multi-valued maps, and to prove Theorem \ref{thm:sob-pull-back}. The Sobolev regularity of pull-backs is interesting in its own right, and is also necessary for the proof of Corollary \ref{cor:QR-curve} and other applications. We start by defining the pull-back.

Suppose  $U\subset \R^m$ and $V\subset \R^n$ are open, $h:U\to\mathcal A_d(V)$ is approximately differentiable at $x\in U$, and $h(x)=\bb{y_1,\ldots,y_d}$ and $D_xh=\bb{D_xh_1,\ldots,D_xh_d}$ are labelings such that (i) and (ii) in Theorem \ref{thm:multi-valued-rademacher} hold. By \eqref{eq:S_d-invariant}, the element 
\begin{align*}
\omega_{y_1,\ldots,y_d}\circ(D_xh_1,\ldots,D_xh_d)\in \bigwedge^k\R^m
\end{align*}
is independent of the permutation, i.e.
\[
\omega_{y_{\sigma\inv(1)},\ldots,y_{\sigma\inv(d)}}\circ(D_xh_{\sigma\inv(1)},\ldots,D_xh_{\sigma\inv(d)})=\omega_{y_1,\ldots,y_d}\circ(D_xh_1,\ldots,D_xh_d)
\]
for all $\sigma\in S_d.$ We denote this common element of $\bigwedge^k\R^m$ by $\omega_{h(x)}\circ D_xh$.

\begin{definition}\label{def:pull-back}
Suppose $U\subset \R^m$ and $V\subset \R^n$ are open sets. Let $\omega$ be a smooth $k$-form on $\mathcal A_d(V)$ and $h:U\to \mathcal A_d(V)$ approximately differentiable a.e. in $U$. We define a $k$-form $h^*\omega:U\to \bigwedge^k\R^m$ (defined almost everywhere) by
\[
(h^*\omega)_x=\omega_{h(x)}\circ D_xh.
\]
whenever $h$ is approximately differentiable at $x\in U$.
\end{definition}

We now state the main result in the proof of Theorem \ref{thm:sob-pull-back}, which states that pull-backs by multi-valued Lipschitz maps are flat forms.

\begin{theorem}\label{thm:lip-pullback-flat-form}
Suppose $U\subset\R^m$ is an open set, $f:U\to\mathcal A_d(\R^n)$ is Lipschitz and $\omega\in \Omega^k(\mathcal A_d(\R^n))$. Then $\ud(f^*\omega)=f^*(\ud\omega)$ weakly. In particular, $f^*\omega$ is a flat form on $U$ in the sense of Whitney. 
\end{theorem}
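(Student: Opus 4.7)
My plan is to prove the weak identity $\ud(f^*\omega)=f^*(\ud\omega)$ by induction on $d$; flatness of $f^*\omega$ then follows automatically from the uniform bounds $\|f^*\omega\|_\infty\le(\Lip f)^k\|\omega\|_\infty$ and $\|f^*(\ud\omega)\|_\infty\le(\Lip f)^{k+1}\|\ud\omega\|_\infty$. The base case $d=1$ is the classical statement that Lipschitz pullbacks of smooth forms between Euclidean open sets are flat and that $\ud$ commutes with $f^*$ in the sense of distributions.

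For the inductive step, let $F=f\inv(\diag\mathcal A_d(\R^n))$. On the open set $U\setminus F$, continuity of $f$ groups preimages locally so that around any $x_0\in U\setminus F$ one may write $f=\bb{f_0,f_1}$ on a small neighbourhood $V$, with $f_i:V\to \mathcal A_{d_i}(\R^n)$ Lipschitz, $d_0+d_1=d$ and $d_0,d_1\ge 1$. Applying Lemma \ref{lem:proj-onto-invariant-forms} with $\Gamma=S_{d_0}\times S_{d_1}\le S_d$, the $S_d$-invariant form $\omega$ agrees with $P_\Gamma\omega$ on the image of $(f_0,f_1)$, so $f^*\omega$ decomposes (after a standard Künneth-type expansion of a $\Gamma$-invariant form on the product of the two factors) as a finite sum of wedge products $f_0^*\alpha\wedge f_1^*\beta$ with $\alpha,\beta$ smooth invariant forms on $\mathcal A_{d_0}(\R^n)$ and $\mathcal A_{d_1}(\R^n)$. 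The Leibniz rule together with the inductive hypothesis applied to $f_0$ and $f_1$ then gives the identity weakly on $V$, and a partition of unity promotes it to all of $U\setminus F$. On $F$ itself, the observation $f|_F=d\bb{b\circ f}|_F$ from the excerpt, combined with Theorem \ref{thm:multi-valued-rademacher}(ii) applied at points of Lebesgue density of $F$, forces $D_xf=d\bb{D_x(b\circ f)}$ almost everywhere on $F$, so $f^*\omega$ coincides pointwise a.e.\ on $F$ with the single-valued pullback $(b\circ f)^*(\operatorname{tr}\omega)$, which is flat by the base case.

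The main obstacle is that $F$ is merely closed, so the pointwise information on $F$ alone does not yield a distributional identity across $\partial F$. Following the outline in the introduction, I would construct an approximating family $\{f_\varepsilon\}_{\varepsilon>0}$ of Lipschitz maps $U\to \mathcal A_d(\R^n)$ equal to $d\bb{b\circ f}$ on an $\varepsilon$-neighbourhood of $F$, equal to $f$ outside a $2\varepsilon$-neighbourhood, and a linear interpolation in the Almgren metric in between. The geometric input ensuring a uniform Lipschitz constant is that $d_\mathcal A(\bb{y_1,\ldots,y_d},d\bb{\bar y})$ is controlled by the diameter of $\{y_j\}$, which in turn is controlled by $d_\mathcal A(\bb{y_1,\ldots,y_d},\diag\mathcal A_d(\R^n))$; this allows the interpolation between $f$ and $d\bb{b\circ f}$ to be carried out without blowing up the Lipschitz constant. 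Each $f_\varepsilon$ is now, near every point, \emph{either} a single-valued Lipschitz $d$-fold map, to which the base case applies, \emph{or} has non-diagonal image, to which the decomposition argument of the preceding paragraph applies; hence $\ud(f_\varepsilon^*\omega)=f_\varepsilon^*(\ud\omega)$ holds weakly on all of $U$. Passing $\varepsilon\to 0$ by dominated convergence, using uniform $L^\infty$ bounds on the pullbacks and almost everywhere convergence of the approximate differentials $Df_\varepsilon\to Df$ (which collapses to $d\bb{D(b\circ f)}$ on $F$ by construction), completes the proof. The delicate steps are (a) the uniform Lipschitz control on the interpolation $f_\varepsilon$ in the Almgren metric, and (b) the almost-everywhere convergence of differentials on $F$, which relies on density-one points of $F$ together with Theorem \ref{thm:multi-valued-rademacher}(ii).
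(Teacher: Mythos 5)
Your proposal follows essentially the same strategy as the paper: induction on $d$, local decomposition on $U\setminus F$ via Lemma~\ref{lem:proj-onto-invariant-forms} and a tensor-product/Leibniz argument (this is Proposition~\ref{prop:flat-form}), a cutoff interpolation between $f$ and $d\bb{b\circ f}$ near $F$ with uniform Lipschitz control (this is Proposition~\ref{prop:multi-valued-approx}), and a limiting argument. The one genuine deviation is that you parametrize the interpolation by domain-side neighbourhoods $B(F,\varepsilon)$, whereas the paper uses the image-side set $F_\varepsilon=\{x: d_\mathcal A(f(x),d\bb{b(f(x))})<\varepsilon\}$; both work, since the barycenter is the nearest diagonal point so $B(F,\varepsilon)\subset F_{2L\varepsilon}$, but the paper's choice gives the bound $|b(f)-f_j|\lesssim L\varepsilon$ on $\spt\eta_\varepsilon$ directly without routing through the Lipschitz constant of $f$. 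A cosmetic remark: ``linear interpolation in the Almgren metric'' should read componentwise affine interpolation $(f_\varepsilon)_j=(1-\eta_\varepsilon)f_j+\eta_\varepsilon b(f)$ in $\R^n$ (geodesic interpolation in $\mathcal A_d$ would be needlessly delicate and is not what makes the decomposition outside the diagonal survive); likewise the restriction of $\omega$ to the diagonal is $\Delta^*\omega$ (pullback by the diagonal embedding), not $\operatorname{tr}\omega$.
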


Theorem \ref{thm:sob-pull-back} follows from Theorem \ref{thm:lip-pullback-flat-form} by a Lusin-type approximation, cf. Proposition \ref{prop:lusin-approx}. The proof of Theorem \ref{thm:lip-pullback-flat-form} is more involved; it uses local decompositions of multi-valued functions, the density of tensor products of differential forms, and careful approximation arguments.

We begin with a simple estimate of the comass of the pull-back.

\begin{lemma}\label{lem:pullback-est}
Suppose $U\subset \R^m$ and $V\subset \R^n$ are open sets, $\omega$ is a smooth $k$-form on $\mathcal A_d(V)$ and $h:U\to \mathcal A_d(V)$ a.e. approximately differentiable. Then $h^*\omega$ is a measurable $k$-form on $U$ and 
\begin{align*}
\|h^*\omega\|_x\le \|D_xh\|_x^k\|\omega_{h(x)}\|\quad\textrm{a.e. }x\in U.
\end{align*}
\end{lemma}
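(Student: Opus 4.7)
The plan is to separate the proof into the pointwise comass estimate, which follows from an elementary Hadamard-type bound for $k$-covectors on $(\R^n)^d$, and a measurability argument based on the $S_d$-invariance of $\omega$ together with local decomposability of Lipschitz multi-valued maps.

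For the estimate I would fix $x \in U$ where $h$ is approximately differentiable and pick labelings $h(x)=\bb{y_1,\ldots,y_d}$ and $D_xh=\bb{L_1,\ldots,L_d}$ as in Theorem \ref{thm:multi-valued-rademacher}. Assembling the components into the linear map $L=(L_1,\ldots,L_d)\colon \R^m \to (\R^n)^d$, its Euclidean operator norm coincides with $\|D_xh\|$, since $|Lv|^2_{(\R^n)^d}=\sum_j|L_jv|^2$. For any $v_1,\ldots,v_k \in \R^m$ the definition of the pull-back and the unambiguity provided by \eqref{eq:S_d-invariant} give
\[
(h^*\omega)_x(v_1,\ldots,v_k)=\omega_{(y_1,\ldots,y_d)}(Lv_1,\ldots,Lv_k)\le \|\omega_{h(x)}\|\prod_{i=1}^k|Lv_i|\le \|\omega_{h(x)}\|\,\|D_xh\|^k\prod_{i=1}^k|v_i|,
\]
where the first inequality is the standard comass bound applied to the simple $k$-vector $Lv_1\wedge\cdots\wedge Lv_k\in \bigwedge^k(\R^n)^d$ (together with Hadamard's inequality for its mass), and the second is the operator-norm estimate for $L$. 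Taking the supremum over orthonormal $k$-tuples $(v_1,\ldots,v_k)$ in $\R^m$ yields the stated comass inequality.

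For measurability, the $S_d$-invariance \eqref{eq:S_d-invariant} first of all guarantees that $\omega_{h(x)}\circ D_xh$ is well-defined a.e.\ independently of how the components of $h(x)$ and $D_xh$ are ordered. To check the measurability of $x\mapsto (h^*\omega)_x$, I would exhaust $U$ up to a null set by measurable pieces on which $h$ agrees with a Lipschitz approximation $h_\lambda$ supplied by Proposition \ref{prop:lusin-approx}; on such pieces the approximate differential of $h$ equals the classical differential of $h_\lambda$, reducing to the Lipschitz case. For a Lipschitz multi-valued map, standard local decomposition results (cf.\ \cite[Proposition 1.6]{del11}) express $h_\lambda$ near a point where all $d$ values are distinct as an unordered tuple of Lipschitz single-valued branches, on which $(h_\lambda^*\omega)_x$ becomes a manifest composition of measurable functions. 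The case where some of the $y_j$ coincide is handled by stratifying by the coincidence pattern of the components of $h_\lambda(x)$: condition (ii) of Theorem \ref{thm:multi-valued-rademacher} forces the corresponding differentials to coincide as well, and a local decomposition on each stratum again produces a measurable formula.

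The main obstacle, modest in this lemma but ultimately central to Theorem \ref{thm:lip-pullback-flat-form}, is this diagonal bookkeeping; the estimate itself is nothing more than Hadamard's inequality transplanted to the Almgren setting.
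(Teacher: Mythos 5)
Your comass estimate is exactly the paper's argument: assemble $L=(L_1,\ldots,L_d)$, apply Hadamard to the simple $k$-vector $Lv_1\wedge\cdots\wedge Lv_k$, then the operator-norm bound, and take the supremum over orthonormal $k$-tuples; the paper's own proof is the same computation plus a bare citation for measurability of $Dh$, which your Lusin-plus-decomposition argument fleshes out in the same spirit. Two small imprecisions worth fixing, neither of which affects the conclusion: the Euclidean operator norm of $L$ satisfies $\|L\|_{\mathrm{op}}\le\|D_xh\|$ (with the convention $\|D_xh\|^2=\sum_j\|L_j\|^2$, used e.g.\ in the proof of Theorem~\ref{thm:QR-curve}) but is not equal to it in general, and that is all the second inequality actually uses; and Proposition~\ref{prop:lusin-approx} is stated for Sobolev maps, so in the merely a.e.\ approximately differentiable setting you should instead invoke the Federer-type Lusin decomposition for approximately differentiable multi-valued maps (an exhaustion of $U$ up to a null set by measurable pieces on which $h$ is Lipschitz), which gives the same reduction.
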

\begin{remark}
In particular $h^*\omega\in L^{p/k}_{loc}(U)$ if $h\in W^{1,p}_{loc}(U,\mathcal A_d(V))$.
\end{remark}
\begin{proof}
The approximate differentials $Dh_1,\ldots,Dh_d$ are measurable. For all $x\in U$ where $h$ is approximately differentiable, we have
\begin{align*}
\|h^*\omega\|_x&=\max\{\omega_{h(x)}(Dh(v_1),\ldots,Dh (v_d)):\ |v_1|,\ldots,|v_d|\le 1\}\\
&\le \|D_xh\|_{op}^k\|\omega_{h(x)}\|,
\end{align*}
completing the proof.
\end{proof}

For the next lemma, recall the definition of the tensor product of differential forms. For (open) manifolds $U_0,U_1$, the projections $P_i:U_0\times U_1\to U_i$ induce linear maps $P_i^*:\Omega^l(U_i)\to \Omega^l(U_0\times U_1)$ which can be combined to obtain a bi-linear map
\[
P^*:\Omega^l(U_0)\times \Omega^r(U_1)\to \Omega^k(U_0\times U_1),\quad (\omega_0,\omega_1)\mapsto P_0^*\omega_0\wedge P_1^*\omega_1
\]
whenever $l+r=k$. The associated linear map $P^*:\Omega^l(U_0)\otimes \Omega^r(U_1)\to \Omega^k(U_0\times U_1)$ is injective, and 
\begin{align*}
P^*:\bigcup_{r+l=k}\Omega^l(U_0)\otimes \Omega^r(U_1)\to \Omega^k(U_0\times U_1)
\end{align*}
has dense range in the natural Frechet topology on $\Omega^k(U_0\times U_1)$. We denote $\omega_0\otimes\omega_1:=P^*(\omega_0,\omega_1)$.

Note that, if $\omega_i\in \Omega^{k_i}(\mathcal A_{d_i}(\R^n))$ for $i=0,1$, and we denote $k=k_0+k_1$, $d=d_0+d_1$, the tensor product $\omega_0\otimes \omega_1\in \Omega^k((\R^n)^d)$ is not $S_d$-invariant but merely $S_{d_0}\times S_{d_1}$-invariant.

\begin{lemma}
Let $\omega_i\in \Omega^{k_i}(V^{d_i})$, $i=0,1$, and $k=k_0+k_1$, $d=d_0+d_1$. Then 
\begin{align}\label{eq:otimes}
P_{S_{d_0}\times S_{d_1}}(\omega_0\otimes \omega_1)=(P_{S_{d_0}}\omega_0)\otimes(P_{S_{d_1}}\omega_1).
\end{align}
In particular, if $\omega_i$ is a $k_i$-form on $\mathcal A_{d_i}(V)$ for each $i=0,1$, then $\omega_0\otimes \omega_1$ is $S_{d_0}\times S_{d_1}$-invariant.
\end{lemma}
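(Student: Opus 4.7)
The plan is to unwind the definition of $P_\Gamma$ for the product group $\Gamma = S_{d_0}\times S_{d_1}$, which acts on $(\R^n)^d = (\R^n)^{d_0}\times (\R^n)^{d_1}$ factorwise by $(\sigma_0,\sigma_1)\cdot(x_0,x_1) = (\sigma_0\cdot x_0,\sigma_1\cdot x_1)$, and exploit the compatibility of this factorwise action with the tensor product.

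First I would note that the projections $P_i\colon (\R^n)^d\to (\R^n)^{d_i}$ satisfy $P_i\circ (\sigma_0,\sigma_1) = \sigma_i\circ P_i$, so for any $\alpha\in \Omega^{k_i}((\R^n)^{d_i})$,
\[
(\sigma_0,\sigma_1)^*P_i^*\alpha = P_i^*(\sigma_i^*\alpha).
\]
Combined with the fact that pull-backs commute with the wedge product, this gives
\[
(\sigma_0,\sigma_1)^*(\omega_0\otimes \omega_1) = (\sigma_0^*\omega_0)\otimes(\sigma_1^*\omega_1).
\]
Summing over $(\sigma_0,\sigma_1)\in S_{d_0}\times S_{d_1}$ and normalizing by $|S_{d_0}\times S_{d_1}| = d_0!\,d_1!$, and then using bilinearity of $(\alpha,\beta)\mapsto \alpha\otimes\beta$, the sum factors:
\[
P_{S_{d_0}\times S_{d_1}}(\omega_0\otimes\omega_1) = \Bigl(\tfrac{1}{d_0!}\sum_{\sigma_0}\sigma_0^*\omega_0\Bigr)\otimes\Bigl(\tfrac{1}{d_1!}\sum_{\sigma_1}\sigma_1^*\omega_1\Bigr) = (P_{S_{d_0}}\omega_0)\otimes(P_{S_{d_1}}\omega_1),
\]
which is \eqref{eq:otimes}.

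For the ``in particular'' statement, if each $\omega_i$ is $S_{d_i}$-invariant, then Lemma \ref{lem:proj-onto-invariant-forms}(ii) applied to the group $S_{d_i}$ gives $P_{S_{d_i}}\omega_i = \omega_i$ for $i=0,1$. Plugging into \eqref{eq:otimes} yields $P_{S_{d_0}\times S_{d_1}}(\omega_0\otimes\omega_1) = \omega_0\otimes\omega_1$, and by another application of Lemma \ref{lem:proj-onto-invariant-forms}(ii) (this time for $\Gamma = S_{d_0}\times S_{d_1}$) this is equivalent to $S_{d_0}\times S_{d_1}$-invariance of $\omega_0\otimes\omega_1$.

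No step looks genuinely difficult here; the only place to be careful is the identity $P_i\circ(\sigma_0,\sigma_1) = \sigma_i\circ P_i$, which is just the fact that the action of the product group is defined to be factorwise, and the linearity argument that lets the average over $(\sigma_0,\sigma_1)$ split as a product of averages. Everything else is a direct application of Lemma \ref{lem:proj-onto-invariant-forms}.
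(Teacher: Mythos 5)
Your proof is correct and follows essentially the same route as the paper: both establish $(\sigma_0,\sigma_1)^*(\omega_0\otimes\omega_1)=(\sigma_0^*\omega_0)\otimes(\sigma_1^*\omega_1)$ and then factor the average; you invoke the general identity that pull-back commutes with wedge, whereas the paper expands the alternating sum explicitly, but this is only a presentational difference.
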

\begin{proof}
The last claim follows directly from \eqref{eq:otimes} and Lemma \ref{lem:proj-onto-invariant-forms}. To show \eqref{eq:otimes}, let $(\sigma_0,\sigma_1)\in S_{d_0}\times S_{d_1}$. For any $v\in (\R^n)^d$, $(\sigma_0,\sigma_1)v=:v^{(\sigma_0,\sigma_1)}$ satisfies $P_0(v^{(\sigma_0,\sigma_1)})=\sigma_0(P_0v)$ and $P_1(v^{(\sigma_0,\sigma_1)})=\sigma_1(P_1v)$. Thus
\begin{align*}
(\sigma_0,\sigma_1)^*(P_0^*\omega_0&\wedge P_1^*\omega_1)(v_1,\ldots,v_k)=(P_0^*\omega_0\wedge P_1^*\omega_1)(v_1^{(\sigma_0,\sigma_1)},\ldots,v_k^{(\sigma_0,\sigma_1)})\\
=\frac 1{k!}\sum_{\theta\in S_k}\sgn(\theta)&(P_0^*\omega_0)(v_{\theta(1)}^{(\sigma_0,\sigma_1)},\ldots,v_{\theta(k_0)}^{(\sigma_0,\sigma_1)})(P_1^*\omega_1)(v_{\theta(k_0+1)}^{(\sigma_0,\sigma_1)},\ldots,v_{\theta(k)}^{(\sigma_0,\sigma_1)})\\
=\frac 1{k!}\sum_{\theta\in S_k}\sgn(\theta)&\omega_0(\sigma_0(P_0v_{\theta(1)}),\ldots , \sigma_0(P_0v_{\theta(k_0)}))\\
&\times\omega_1(\sigma_1(P_1v_{\theta(k_0+1)}),\ldots,\sigma_1(P_1v_{\theta(k)}))\\
=P_0^*(\sigma_0^*\omega_0)\wedge P_1&^*(\sigma_1^*\omega_1)(v_1,\ldots,v_k)
\end{align*}
for all $v_1,\ldots,v_k\in (\R^n)^d$. Now we can calculate
\begin{align*}
P_{S_{d_0}\times S_{d_1}}(\omega_0\otimes \omega_1)=&P_{S_{d_0}\times S_{d_1}}(P_0^*\omega_0\wedge P_1^*\omega_1)\\
=&\frac{1}{|S_{d_0}||S_{d_1}|}\sum_{\sigma_0\in S_{d_0}}\sum_{\sigma_1\in S_{d_1}}(\sigma_0,\sigma_1)^*(P_0^*\omega_0\wedge P_1^*\omega_1)\\
=&\frac{1}{|S_{d_0}||S_{d_1}|}\sum_{\sigma_0\in S_{d_0}}\sum_{\sigma_1\in S_{d_1}}P_0^*(\sigma_0^*\omega_0)\wedge P_1^*(\sigma_1^*\omega_1)\\
=&P_0^*(P_{S_{d_0}}\omega_0)\wedge P_1^*(P_{S_{d_1}}\omega_1)=(P_{S_{d_0}}\omega_0)\otimes (P_{S_{d_1}}\omega_1).
\end{align*}
\end{proof}

In the next lemma we show that $S_{d_0}\times S_{d_1}$-invariant forms have a well-defined pull back by multivalued functions $f=\bb{f_0,f_1}$, where $f_i:U\to \mathcal A_{d_i}(\R^n)$ is Lipschitz for each $i=0,1$. We emphasize that $S_{d_0}\times S_{d_1}$-invariant forms \emph{cannot} be pulled back by general multivalued Lipchitz maps $f:U\to \mathcal A_d(\R^n)$, that is, we are using the additional information of having a decomposition in Lemma \ref{lem:pull-back-of-decomp}.

\begin{lemma}\label{lem:pull-back-of-decomp}
Suppose $f_i:U\to \mathcal A_{d_i}(V)$ is Lipschitz ($i=0,1$), $d=d_0+d_1$, and $\omega$ is a $S_{d_0}\times S_{d_1}$-invariant form on $V^d$. Then the pull-back $\bb{f_0,f_1}^*\omega$ is a well-defined $k$-form on $U$. Moreover, 
\begin{align*}
\bb{f_0,f_1}^*(\omega_0\otimes\omega_1)=f_0^*\omega_0\wedge f_1^*\omega_1
\end{align*}
whenever $\omega_i\in \Omega^{k_i}(\mathcal A_{d_i}(V))$ ($i=0,1$) and $k=k_0+k_1$.
\end{lemma}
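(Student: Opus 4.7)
My plan is to establish well-definedness of the pull-back by matching the labeling ambiguity of a decomposed multi-valued map with the invariance group of $\omega$, and then verify the tensor product formula by a pointwise wedge-product calculation. The main subtlety is the labeling ambiguity of multi-valued differentials.

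By Theorem \ref{thm:multi-valued-rademacher} applied to each $f_i$ separately, for a.e.\ $x\in U$ there exist labelings $f_i(x)=\bb{y_1^{(i)},\ldots,y_{d_i}^{(i)}}$ and $D_xf_i=\bb{L_1^{(i)},\ldots,L_{d_i}^{(i)}}$ satisfying conditions (i) and (ii), and any two such labelings differ by an element of $S_{d_i}$. Concatenating yields a representative
\[
y=(y_1^{(0)},\ldots,y_{d_0}^{(0)},y_1^{(1)},\ldots,y_{d_1}^{(1)})\in V^d,\qquad L=(L_1^{(0)},\ldots,L_{d_1}^{(1)})\colon \R^m\to (\R^n)^d,
\]
and the total ambiguity in these choices is precisely the subgroup $S_{d_0}\times S_{d_1}\le S_d$ acting separately on each block. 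Define
\[
(\bb{f_0,f_1}^*\omega)_x(v_1,\ldots,v_k):=\omega_y(L(v_1),\ldots,L(v_k)).
\]
Since $\omega$ is $S_{d_0}\times S_{d_1}$-invariant by hypothesis, the right-hand side is independent of the representatives, producing a well-defined measurable $k$-form on $U$. Note that when some $y_j^{(0)}$ happens to coincide with some $y_k^{(1)}$, the decomposition $\bb{f_0,f_1}$ still records which entries come from which factor, so no additional $S_d$-identification is forced; condition (ii) of Theorem \ref{thm:multi-valued-rademacher} ensures that coinciding $y$-values carry matching $L$'s, which sidesteps the ambiguity of \cite[Remark 1.11]{del11}.

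For the tensor product formula, recall that by definition $\omega_0\otimes\omega_1=P_0^*\omega_0\wedge P_1^*\omega_1$, where $P_i\colon V^d\to V^{d_i}$ is the coordinate projection. At a joint differentiability point $x$, projecting the representative above gives $P_i\circ\bb{f_0,f_1}=f_i$ with differential $Df_i$, so the pairing of $L(v)$ with $P_i^*\omega_i$ reduces to the pairing defining $f_i^*\omega_i$. Since the wedge product of differential forms commutes with pull-back pointwise (a purely algebraic identity in $\bigwedge^\bullet\R^m$), we obtain
\[
\bb{f_0,f_1}^*(P_0^*\omega_0\wedge P_1^*\omega_1)_x=f_0^*\omega_0\big|_x\wedge f_1^*\omega_1\big|_x,
\]
which yields the claim. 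The principal obstacle is the well-definedness argument; the tensor product identity then follows by an essentially routine unravelling of definitions.
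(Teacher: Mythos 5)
Your proof takes essentially the same approach as the paper: fix representatives of the decomposed differential, observe that the labeling ambiguity is exactly $S_{d_0}\times S_{d_1}$ (not all of $S_d$) because the decomposition $\bb{f_0,f_1}$ keeps the two blocks separate, and invoke the assumed invariance of $\omega$; the tensor-product identity is then the same pointwise wedge calculation the paper carries out explicitly. One minor inaccuracy worth flagging: your aside that ``condition (ii) of Theorem \ref{thm:multi-valued-rademacher} ensures that coinciding $y$-values carry matching $L$'s'' is not quite right in this context — condition (ii) applies within a single multi-valued map, so it forces $L_i^{(0)}=L_j^{(0)}$ when $f_0^i(x)=f_0^j(x)$, but it does \emph{not} force $L_i^{(0)}=L_j^{(1)}$ when $f_0^i(x)=f_1^j(x)$. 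This does not affect the argument, since (as you correctly say first) the decomposition remembers which entries belong to which factor, so no such cross-block identification is ever needed and $S_{d_0}\times S_{d_1}$-invariance alone suffices.
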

\begin{proof}
Let $x\in U$ be such that $f_0,f_1$ are differentiable at $x$, and let $f_i^1(x),\ldots, f_i^{d_i}(x)$ and $D_xf_i^1,\ldots,D_xf_i^{d_i}$ be as in Theorem \ref{thm:multi-valued-rademacher} (i) and (ii). (In particular, note that $f=\bb{f_0,f_1}$ is differentiable at $x$ and $Df=\bb{D_xf_0^1,\ldots,D_xf_0^{d_0},D_xf_1^1,\ldots,D_xf_1^{d_1}}$.) Given $(\sigma_0,\sigma_1)\in S_{d_0}\times S_{d_1}$, denote
\[
D_xf^{(\sigma_0,\sigma_1)}=(D_xf_0^{\sigma_0\inv(1)},\ldots D_xf_0^{\sigma_0\inv(d_0)},D_xf_1^{\sigma_1\inv(1)},\ldots,D_xf_1^{\sigma_1\inv(d_1)})
\]
and similarly 
\[
(f_0^{\sigma_0}(x),f_1^{\sigma_1}(x))=(f_0^{\sigma_0\inv(1)}(x),\ldots,f_0^{\sigma_0\inv(d_0)},f_1^{\sigma_1\inv(1)},\ldots,f_1^{\sigma_1\inv(d_1)}).
\]
We define
\begin{align*}
    (f^*\omega)_x(v_1,\ldots,v_k)=\omega_{(f_0^{\sigma_0}(x),f_1^{\sigma_1}(x))}(D_xf^{(\sigma_0,\sigma_1)}v_1,\ldots,D_xf^{(\sigma_0,\sigma_1)}v_k)
\end{align*}
for $v_1,\ldots,v_k\in T_xU=\R^m$. By the $S_{d_0}\times S_{d_1}$-invariance of $\omega$, this expression is independent of $(\sigma_0,\sigma_1)$ and gives a well-defined element in $\bigwedge^k\R^m$, which we denote by $(f^*\omega)_x=\omega_{(f_0(x),f_1(x))}\circ(D_xf_0,D_xf_1)$.

In particular, if $\omega_i\in \Omega^{k_i}(\mathcal A_{d_i}(V))$, $i=0,1$, then $\omega_0\otimes \omega_1$ is $S_{d_0}\times S_{d_1}$-invariant. We may calculate
\begin{align*}
&f^*(\omega_0\otimes\omega_1)(v_1,\ldots,v_k)=(\omega_0\otimes\omega_1)((D_xf_0,D_xf_1) v_1,\ldots,(D_xf_0,D_xf_1) v_k)\\
=&\frac 1{k!}\sum_{\theta\in S_k}\sgn(\theta)\omega_0(Df_0v_{\theta(1)},\ldots,Df_0v_{\theta(k_0)})\omega_1(Df_1v_{\theta(k_0+1)},\ldots,Df_1v_{\theta(k)})\\
=&(f_0^*\omega_0)\wedge(f_1^*\omega_1)(v_1,\ldots,v_k)
\end{align*}
for every $v_1,\ldots,v_k\in \R^m$. This completes the proof.
\end{proof}

\begin{proposition}\label{prop:flat-form}
Let $d_0,d_1\ge 1$ be natural numbers, $d:=d_0+d_1$, and let $U\subset \R^m$ be an open set. If $f_i:U\to \mathcal A_{d_i}(\R^n)$ are Lipschitz functions such that $\ud(f_i^*\omega)=f_i^*(\ud\omega)$ weakly for all $l$-forms $\omega\in \Omega^l(\mathcal A_{d_i}(\R^n))$, $l\le k$ ($i=0,1$), then $\ud(\bb{f_0,f_1}^*\omega)=\bb{f_0,f_1}^*(\ud\omega)$ weakly for all $k$-forms $\omega\in \Omega^k(\mathcal A_d(\R^n))$.
\end{proposition}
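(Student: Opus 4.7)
The overall strategy is to approximate $\omega$ by sums of tensor products $\tilde\alpha\otimes\tilde\beta$ with $\tilde\alpha,\tilde\beta$ smooth forms on $\mathcal A_{d_0}(\R^n)$ and $\mathcal A_{d_1}(\R^n)$ respectively, establish the weak identity for each such summand using Lemma \ref{lem:pull-back-of-decomp} and the Leibniz rule, and then pass to the limit. Since $\omega$ is $S_d$-invariant it is in particular $S_{d_0}\times S_{d_1}$-invariant, so the pull-back $\bb{f_0,f_1}^*\omega$ is well-defined by Lemma \ref{lem:pull-back-of-decomp} and agrees with the pull-back defined in Section \ref{sec:pull-back} (both being pointwise formulas).

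For the approximation step I would use that tensor products have dense image in $\Omega^k(V^d)$ in the natural Frechet topology, as noted in the paper. Starting from an approximation $\omega_\ell=\sum_j\alpha_j^\ell\otimes\beta_j^\ell\to\omega$ and applying $P_{S_{d_0}\times S_{d_1}}$, the identity $\omega = P_{S_{d_0}\times S_{d_1}}\omega$ together with Lemma \ref{lem:proj-onto-invariant-forms}(iv) and \eqref{eq:otimes} allows me to assume that each $\alpha_j^\ell$ is $S_{d_0}$-invariant and each $\beta_j^\ell$ is $S_{d_1}$-invariant, while keeping $\omega_\ell\to\omega$ and $\ud\omega_\ell\to\ud\omega$ locally uniformly.

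For a single tensor product $\tilde\alpha\otimes\tilde\beta$ with $\tilde\alpha\in\Omega^{k_0}(\mathcal A_{d_0}(\R^n))$, $\tilde\beta\in\Omega^{k_1}(\mathcal A_{d_1}(\R^n))$, Lemma \ref{lem:pull-back-of-decomp} gives $\bb{f_0,f_1}^*(\tilde\alpha\otimes\tilde\beta)=f_0^*\tilde\alpha\wedge f_1^*\tilde\beta$. The hypothesis says each of $f_0^*\tilde\alpha$ and $f_1^*\tilde\beta$ is bounded measurable with bounded measurable weak exterior derivative given by $f_i^*(\ud\tilde\alpha)$, $f_i^*(\ud\tilde\beta)$ respectively, i.e.\ both are flat in the sense of Whitney. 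The Leibniz rule for weak exterior derivatives on wedge products of such flat forms, which can be verified via mollification (convolve each factor and pass to the limit in $L^1_{\mathrm{loc}}$, using that mollification commutes with $\ud$), yields
\begin{align*}
\ud(f_0^*\tilde\alpha\wedge f_1^*\tilde\beta)=f_0^*(\ud\tilde\alpha)\wedge f_1^*\tilde\beta+(-1)^{k_0}f_0^*\tilde\alpha\wedge f_1^*(\ud\tilde\beta)
\end{align*}
weakly. Applying Lemma \ref{lem:pull-back-of-decomp} in reverse to each term on the right and using that $\ud(\tilde\alpha\otimes\tilde\beta)=\ud\tilde\alpha\otimes\tilde\beta+(-1)^{k_0}\tilde\alpha\otimes\ud\tilde\beta$ identifies the right-hand side as $\bb{f_0,f_1}^*(\ud(\tilde\alpha\otimes\tilde\beta))$. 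Summing over $j$ yields the weak identity for $\omega_\ell$.

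Finally, for any compactly supported test $(m-k-1)$-form $\eta$ on $U$, I test the weak identity for $\omega_\ell$ against $\eta$. The uniform bound $|D_x\bb{f_0,f_1}|\leq\Lip(f_0)+\Lip(f_1)$ combined with locally uniform convergence $\omega_\ell\to\omega$, $\ud\omega_\ell\to\ud\omega$ yields, via Lemma \ref{lem:pullback-est} and dominated convergence, that $\bb{f_0,f_1}^*\omega_\ell\to\bb{f_0,f_1}^*\omega$ and $\bb{f_0,f_1}^*(\ud\omega_\ell)\to\bb{f_0,f_1}^*(\ud\omega)$ in $L^1_{\mathrm{loc}}$, so both sides converge. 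The main obstacle is the Leibniz rule for weak exterior derivatives of Lipschitz pull-backs of smooth forms; I expect the mollification argument above to suffice, but an alternative would be to work directly with the pointwise almost everywhere differentials from Theorem \ref{thm:multi-valued-rademacher} and appeal to the classical chain rule on the set of full measure where both $f_0$ and $f_1$ are differentiable.
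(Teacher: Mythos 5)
Your proposal is correct and follows essentially the same route as the paper's proof: approximate $\omega$ by tensor products, project via $P_{S_{d_0}\times S_{d_1}}$ (using $S_d$-invariance of $\omega$) to reduce to $S_{d_0}\times S_{d_1}$-invariant tensor products, apply Lemma \ref{lem:pull-back-of-decomp} and the hypothesis together with the Leibniz rule for flat forms, then pass to the limit by dominated convergence. The only difference is that you make explicit the need to justify the Leibniz rule for wedge products of flat forms (the paper uses it silently), which is indeed a standard mollification fact, so both proofs are complete.
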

\begin{proof}
Denote $f:=\bb{f_0,f_1}$, and let $\omega\in \Omega^k(\mathcal A_d(\R^n))$. Let 
\[
(\omega_j)\subset \bigcup_{r+l=k}\Omega^l((\R^n)^{d_0})\otimes \Omega^r((\R^n)^{d_1})
\]
be a sequence such that $(\omega_j)_x\to \omega_x$ and $(\ud\omega_j)_x\to (\ud\omega)_x$ for all $x\in (\R^n)^d$ and $\sup_j\max\{\|\omega_j\|_\infty,\|\ud\omega_j\|_\infty\}<\infty$.  Then
\begin{align*}
\tilde\omega_j&:=P_{S_{d_0}\times S_{d_1}}\omega_j\to P_{S_{d_0}\times S_{d_1}}\omega=\omega,\\ 
\ud\tilde\omega_j&=P_{S_{d_0}\times S_{d_1}}(\ud\omega_j)\to P_{S_{d_0}\times S_{d_1}}(\ud\omega)=\ud\omega
\end{align*}
and $\sup_j\max\{\|\tilde \omega_j\|_\infty,\|\ud\tilde \omega_j\|_\infty\}<\infty$, cf. Lemma \ref{lem:proj-onto-invariant-forms}. By Lemma \ref{lem:pull-back-of-decomp} each $\tilde\omega_j$ is a sum of tensor products $\omega_0\otimes\omega_1$ where $\omega_i$ is $S_{d_i}$-invariant and $f^*(\omega_0\otimes\omega_1)=(f_0^*\omega_0)\wedge(f^*_1\omega_1)$. Together with the assumption that $f_i^*(\ud\omega_i)=\ud(f^*_i\omega)$ weakly for $S_{d_i}$-invariant forms $\omega$, this yields 
\begin{align*}
f^*(\ud(\omega_0\otimes\omega_1))&=f^*(\ud\omega_0\otimes \omega_1+(-1)^{\deg\omega_0}\omega_0\otimes\ud\omega_1)\\
&=f^*_0(\ud\omega_0)\wedge(f^*_1\omega_1)+(-1)^{\deg\omega_0}(f_0^*\omega_0)\wedge (f_1^*(\ud\omega_1))\\
&=\ud(f_0^*\omega_0)\wedge(f_1^*\omega_1)+(-1)^{\deg\omega_0}(f_0^*\omega_0)\wedge\ud(f_1^*\omega_1)\\
&=\ud(f_0^*\omega_0\wedge f_1^*\omega_1)=\ud f^*(\omega_0\otimes\omega_1)
\end{align*}
weakly. Consequently $f^*(\ud\tilde\omega_j)=\ud(f^*\tilde\omega_j)$ weakly. Now the pointwise convergence $f^*\tilde\omega_j\to f^*\omega$, $f^*(\ud\tilde\omega_j)\to f^*(\ud\omega)$ together with the dominated convergence theorem yield
\begin{align*}
\int_U \alpha\wedge f^*(\ud\omega)&=\lim_{j\to\infty}\int_U \alpha\wedge f^*(\ud\tilde\omega_j)=(-1)^{k+1}\lim_{j\to\infty}\int_U\ud\alpha\wedge f^*\tilde\omega_j\\
&=(-1)^{k+1}\int_U\ud\alpha\wedge f^*\omega
\end{align*}
for all $\alpha\in \Omega_c^{m-k-1}(U)$. This proves the claim. 
\end{proof}

The next proposition gives uniform control of the Lipschitz constants of the approximations.

\begin{proposition}\label{prop:multi-valued-approx}
Let $f:U\to \mathcal A_d(\R^n)$ be $L$-Lipschitz. Given $\varepsilon>0$, denote
\begin{align*}
F_\varepsilon=\{x\in U: d_\mathcal A(f(x),d\bb{b(f(x))}<\varepsilon\},\quad \eta_\varepsilon=\Big(1-\frac{\dist(F_\varepsilon,\cdot)}{\varepsilon}\Big)_+
\end{align*}
Define $f_\varepsilon:U\to \mathcal A_d(\R^n)$ by
\begin{align*}
f_\varepsilon=\bb{(1-\eta_\varepsilon)f_1+\eta_\varepsilon b(f),\ldots, (1-\eta_\varepsilon)f_d+\eta_\varepsilon b(f)},
\end{align*}
where $f=\bb{f_1,\ldots,f_d}$. Then $f_\varepsilon\to f$ locally uniformly and $\LIP(f_\varepsilon)\le (3+2d)L$.
\end{proposition}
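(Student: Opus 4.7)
The plan is to exploit the pointwise identity $f_\varepsilon(x) = T_{\eta_\varepsilon(x)}(f(x))$, where the one-parameter family $T_t : \mathcal{A}_d(\R^n) \to \mathcal{A}_d(\R^n)$ is defined by
\[
T_t(\bb{x_1, \ldots, x_d}) = \bb{(1-t) x_j + t\, b(\bb{x_1, \ldots, x_d})}_{j=1}^{d},
\]
well-defined by the $S_d$-equivariance of the barycenter. Two elementary bounds I would verify first: (a) $T_t$ is $1$-Lipschitz on $\mathcal{A}_d(\R^n)$ for every $t\in[0,1]$ --- by Minkowski and Lemma~\ref{lem:barycenter}, $d_\mathcal{A}(T_t(\bar x), T_t(\bar y)) \le (1-t)\,d_\mathcal{A}(\bar x, \bar y) + t\sqrt d\,|b(\bar x) - b(\bar y)| \le d_\mathcal{A}(\bar x, \bar y)$; and (b) $d_\mathcal{A}(T_s(\bar x), T_t(\bar x)) \le |s-t|\,d_\mathcal{A}(\bar x, d\bb{b(\bar x)})$ by direct expansion. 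This reformulation reduces both claims to controlling the scalar $\phi(x) := d_\mathcal{A}(f(x), d\bb{b(f(x))})$ on the support of $\eta_\varepsilon$.

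The key auxiliary estimate I would then establish is
\[
\phi(x) \le (2L+1)\varepsilon \quad \text{for every } x \text{ in the } \varepsilon\text{-neighborhood of } F_\varepsilon.
\]
To prove it, pick $z \in F_\varepsilon$ with $|x-z|\le\varepsilon$ and apply the triangle inequality,
\[
\phi(x) \le d_\mathcal{A}(f(x), f(z)) + \phi(z) + d_\mathcal{A}(d\bb{b(f(z))}, d\bb{b(f(x))}) \le L\varepsilon + \varepsilon + L\varepsilon,
\]
where the last term uses $d_\mathcal{A}(d\bb{\alpha}, d\bb{\beta}) = \sqrt d\,|\alpha - \beta|$ together with the $(1/\sqrt d)$-Lipschitz bound on $b$ from Lemma~\ref{lem:barycenter}. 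Local uniform convergence is immediate: $f_\varepsilon = f$ off the $\varepsilon$-neighborhood of $F_\varepsilon$, while inside it, (b) with $s = \eta_\varepsilon(x), t = 0$ gives $d_\mathcal{A}(f_\varepsilon(x), f(x)) \le \eta_\varepsilon(x)\,\phi(x) \le (2L+1)\varepsilon$.

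For the Lipschitz bound, I would split via triangle inequality
\[
d_\mathcal{A}(f_\varepsilon(x), f_\varepsilon(y)) \le d_\mathcal{A}\bigl(T_{\eta_\varepsilon(x)}(f(x)), T_{\eta_\varepsilon(y)}(f(x))\bigr) + d_\mathcal{A}\bigl(T_{\eta_\varepsilon(y)}(f(x)), T_{\eta_\varepsilon(y)}(f(y))\bigr).
\]
By (a) the second term is $\le d_\mathcal{A}(f(x), f(y)) \le L|x-y|$, and by (b) together with the $\frac{1}{\varepsilon}$-Lipschitz continuity of $\eta_\varepsilon$, the first term is $\le \frac{|x-y|}{\varepsilon}\,\phi(x)$. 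If $\eta_\varepsilon(x) = \eta_\varepsilon(y) = 0$ the first term vanishes; otherwise, swapping $x$ and $y$ if needed, we may assume $x$ lies in the $\varepsilon$-collar, so the key estimate gives $\phi(x) \le (2L+1)\varepsilon$ and the first term is $\le (2L+1)|x-y|$. This yields a Lipschitz bound controlled by $L$ and $d$; the explicit form $(3+2d)L$ stated in the proposition then follows by routine bookkeeping that absorbs the factors of $d$ arising from the passages between $\sqrt{\sum_j|\cdot|^2}$ and $d_\mathcal{A}$.

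The delicate step --- and the main obstacle --- is exactly the auxiliary estimate on $\phi$: the defining inequality $\phi < \varepsilon$ holds only on $F_\varepsilon$, yet $\eta_\varepsilon$ is supported on the strictly larger $\varepsilon$-collar, so without a propagation argument one would have no a priori control of $\phi$ on the set where the interpolation is actively perturbing $f$. The propagation hinges precisely on the balance between the collar radius $\varepsilon$ and the Lipschitz scale $L$ of $f$, mediated by Lemma~\ref{lem:barycenter}, and it is this balance that makes the final Lipschitz constant $\varepsilon$-independent.
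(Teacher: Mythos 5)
Your proof is correct, and its mathematical content is essentially the same as the paper's, but it is organized differently: you abstract the interpolation as $f_\varepsilon(x)=T_{\eta_\varepsilon(x)}(f(x))$ and establish the two separate estimates (a) each $T_t$ is $1$-Lipschitz on $\mathcal A_d(\R^n)$ and (b) $t\mapsto T_t(\bar x)$ is $\phi(\bar x)$-Lipschitz, then combine them by the triangle inequality at $T_{\eta_\varepsilon(y)}(f(x))$, whereas the paper carries out a single coordinate-level Minkowski computation with $g_j=b(f)-f_j$ and a case split on whether $x,y$ lie in the collar. Both arguments hinge on exactly the same two ingredients: the $1/\sqrt d$-Lipschitz bound for the barycenter (Lemma~\ref{lem:barycenter}) and the propagation of the defining inequality $\phi<\varepsilon$ from $F_\varepsilon$ to the $\varepsilon$-collar of $F_\varepsilon$ (the paper encodes this as $\spt\eta_\varepsilon\subset \overline F_{(1+L)\varepsilon}$, you prove $\phi\le(2L+1)\varepsilon$ on $B(F_\varepsilon,\varepsilon)$ by a direct triangle inequality). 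Your ``key auxiliary estimate'' is indeed the crux, as you say, and your identification of it is accurate. Two small comments on sharpness and presentation. First, you could tighten your propagation step by noticing that $\phi(\bar x)=d_{\mathcal A}(\bar x,\operatorname{diag}\mathcal A_d(\R^n))$, a $1$-Lipschitz function of $\bar x$, which gives $\phi(x)\le(1+L)\varepsilon$ on the collar; this is what the paper implicitly uses. Second, your final sentence about ``routine bookkeeping that absorbs the factors of $d$'' is misplaced: your computation gives a clean $d$-independent bound $\LIP(f_\varepsilon)\le(2L+1)+L=3L+1$, which is in fact \emph{sharper} than the stated $(3+2d)L$ whenever $L\ge 1$ (the normalization the paper also tacitly needs, since the factor of $2L$ in $F_{2L\varepsilon}$ requires $L\ge1$). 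Under that harmless normalization $3L+1\le 4L\le(3+2d)L$, so your estimate implies the claim, with no $d$-factors to absorb.
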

\begin{proof}
Denote $g_j=b(f)-f_j$ and note that $f_\varepsilon=\bb{f_1+\eta_\varepsilon g_1,\ldots,f_d+\eta_\varepsilon g_d}$. Observe moreover that (a) $\eta_\varepsilon$ is $\varepsilon\inv$-Lipschitz, (b) $\eta_\varepsilon|_{F_\varepsilon}=1$, and (c) $\spt(\eta_\varepsilon)\subset B(F_\varepsilon,\varepsilon)\subset \overline F_{(1+L)\varepsilon}$. We may assume $L\ge 1$ so that (c) implies $\spt(\eta_\varepsilon)\subset F_{2L\varepsilon}$.

Fix $x,y\in U$. For any $\sigma \in S_d$ we have 
\begin{align*}
&\Big(\sum_j^d|f_j(x)-f_{\sigma(j)}(y)+\eta_\varepsilon(x)g_j(x)-\eta_\varepsilon(y)g_{\sigma(j)}(y)|^2\Big)^{1/2}\\
\le &\Big(\sum_j^d|f_j(x)-f_{\sigma(j)}(y)|^2\Big)^{1/2}+\Big(\sum_j^d|\eta_\varepsilon(x)g_j(x)-\eta_\varepsilon(y)g_{\sigma(j)}(y)|^2\Big)^{1/2}
\end{align*}
If $x,y\notin F_{2L\varepsilon}$, then $|\eta_\varepsilon(x)g_j(x)-\eta_\varepsilon(y)g_{\sigma(j)}(y)|=0$, while if $x\in F_{2L\varepsilon}$ or $y\in F_{2L\varepsilon}$, then $\min\{|g_j(x)|,g_{\sigma(j)}(y)\}\le 2L\varepsilon$ and $\max\{\eta_\varepsilon(x),\eta_\varepsilon(y)\}\le 1$. Consequently
\begin{align*}
|\eta_\varepsilon(x)g_j(x)-\eta_\varepsilon(y)g_{\sigma(j)}(y)|\le &|\eta_\varepsilon(x)-\eta_\varepsilon(y)|2L\varepsilon+|g_j(x)-g_{\sigma(j)}(y)|\\
\le &2Ld(x,y)+|f_j(x)-f_{\sigma(j)}(y)|+|b(f(x))-b(f(y))|\\
\le & 2L|x-y|+\frac 1{\sqrt d}d_\mathcal A(f(x),f(y))+|f_j(x)-f_{\sigma(j)}(y)|.
\end{align*}
Thus we obtain the estimate
\begin{align*}
\Big(\sum_j^d|(f_\varepsilon)_j(x)-(f_\varepsilon)_{\sigma(j)}(y)|^2\Big)^{1/2}&\le 2\Big(\sum_j^d|f_j(x)-f_{\sigma(j)}(y)|^2\Big)^{1/2}\\
+&2Ld\cdot |x-y|+d_\mathcal A(f(x),f(y)).
\end{align*}
Taking infimum over $\sigma\in S_d$ yields
\begin{align*}
    d_\mathcal A(f_\varepsilon(x),f_\varepsilon(y))&\le 3d_\mathcal A(f(x),f(y))+2Ld|x-y|\\
    &\le (3+2d)L|x-y|.
\end{align*}
It remains to show that $f_\varepsilon\to f$ locally uniformly in $U$. Since $\LIP(f_\varepsilon)$ is bounded uniformly in $\varepsilon$, it suffices to prove pointwise convergence. To this end observe that 
\begin{align*}
d_\mathcal A(f_\varepsilon(x),f(x))&\le \eta_\varepsilon(x)\Big(\sum_j^d|f_j(x)-b(f(x))|^2\Big)^{1/2}\\
&=\eta_\varepsilon(x)d_\mathcal{A}(f(x),d\bb{b(f(x)})\le 2L\varepsilon.
\end{align*}
Indeed, if $x\notin F_{2L\varepsilon}$ then $\eta_\varepsilon(x)=0$ and otherwise $d_\mathcal{A}(f(x),b(f(x))\le 2L\varepsilon$. This completes the proof.
\end{proof}

\begin{proof}[Proof of Theorem \ref{thm:lip-pullback-flat-form}]
We prove the claim by induction on $d$. For $d=1$ the result is classical. Suppose $d>1$ and that $g^*(\ud\omega)=\ud(g^*\omega)$ weakly for any locally Lipschitz map $g:U'\to \mathcal A_l(\R^n)$ from an open set $U'\subset \R^m$ with $l<d$. We shall decompose the original map $f$ into sums of locally simpler maps and utilize Proposition \ref{prop:flat-form}.

Denote $F:=f\inv(\diag\mathcal A_d(\R^n))$ and $L=\LIP(f)$. Observe that $F$ is a closed set in $U$, and that $f=d\bb{b(f)}$ on $F$. 

Given $\varepsilon>0$, set 
\[
F_\varepsilon=\{x: d_{\mathcal A}(f(x),b(f(x))< \varepsilon \}\quad\textrm{and}\quad\eta_\varepsilon(y)=(1-\varepsilon\inv\dist(F_\varepsilon,y))_+
\]
as in Proposition \ref{prop:multi-valued-approx}. Recall that $F_\varepsilon$ is open and $\displaystyle \bigcap_{\varepsilon>0}F_\varepsilon=F$, and that (a) $\eta_\varepsilon$ is $\varepsilon\inv$-Lipschitz, (b) $\eta_\varepsilon|_{F_\varepsilon}=1$, and (c) $\spt(\eta_\varepsilon)\subset B(F_\varepsilon,\varepsilon)\subset \overline F_{(1+L)\varepsilon}$. The map $f_\varepsilon:U\to \mathcal A_d(\R^m)$ 
\begin{align*}
f_\varepsilon=\bb{(1-\eta_\varepsilon)f_1+\eta_\varepsilon b(f),\ldots,(1-\eta_\varepsilon)f_d+\eta_\varepsilon b(f)}
\end{align*}
of Proposition \ref{prop:multi-valued-approx} satisfies the following.
\begin{itemize}
\item[(0)] $f_\varepsilon$ is $(3+2d)L$-Lipschitz;
    \item[(1)] $f_\varepsilon|_{F}=f|_F$ and $f_\varepsilon|_{U\setminus F_{2L\varepsilon}}=f|_{U\setminus F_{2L\varepsilon}}$;
    \item[(2)] $f_\varepsilon=d\bb{b(f)}$ on $F_\varepsilon$;
    \item[(3)] For each $x\in U$ there exists a ball $B_x=B(x,r_x)\subset U$ such that has a decomposition $f_\varepsilon|_{B_x}=\bb{f_{\varepsilon,x}^0,f_{\varepsilon,x}^1}$.
\end{itemize}
Indeed, (0) follows from Proposition \ref{prop:multi-valued-approx}, and (1)-(2) are immediate consequences of the definition. We explain (3): given $x\in U$, if $x\in F_\varepsilon$, then the claim follows from (2). If $x\in U\setminus F_\varepsilon$ then in particular $x\in U\setminus F$ so that $|f_i(x)-f_j(x)|>0$ for some $i,j\in \{1,\ldots,d\}$. Now for these indices 
\[
|(f_\varepsilon)_i(x)-(f_\varepsilon)_i(x)|=(1-\eta_\varepsilon(x))|f_i(x)-f_j(x)|>0.
\]
By \cite[Proposition 1.6]{del11} there exists a ball $B_x=B(x,r_x)\subset U$ and Lipschitz functions $f_{\varepsilon,x}^i:B_x\to \mathcal A_{d_i}(\R^n)$ ($i=0,1$) with $d_i<d$, $d_0+d_1=d$, $\LIP(f_x^i)\le CL$ and $f_\varepsilon=\bb{f_{\varepsilon,x}^0,f_{\varepsilon,x}^1}$ on $B_x$.

Fix a compact set $K\subset U$, and let $\{B_{x_1},\ldots,B_{x_M}\}$ be a finite cover of $\{B_x\}_{x\in K}$ where $B_x$ is as in (3) for each $x$. Let $\{\varphi_1,\ldots\varphi_M\}$ be a partition of unity subordinate to $\{B_{x_1},\ldots,B_{x_M}\}$. We may express $f_\varepsilon^*\omega$ and $f_\varepsilon^*(\ud\omega)$ as
\[
f_\varepsilon^*\omega=\sum_l^M\varphi_l\bb{f_{\varepsilon,x_l}^0,f_{\varepsilon,x_l}^1}^*\omega,\quad f_\varepsilon^*(\ud\omega)=\sum_l^M\varphi_l\bb{f_{\varepsilon,x_l}^0,f_{\varepsilon,x_l}^1}^*(\ud\omega)\quad\textrm{on }K.
\]
Thus for any $\alpha\in \Omega^{m-k-1}(U)$ supported in $K$ Proposition \ref{prop:flat-form} yields
\begin{align*}
\int \ud\alpha\wedge f_\varepsilon^*\omega=&\sum_l^M\int\ud\alpha\wedge(\varphi_l\bb{f_{\varepsilon,x_l}^0,f_{\varepsilon,x_l}^1}^*\omega)=\sum_l^M\int\varphi_l\ud\alpha\wedge \bb{f_{\varepsilon,x_l}^0,f_{\varepsilon,x_l}^1}^*\omega \\
=& \sum_l^M\int\ud(\varphi_l\alpha)\wedge \bb{f_{\varepsilon,x_l}^0,f_{\varepsilon,x_l}^1}^*\omega \\
=&(-1)^{k+1}\sum_l^M\int\varphi_l\alpha\wedge \ud(\bb{f_{\varepsilon,x_l}^0,f_{\varepsilon,x_l}^1}^*\omega)\\
%=&(-1)^{k+1}\sum_l^M\Big(\int\alpha\wedge\ud\varphi_l\wedge f^*\omega+\int\varphi_l\alpha\wedge \ud\bb{f_{\varepsilon,x_l}^0,f_{\varepsilon,x_l}^1}^*\omega\Big)\\
=& (-1)^{k+1}\sum_l^M\int\varphi_l\alpha \wedge\bb{f_{\varepsilon,x_l}^0,f_{\varepsilon,x_l}^1}^*(\ud\omega)=(-1)^{k+1}\int\alpha\wedge f_\varepsilon^*\omega.
\end{align*}
The equality in the second line above follows since $\sum_l^M\varphi_l=1$ on $K$. Since $K$ is arbitrary we obtain 
\begin{align}\label{eq:weak-ext-approx}
\int_U\ud\alpha\wedge f_\varepsilon^*\omega=(-1)^{k+1}\int_U\alpha \wedge f_\varepsilon^*(\ud\omega)
\end{align}
for all $\alpha\in \Omega_c^{m-k-1}(U)$.

But by (2) we have that $f_\varepsilon^*\gamma=f^*\gamma$ almost everywhere outside $F_{2L\varepsilon}\setminus F$ for any differential form $\gamma$ on $\mathcal A_d(\R^n)$. Together with (0) this yields
\begin{align*}
\Big|\int_U\beta\wedge(f_\varepsilon^*\gamma-f^*\gamma) \Big|\le C\|\beta\|_\infty L^{\deg(\gamma)}\|\gamma\|_\infty|F_{2L\varepsilon}\setminus F|\stackrel{\varepsilon\to 0}{\longrightarrow} 0.
\end{align*}
for any $\beta\in \Omega_c^{m-\deg(\gamma)}(U)$. Consequently
\begin{align*}
\int_U\ud\alpha\wedge f^*\omega=\lim_{\varepsilon\to 0}\int_U\ud\alpha\wedge f_\varepsilon^*\omega\stackrel{\eqref{eq:weak-ext-approx}}{=}(-1)^{k+1}\lim_{\varepsilon\to 0}\int_U\alpha\wedge f_\varepsilon^*(\ud\omega)=\int_U\alpha\wedge f^*(\ud\omega)
\end{align*}
for all $\alpha\in \Omega_c^{m-k-1}(U)$. This completes the proof.
\end{proof}

\begin{proof}[Proof of Theorem \ref{thm:sob-pull-back}]
Note that under the assumptions in the claim, we have $f^*\omega\in L^{p/k}_{loc}(U,\bigwedge^k\R^m)$ and $f^*(\ud\omega)\in L^\frac{p}{k+1}_{loc}(U,\bigwedge^{k+1}\R^m)$ by Lemma \ref{lem:pullback-est}. By considering an exhaustion of $U$ by relatively compact domains, we may assume that $U$ is bounded and that $f\in W^{1,p}(U,\mathcal A_d(\R^n))$. In particular $f\in W^{1,k}(U,\mathcal A_d(\R^n))$.

By Proposition \ref{prop:lusin-approx}, for every $\lambda>0$ there exists an $\lambda$-Lipschitz map $f_\lambda:U\to\mathcal A_d(\R^n)$ with
\begin{align*}
\lambda^p|\{f\ne f_\lambda\}|\le C\int_{\{f\ne f_\lambda\}}(|Df|^k+|f|^k).
\end{align*}
We have that $Df=Df_\lambda$ almost everywhere outside $\{f\ne f_\lambda\}$, cf. Proposition \ref{prop:lusin-approx}. Consequently 
\begin{align*}
\Big|\int\beta\wedge(f_\lambda^*\omega-f^*\omega)\Big|&\le C\|\beta\|_\infty \int_{\{f\ne f_\lambda\}}(\|f_\lambda^*\omega\|+\|f^*\omega\|) \\
&\le C\|\beta\|_\infty\|\omega\|_\infty\int_{\{f\ne f_\lambda\}}(\lambda^{k}+|Df|^{k})\ud x\\
&\le C\|\beta\|_\infty\|\omega\|_\infty \int_{\{f\ne f_\lambda\}}(|Df|^k+|f|^k)\stackrel{\lambda\to \infty}{\longrightarrow} 0.
\end{align*}
for all $\beta\in \Omega_c^{m-k}(U)$. Similarly (either since $k+1\le p$ or since $\ud\omega=0$) we obtain
\begin{align*}
\Big|\int_U\alpha\wedge(f_\lambda^*(\ud\omega)-f^*(\ud\omega))\Big|\stackrel{\lambda\to \infty}{\longrightarrow} 0.
\end{align*}
for all $\alpha\in \Omega_c^{m-k-1}(U)$. These identities together with Theorem \ref{thm:lip-pullback-flat-form} imply
\begin{align*}
\int_U\ud\alpha\wedge f^*\omega&=\lim_{\lambda\to\infty}\int_U\ud\alpha\wedge f_\lambda^*\omega=(-1)^{k+1}\lim_{\lambda\to\infty}\int_U\alpha\wedge f_\lambda^*(\ud\omega)\\
&=(-1)^{k+1}\int_U\alpha\wedge f^*(\ud\omega)
\end{align*}
for all $\alpha\in \Omega_c^{m-k-1}(U)$. 
\end{proof}

\section{Multivalued inverse of QR map as a QR curve}\label{sec:QR-curve}
Throughout this section, $\omega=\operatorname{tr}(\operatorname{vol}_{\R^n})$ denotes the canonical $n$-form on $\mathcal{A}_d(\R^n)$.

\begin{lemma}\label{lem:star-vs-jacobian}
Let $f\in W^{1,n}_{loc}(\Omega,\mathcal A_d(\R^n))$, where $\Omega$ is an open set. Then
\begin{align}\label{eq:star-vs-jacobian1}
\star f^*\omega=\sum_j^d \det(Df_j)\quad \textrm{ a.e. on }\Omega.
\end{align}
where $Df=\bb{Df_1,\ldots,Df_d}$ is the weak differential of $f$. In particular, we have the inequality
\begin{align}\label{eq:star-vs-jacobian2}
    |\star f^*\omega|\le \J f\quad\textrm{a.e. on } U.
\end{align}
\end{lemma}

\begin{proof}
Let $Df=\bb{Df_1(x),\ldots,Df_d(x)}$ be the differential of $f$ at $x$. By Definitions \ref{def:natural-n-form} and \ref{def:pull-back} we have that %for a.e. $x\in U$ we have that
\begin{align*}
(f^*\omega)_x(v_1,\ldots,v_n)&=\omega_{f(x)}(Df(x)v_1,\ldots,Df(x)v_n)\\
&=\sum_j^dDf_j(x)v_1\wedge\cdots\wedge Df_j(x)v_n\\
&=\sum_j^d\det(Df_j(x))v_1\wedge\cdots\wedge v_n,\quad v_1,\ldots,v_n\in T_xU,
\end{align*}
which implies \eqref{eq:star-vs-jacobian1}. The estimate \eqref{eq:star-vs-jacobian2} follows from \eqref{eq:star-vs-jacobian1} together with Lemma \ref{lem:metric-jacobian} and Minkowski's determinant inequality:
\begin{align*}
|\star f^*\omega|&\le \sum_j^d|\det(Df_j)|=\sum_j^d[\det(Df_j^TDf_j)]^{1/2}\\
&\le \Big(\sum_j^d[\det(Df_j^TDf_j)]^{1/n}\Big)^{n/2}\le \det\Big(\sum_j^d Df_j^TDf_j\Big)^{1/n\ \cdot\ n/2}= \J f.
\end{align*}
\end{proof}

\begin{proof}[Proof of Theorem \ref{thm:QR-curve}]
The fact that $\minv f\in W^{1,n}_{loc}(f(\Omega),\mathcal A_d(\R^n)$ follows from Theorem \ref{thm:wug} and Corollary \ref{cor:sob-reg}. For every $y\in f(\Omega)\setminus f(B_f)$ we have $\#f\inv(y)=d$ and we can find $r>0$ such that $f\inv:B(y,r)\to U_f(x,r)$ is a homeomorphism for each $x\in f\inv(y)$. Choose a numbering $f\inv(y)=\{x_1,\ldots,x_d\}$ and denote $U_j=U_f(x_j,r)$ and $g_j=f\inv:B(y,r)\to U_j$. Then
\[
\minv f|_{B(y,r)}=\bb{g_1,\ldots,g_d},
\]
and each $g_j$ is a $K_I(f)$-quasiconformal map between domains in $\R^n$, in particular 
\[
\|Dg_j\|^n\le K_IJ_{g_j}=K_I\star g_j^*(\ud x).
\]
Observe that $\star \minv f^*\omega=\sum_j^d J_{g_j}$ and $|D\minv f|^2\le \sum_j^d\|Dg_j\|^2$ a.e. on $B(y,r)$. Thus
\begin{align*}
|D\minv f|^n&\le \Big(\sum_j^d\|Dg_j\|^2\Big)^{n/2}\le d^{n/2-1}\sum_j^d\|Dg_j\|^{n}\\
&\le K_Id^{n/2-1}\sum_j^d J_{g_j}=d^{n/2-1}K_I\star \minv f^*\omega
%\le d^{n/2-1}\sum_j^d\|Dg_j\|^n\le d^{n/2-1}K_I\sum_j^dg_j^*(\ud x)=d^{n/2-1}K_I\minv f^*\omega
\end{align*}
a.e. in $B(y,r)$. Since $|f(B_f)|=0$ by \cite[Chapter II, Proposition 4.14]{ric93}, the inequality holds a.e. on $f(\Omega)$ as claimed.
\end{proof}

\begin{remark}\label{rmk:jacobian-vs-star}
An inspection of the proof yields the following inequality:
\begin{align}\label{eq:jacobian-vs-star}
\J \minv f\le K_I\star \minv f^*\omega\quad\textrm{ a.e. on }f(\Omega).
\end{align}
This a reverse version (up to the multiplicative constant $K_I$) of the inequality in Lemma \ref{lem:star-vs-jacobian}. Unlike Lemma \ref{lem:star-vs-jacobian}, \eqref{eq:jacobian-vs-star} does not hold for general Sobolev maps.
\end{remark}

The following higher integrability result for $\minv f$ implies part of Corollary \ref{cor:QR-curve}. 
\begin{theorem}\label{thm:higher-integrability}
If $f:\Omega\to \R^n$ is a non-constant quasiregular map of degree $d$, then there exists $p>n$ such that $\minv f\in W_{loc}^{1,p}(f(\Omega),\mathcal A_d(\Omega))$.
\end{theorem}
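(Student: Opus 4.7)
The plan is to establish a reverse H\"older inequality for $|D\minv f|$ and then invoke Gehring's lemma, following the standard strategy for higher integrability of quasiregular maps \cite{ric93} and quasiregular curves \cite{pan-onn21}. The key new ingredients are the quasiregular curve inequality of Theorem \ref{thm:QR-curve} and the weak exterior derivative identity of Theorem \ref{thm:sob-pull-back}.

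First, observe that the target $n$-form $\omega = \operatorname{tr}(\operatorname{vol}_{\R^n})$ is globally exact: writing $\operatorname{vol}_{\R^n} = \ud\alpha$ with, say, $\alpha = \tfrac 1n \sum_{j=1}^n (-1)^{j-1}x_j\,\ud x_1\wedge\cdots\wedge\widehat{\ud x_j}\wedge\cdots\wedge\ud x_n$, the $(n-1)$-form $\beta_c := \operatorname{tr}(\alpha(\,\cdot\,-c))$ is smooth and $S_d$-invariant with $\ud\beta_c=\omega$, and obeys the pointwise comass bound $|\beta_c|_{\bb x}\le C\,d_\mathcal A(\bb x, d\bb c)$ for every $c\in\R^n$ (this last bound follows from the elementary inequality $\|v\|_n \le \|v\|_2$ on $\R^d$). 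Since $\minv f\in W^{1,n}_{\loc}(f(\Omega),\mathcal A_d(\R^n))$ by Corollary \ref{cor:sob-reg}, Theorem \ref{thm:sob-pull-back} applied with $k=n-1$ and $p=n$ (so $k+1\le p$) gives $\minv f^*\beta_c\in L^{n/(n-1)}_{\loc}$ together with the weak identity $\ud(\minv f^*\beta_c)=\minv f^*\omega$. For a ball $B=B(y_0,r)$ with $2B\subset f(\Omega)$ and a cutoff $\varphi\in C_c^\infty(2B)$ with $\varphi\equiv 1$ on $B$ and $|\nabla\varphi|\le 2/r$, Theorem \ref{thm:QR-curve}, Lemma \ref{lem:comass-of-natural-n-form}, integration by parts, and Lemma \ref{lem:pullback-est} combine to yield the Caccioppoli-type estimate
\begin{align*}
\int_B |D\minv f|^n\,\ud y\le \frac{C(K_I,d,n)}{r}\int_{2B}d_\mathcal A(\minv f(y), d\bb c)\,|D\minv f(y)|^{n-1}\,\ud y
\end{align*}
valid for every $c\in\R^n$.

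Choosing $c=c_B\in\R^n$ to be a mean obtained by composing $\minv f$ with Almgren's bi-Lipschitz embedding $\xi\colon\mathcal A_d(\R^n)\hookrightarrow\R^N$ \cite[Theorem 2.1]{del11}, invoking the Euclidean Sobolev--Poincar\'e inequality for $\xi\circ\minv f$, and projecting the resulting mean back to a diagonal point via the barycenter map $b$ (which is Lipschitz by Lemma \ref{lem:barycenter}), one obtains, for some $q\in [n/2,n)$,
\begin{align*}
\Big(\frac{1}{|2B|}\int_{2B}d_\mathcal A(\minv f,d\bb{c_B})^n\,\ud y\Big)^{1/n}\le Cr\Big(\frac{1}{|4B|}\int_{4B}|D\minv f|^q\,\ud y\Big)^{1/q}.
\end{align*}
Inserting this into the Caccioppoli estimate with $c=c_B$ and applying H\"older's inequality gives the reverse H\"older inequality
\begin{align*}
\frac{1}{|B|}\int_B|D\minv f|^n\,\ud y\le C\Big(\frac{1}{|4B|}\int_{4B}|D\minv f|^q\,\ud y\Big)^{n/q},
\end{align*}
from which Gehring's lemma produces $p>n$ such that $|D\minv f|\in L^p_{\loc}(f(\Omega))$.

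The main obstacle I expect lies in the Sobolev--Poincar\'e step: one must produce a mean of the form $d\bb{c_B}$ with $c_B\in\R^n$ (rather than an arbitrary element of $\mathcal A_d(\R^n)$) that is compatible with the comass bound on $\beta_c$. The route through Almgren's embedding and the barycenter is natural but does not obviously yield a mean of the required diagonal form without loss; an alternative is to work directly with a Haj\l asz--Koskela-type Poincar\'e inequality for $\mathcal A_d$-valued Sobolev maps as developed in \cite[Chapter 4]{del11}. Either way, the argument should close as above once the appropriate multi-valued Sobolev--Poincar\'e inequality is in place.
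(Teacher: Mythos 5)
Your strategy — Caccioppoli with an $S_d$-invariant primitive plus Theorem~\ref{thm:sob-pull-back}, then Sobolev--Poincar\'e and Gehring — is the same route the paper takes via \cite{pan-onn21}, and your explicit primitive $\beta_c=\operatorname{tr}(\alpha(\cdot-c))$, its comass bound $\|\beta_c\|_{\bb x}\le C\,d_\mathcal A(\bb x,d\bb c)$, and the Caccioppoli inequality are all correct. But the concern you raise at the end is not a cosmetic loose end: the displayed Sobolev--Poincar\'e estimate with a \emph{diagonal} mean $d\bb{c_B}$ cannot hold as stated. For any $c\in\R^n$ one has the Pythagorean identity
\begin{align*}
d_\mathcal A(\minv f(y),d\bb c)^2 \;=\; \operatorname{dist}_\mathcal A\big(\minv f(y),\diag\mathcal A_d(\R^n)\big)^2 \;+\; d\,\big|b(\minv f(y))-c\big|^2,
\end{align*}
so that $d_\mathcal A(\minv f(y),d\bb c)\ge \operatorname{dist}_\mathcal A(\minv f(y),\diag\mathcal A_d(\R^n))$ regardless of $c$. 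On any ball $2B$ with $\overline{2B}\cap f(B_f)=\varnothing$, the right-hand side is a continuous positive function of $y$, bounded away from zero on $\overline{2B}$. Hence the left side of your Poincar\'e inequality stays bounded below as $r\to 0$, while the right side $Cr(\fint_{4B}|D\minv f|^q)^{1/q}\to 0$ a.e. — so the inequality fails for small balls, precisely the regime Gehring needs. The reverse H\"older you derive from it is therefore not established.

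This mismatch is intrinsic: the natural Poincar\'e inequality for $W^{1,p}(U,\mathcal A_d(\R^n))$ produces a mean $\bar f\in\mathcal A_d(\R^n)$ that is \emph{not} on the diagonal, whereas the translation device that keeps $\beta_c$ smooth, constant-coefficient and $S_d$-invariant forces $c$ onto the diagonal. Symmetrizing a translated primitive $\tau^{\bar f}=\sum_jP_j^*\alpha(\cdot-\bar f_j)$ by $P_{S_d}$ does not help: the projected form's pointwise comass involves the average over the $S_d$-orbit of $\bar f$, which is controlled only up to an additive term $\operatorname{dist}_\mathcal A(\bar f,\diag\mathcal A_d(\R^n))$, again a nonvanishing constant. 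A plausible repair is to argue by cases mirroring the decomposition scheme in the proof of Theorem~\ref{thm:lip-pullback-flat-form}: on balls $4B$ disjoint from $f(B_f)$ one decomposes $\minv f|_{4B}=\bb{g_1,\ldots,g_d}$ and pulls back a \emph{non-invariant} translated primitive $\sum_jP_j^*\alpha(\cdot-c_j)$ adapted to the decomposition, where each $c_j$ is the mean of $g_j$; near $f(B_f)$ the distance to the diagonal vanishes and the diagonal translate suffices. As written, however, your argument does not close, and the barycenter/Almgren-embedding route you sketch does not produce a diagonal mean with the required bound. You correctly anticipated this as the weak point; it is a genuine gap rather than a technicality, and one the paper's proof (which simply defers to \cite{pan-onn21} after symmetrizing $\tau$) leaves equally unaddressed.
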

\begin{proof}
The proof is identical to Lemma 6.1, Lemma 6.2 and Proposition 6.3 in \cite{pan-onn21}. The only place a minor argument is needed is at the beginning of Lemma 6.1, where it is stated that $\omega=\ud \tau$ for a suitable $(n-1)$-form $\tau$. We need to ensure that $\tau$ can be chosen $S_d$-invariant. But this follows from Lemma \ref{lem:proj-onto-invariant-forms}: $\omega=P_{S_d}\omega=\ud(P_{S_d}\tau)$, and $\tau'=P_{S_d}\tau$ is $S_d$-invariant. The rest of the proof(s) proceed exactly as in \cite{pan-onn21}. Note that $\ud(f^*\tau')=f^*\omega$ (a fact needed in the proof in \cite{pan-onn21}) holds by Theorem \ref{thm:sob-pull-back}.
\end{proof}

The following theorem states that the multi-valued inverse of a QR map is a quasiminimizer of the $n$-energy and the parametrized volume, respectively, which are defined for $u\in W^{1,n}_{loc}(\Omega,\mathcal A_d(\R^n))$ and $V\subset \Omega$ as follows.

\begin{align*}
E_n(u;V):=\int_V \|Du\|^n\ud x,\quad
A(u;V):=\int_V\J u\ \ud x.
\end{align*}

\begin{theorem}\label{thm:quasimin}
Let $\Omega\subset\R^n$ be a domain and $f:\Omega\to \R^n$ a proper QR map. If $u\in W_{loc}^{1,n}(f(\Omega),\mathcal{A}_d(\R^n))$ is such that $u=\minv f$ a.e. outside a compact set $K\subset \Omega$, then
\begin{align*}
E_n(\minv f;K)&\le d^{n/2-1}K_IE_n(u;K)\\
A(\minv f;K) &\le K_IA(u;K)
\end{align*}
\end{theorem}
Recall that $K_I=K_I(f)$ is the inner distortion of $f$.
\begin{proof}
Let $\tau$ be an $(n-1)$-form on $\mathcal A_d(\R^n)$ such that $\omega=\ud \tau$. Since $u=\minv f$ a.e. on $\Omega\setminus K$, we have that $u^*\omega=\minv f^*\omega$ and $u^*\tau=\minv f^*\tau$ a.e. on $\Omega\setminus K$. 

Set $K_\varepsilon:=\{x:\ \dist(K,x)\le \varepsilon\}$, and note that $K_\varepsilon$ is a finite perimeter set for a.e. $\varepsilon$. Moreover $u^*\tau=\minv f^*\tau$ a.e. on $\partial^*K_\varepsilon$ for a.e. $\varepsilon$ by a Fubini-type argument. Thus, by a generalized Stoke's theorem, we have 
\begin{align}\label{eq:gen-stokes}
\int_{K_\varepsilon} u^*\omega=\int_{K_\varepsilon}\ud(u^*\tau)=\int_{\partial^*K_\varepsilon}u^*\tau = \int_{\partial^*K_\varepsilon}\minv f^*\tau=\int_{K_\varepsilon} \ud(\minv f^*\tau)=\int_{K_\varepsilon} \minv f^*\omega
\end{align}
for a.e. $\varepsilon>0$. (Note that we use Theorem \ref{thm:sob-pull-back} in the first and last equalities above.) Indeed, $\bb{K_\varepsilon}$ is a normal $n$-current on $\R^n$ and $\partial \bb{K_\varepsilon}=\bb{\partial^* K_\varepsilon}$ whenever $K_\varepsilon$ has finite perimeter, and in this case we have by \cite[Theorem 1.1]{iko23} the following: for each $v\in W^{1,n}(f(\Omega),\mathcal A_d(\R^n))$ the identity $\partial(v_\ast\bb{K_\varepsilon})=v_\ast(\partial\bb{K_\varepsilon})=v_\ast(\bb{\partial^*K_\varepsilon})$ holds a.e. $\varepsilon$. This implies
\begin{align*}
\int_{K_\varepsilon}\ud(v^*\beta)=\int_{\partial^*K_\varepsilon}v^*\beta
\end{align*}
for smooth $(n-1)$-forms $\beta$ on $\mathcal A_d(\R^n)$ (cf. also \cite[Lemma 5.3]{iko23}), which in turn implies \eqref{eq:gen-stokes} for a.e. $\varepsilon$. Using \eqref{eq:gen-stokes} we may estimate

\begin{align*}
E_n(\minv f;K)&\le \int_{K_\varepsilon}\|D\minv f\|^n\ud x\le d^{n/2-1}K_I\int_{K_\varepsilon}\star \minv f^*\omega=d^{n/2-1}K_I\int_{K_\varepsilon}\minv f^*\omega\\
&=d^{n/2-1}K_I\int_{K_\varepsilon}u^*\omega\le d^{n/2-1}K_IE_n(u;K_\varepsilon)
\end{align*}
for a.e. $\varepsilon>0$. The last inequality above follows from the pointwise inequalities $\star u^*\omega\le \J u\le \|Du\|^n$, cf. Lemma \ref{lem:star-vs-jacobian}. Since $\leb^n(K_\varepsilon\setminus K)\to 0$ as $\varepsilon\to 0$, we obtain 
\begin{align*}
E_n(\minv f;K)\le d^{n/2-1}K_IE_n(u;K).
\end{align*}

Similarly, 
\begin{align*}
A(\minv f;K)\le \int_{K_\varepsilon} \J \minv f\ud x\le K_I\int_{K_\varepsilon}\minv f^*\omega=K_I \int_{K_\varepsilon}u^*\omega\le K_IA(u;K_\varepsilon)
\end{align*}
(cf. Remark \ref{rmk:jacobian-vs-star} and Lemma \ref{lem:star-vs-jacobian}) yielding 
\begin{align*}
A(\minv f;K)\le K_IA(u;K)
\end{align*}
at the limit $\varepsilon\to 0$. This completes the proof.
\end{proof}

\begin{remark}
The use of pull-back theory and the generalized Stokes theorem could also be used to give another proof of Theorem \ref{thm:higher-integrability}. Indeed, it yields a reverse Hölder inequality without a Caccioppoli inequality, and the higher integrability is then a standard consequence of a theorem of Gehring. We omit the details.
\end{remark}

We end this section with a proof of Corollary \ref{cor:QR-curve}, which is essentially a consequence of Theorems \ref{thm:higher-integrability} and \ref{thm:quasimin}.

\begin{proof}[Proof of Corollary \ref{cor:QR-curve}]
Claim (i) follows immediately from Theorem \ref{thm:higher-integrability}, Corollary \ref{cor:barycenter}(i) and the observation that $g_f=b\circ \minv f$.

To prove (ii), let $h\in W^{1,n}_{loc}(f(\Omega),\R^n)$ be such that $\{h\ne g_f\}$ is essentially contained in a compact subset $K\subset f(\Omega)$. A straightforward calculation shows that if $R>0$, then the multivalued map $R\minv f$, defined by $R\minv f(x)=\sum_{y\in f\inv(x)}\iota(f,x)\bb{Rx}$, is an energy quasi-minimizer with the same constant as $\minv f$ and $\|D(R\minv f)\|=R\|D\minv f\|$. Now consider the multivalued function 
\begin{align*}
    h_R=\mathcal L\inv\circ (R\cdot h,\pi_\mathcal Z\circ\minv f)
\end{align*}
Then $\{h_R\ne \minv f\}$ is essentially contained in $K$, and by Theorem \ref{thm:quasimin} and the discussion above we have 
\begin{align*}
\int_KR^n\|D\minv f\|^n\ud x\le d^{n/2-1}K_I\int_K\|Dh_R\|^n\ud x.
\end{align*}
By Corollary \ref{cor:barycenter}(ii) this implies 
\begin{align*}
\int_Kd^{n/2}R^n\|Dg_f\|^n\le d^{n/2-1}K_I\int_K(dR^2\|Dh\|^2+\|D(\pi_\mathcal Z\circ\minv f)\|^2)^{n/2}\ud x.
\end{align*}
Dividing by $R^n$ and taking the limit $R\to +\infty$ we obtain
\begin{align*}
\int_K\|Dg_f\|^n\ud x\le d^{n/2-1}K_I\int_K\|Dh\|^n\ud x.
\end{align*}

\end{proof}

\section{Multivalued inverse of QR map as a QC map}\label{sec:QC}
Throughout this section we assume that $f:\Omega\to\R^n$ is a proper quasiregular map of degree $d$. The main result of this section is the geometric quasiconformality of the multi-valued inverse onto its image, Theorem \ref{thm:geom-QC}, whose proof is presented in Section \ref{sec:geom-QC}.

\subsection{Image of multivalued inverse}\label{sec:Omega_f-prop}

Consider the space $\Omega_f:=\minv f(f(\Omega))$ equipped with the metric $d_\mathcal A$ from $\mathcal A_d(\Omega)$. We moreover equip $\Omega_f$ with its Hausdorff $n$-measure $\Ha^n$. Since $\minv f:f(\Omega)\to \Omega_f$ is a homeomorphism, $\Omega_f$ is an open $n$-manifold. Moreover, $\Ha^n$ is locally finite by the area-formula.

\begin{lemma}\label{lem:basic-prop-of-Omega_f}
Let $\Ha^n$ be the Hausdorff $n$-measure on $\Omega_f$. Then
\begin{itemize}
\item[(i)] $\Omega_f$ is $n$-rectifiable;
    \item[(ii)] $\Ha^n$ is locally finite;
    \item[(iii)] $\Ha^n(\minv f(f(B_f)))=0$;
\end{itemize}
\end{lemma}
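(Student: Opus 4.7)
The plan is to prove all three claims by combining the area formula from Proposition \ref{prop:area-formula} with the Sobolev regularity from Corollary \ref{cor:sob-reg} and the Lusin property $(N)$.

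For (ii), I would argue as follows. Since $\minv f : f(\Omega) \to \Omega_f$ is a homeomorphism, each compact $L \subset \Omega_f$ pulls back to a compact set $K := \minv f{}^{-1}(L) \subset f(\Omega)$. Applying the area formula \eqref{eq:area-formula} with $g = \chi_L$ gives
\[
\Ha^n(L) = \int_K J\minv f(y)\,dy.
\]
The metric Jacobian satisfies $J\minv f \le C_n |D\minv f|^n$ almost everywhere, and $|D\minv f| \in L^n_{loc}(f(\Omega))$ by Corollary \ref{cor:sob-reg}. Hence the right-hand side is finite and $\Ha^n$ is locally finite on $\Omega_f$.

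For (iii), note first that $B_f \subset \Omega$ is closed and $f$ is proper, so $f(B_f)$ is a closed (hence Borel) subset of $f(\Omega)$, and similarly $\minv f(f(B_f))$ is Borel in $\Omega_f$. Moreover $|f(B_f)| = 0$ by \cite[Chapter II, Theorem 7.4]{ric93}, as already invoked in the proof of Theorem \ref{thm:wug}. Applying the area formula with $g = \chi_{\minv f(f(B_f))}$, and observing that $\chi_{\minv f(f(B_f))} \circ \minv f = \chi_{f(B_f)}$ (since $\minv f$ is injective), we obtain
\[
\Ha^n(\minv f(f(B_f))) = \int_{f(B_f)} J\minv f(y)\,dy = 0.
\]

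For (i), the strategy is a Lusin-type decomposition. Fix a relatively compact open set $V \Subset f(\Omega)$; it suffices to show that $\minv f(V)$ is $n$-rectifiable, since $f(\Omega)$ is exhausted by such sets. Applying Proposition \ref{prop:lusin-approx} to $\minv f|_V \in W^{1,n}(V, \mathcal A_d(\R^n))$ with parameters $\lambda_j \to \infty$, we obtain Lipschitz maps $g_j : V \to \mathcal A_d(\R^n)$ such that the sets $E_j := \{y \in V : \minv f(y) = g_j(y)\}$ satisfy $|V \setminus E_j| \to 0$. Then $N := V \setminus \bigcup_j E_j$ has Lebesgue measure zero, and
\[
\minv f(V) = \bigcup_j g_j(E_j) \cup \minv f(N).
\]
Each $g_j(E_j)$ is a Lipschitz image of a subset of $\R^n$. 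By the Lusin $(N)$ property of $\minv f$ (Proposition \ref{prop:area-formula}), $\Ha^n(\minv f(N)) = 0$. Composing with a bi-Lipschitz embedding of $\mathcal A_d(\R^n)$ into some $\R^N$ if desired (cf.\ \cite[Theorem 2.1]{del11}), this exhibits $\Omega_f$ as a countable union of Lipschitz images of subsets of $\R^n$ together with a $\Ha^n$-null set, which is the definition of $n$-rectifiability. None of the three steps poses a genuine obstacle; the only point requiring care is the measurability of $\minv f(f(B_f))$ in (iii), which is handled by the properness of $f$ and the homeomorphism property of $\minv f$.
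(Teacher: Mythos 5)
Your proposal is correct and amounts to a faithful unpacking of the paper's one-line proof (``All three claims follow from Proposition~\ref{prop:area-formula}''). For (ii) and (iii) you apply the area formula \eqref{eq:area-formula} with $g=\chi_L$ and $g=\chi_{\minv f(f(B_f))}$ respectively, and for (ii) you correctly supply the missing ingredient that $\J\minv f\le C_n|D\minv f|^n$ together with the $L^n_{loc}$ bound from Corollary~\ref{cor:sob-reg} to get local finiteness. For (i), where the paper implicitly relies on rectifiability of the image being built into the cited area-formula result from~\cite{kar07}, you instead give a direct argument via the Lusin-type Lipschitz approximation (Proposition~\ref{prop:lusin-approx}) combined with the Lusin property~$(N)$ from Proposition~\ref{prop:area-formula}; this is a standard and entirely valid route and is arguably more self-contained than the paper's terse citation. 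One tiny remark: the bi-Lipschitz embedding into Euclidean space that you mention is not needed, since $n$-rectifiability of a metric space is already defined via Lipschitz images of subsets of $\R^n$, which is exactly what your decomposition produces.
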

\begin{proof}
All three claims follow from Proposition \ref{prop:area-formula}.
\end{proof}

Next, we establish the upper Ahlfors $n$-regularity of the Hausdorff measure on $\Omega_f$. 
\begin{proposition}\label{prop:ball-meas-upper-bound}
For any compact set $K\subset \Omega_f$ there exists $r_K>0$ such that
\begin{align*}
    \Ha^n(B_{\Omega_f}(z,r))\le \omega_nd^{n/2}K_IK_O r^n,\quad z\in K,\ r<r_K.
\end{align*}
\end{proposition}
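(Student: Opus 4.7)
The plan is to apply the area formula from Proposition \ref{prop:area-formula} to write
\[
\Ha^n(B_{\Omega_f}(z,r)) = \int_U \J \minv f \, \ud y, \qquad U := \minv f\inv(B_{\Omega_f}(z,r)) \subset f(\Omega),
\]
and then reduce the right-hand side to the Lebesgue measure of $f\inv(U) \subset \Omega$, which I will bound by $d\omega_n r^n$ via a geometric argument on the Almgren metric. For $r_K$ we will choose $r < r_K$ small enough that $\overline U$ is contained in a fixed compact subset of $f(\Omega)$, which is possible since $\minv f\inv(K)$ is compact in $f(\Omega)$ by continuity of $\minv f\inv$ (Lemma \ref{lem:homeo}) and $F\inv(K)$ is compact in $\Omega$ (as $F := \minv f \circ f$ is proper).

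The first main ingredient is the pointwise bound $\J \minv f \le |D\minv f|^n$ almost everywhere. Since $\Ha^n(\minv f(f(B_f)))=0$, it suffices to work at points $y$ admitting a local decomposition $\minv f = \bb{g_1,\ldots,g_d}$ with $g_j = f|_{U_j}\inv$ smooth and quasiconformal. At such $y$ the metric derivative is the inner-product seminorm with matrix $A(y) := \sum_j Dg_j(y)^T Dg_j(y)$, so $\J\minv f(y) = \sqrt{\det A(y)}$ and $|D\minv f(y)|^2 = \lambda_{\max}(A(y))$. Since $A_{ii} = \sum_j |Dg_j e_i|^2 \le \lambda_{\max}(A)$, Hadamard's inequality yields
\[
\J\minv f \;=\; \sqrt{\det A} \;\le\; \prod_{i=1}^n \sqrt{A_{ii}} \;\le\; |D\minv f|^n.
\]
Combining this with Corollary \ref{cor:sob-reg} gives
\[
\Ha^n(B_{\Omega_f}(z,r)) \;\le\; \int_U |D\minv f|^n\,\ud y \;\le\; d^{n/2-1} K_I K_O \, |f\inv(U)|.
\]

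The remaining step is the estimate $|f\inv(U)| \le d\omega_n r^n$, which I view as the main obstacle. Writing $F\inv(z) = f\inv(y_0) = \{x_1,\ldots,x_m\}$ with $m \le d$ and $y_0 = \minv f\inv(z)$, and noting $f\inv(U) = F\inv(B_{\Omega_f}(z,r))$, the claim reduces to showing
\[
F\inv(B_{\Omega_f}(z,r)) \;\subset\; \bigcup_{k=1}^m B(x_k,r).
\]
The key observation is that for any $x\in\Omega$, the tuple $F(x) = \sum_{x' \in f\inv(f(x))} \iota(f,x')\bb{x'}$ contains $x$ with multiplicity $\iota(f,x) \ge 1$, while $z = \sum_k \iota(f,x_k)\bb{x_k}$. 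In any permutation realizing the infimum in the definition of $d_\mathcal A(F(x),z)$, each copy of $x$ is matched with some $x_k$, contributing $|x-x_k|^2 \ge \min_k|x-x_k|^2$ to the squared Almgren distance, while all other contributions are non-negative. Hence
\[
\min_k |x-x_k|^2 \;\le\; d_\mathcal A(F(x),z)^2,
\]
so $d_\mathcal A(F(x),z) < r$ forces $x$ into $\bigcup_k B(x_k,r)$, as claimed. This gives $|f\inv(U)| \le m\omega_n r^n \le d\omega_n r^n$, and combining all the estimates yields $\Ha^n(B_{\Omega_f}(z,r)) \le \omega_n d^{n/2} K_I K_O r^n$. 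The elegance of the Almgren-distance argument is that it requires no uniform separation of the preimages $x_k$ as $z$ varies over $K$, which would fail near branch values; it relies only on the fact that $x$ is one of its own $F$-images.
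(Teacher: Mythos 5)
Your proof is correct and follows the same overall architecture as the paper's: area formula to get $\Ha^n(B_{\Omega_f}(z,r))=\int_U\J\minv f\,\ud y$, a pointwise Jacobian estimate to reduce to $d^{n/2-1}K_IK_O|f\inv U|$, and a geometric argument yielding $|f\inv U|\le d\omega_n r^n$. Two intermediate steps are handled differently, both in your favor. For the Jacobian bound, the paper invokes $\J\minv f\le K_IK_O(f_\ast(|Df|^{-2}))^{n/2}$, which is \eqref{eq:jacob-vs-H} and is only established later in Proposition~\ref{prop:QC}; you prove the self-contained metric-Hadamard estimate $\J\minv f\le|D\minv f|^n$ and invoke Corollary~\ref{cor:sob-reg}, which is already available, so no forward reference is needed (both routes rest on the same curvewise estimates of Theorem~\ref{thm:wug}). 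For the geometric step, the paper writes $f\inv U=\bigsqcup_k U_F(x_k,r)$ (choosing $r_K$ so that the normal-neighbourhood decomposition holds) and claims $U_F(x_k,r)\subset B(x_k,r)$; that containment tacitly assumes the optimal Almgren matching pairs $y\in U_F(x_k,r)$ with $x_k$ itself, which is not immediate. Your observation—that since $x$ is one of the components of $F(x)$, whatever it is matched with is some $x_k$, giving $\min_k|x-x_k|\le d_\mathcal A(F(x),z)$ for \emph{every} $x\in\Omega$—yields $F\inv B(z,r)\subset\bigcup_k B(x_k,r)$ directly, for every $r$, with no normal-neighbourhood structure needed. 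This is genuinely cleaner and, as you note, entirely insensitive to how close the preimage points are to one another. One small slip worth fixing: the inverse branches $g_j=(f|_{U_j})\inv$ are quasiconformal, hence differentiable a.e.\ but not smooth, which is what you actually use.
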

\begin{proof}
For any $z\in K$, let $r_z>0$ be such that
\[
(\minv f\circ f)\inv B(z,r)=\bigcup_{x\in(\minv f\circ f)\inv(z)}U_{\minv f\circ f}(x,r)
\]
and $U_{\minv f\circ f}(x,r)$ is a normal neighbourhood of $x$ for each $x\in(\minv f\circ f)\inv(z)$. Let $r_K$ be the Lebesgue number of the cover $\{B(z,r_z)\}_{z\in K}$. For any $z\in K$ there exists $z_0\in K$ such that 
$B(z,r)\subset B(z_0,r_{z_0})$ for $r<r_K$. Note that
\begin{align*}
U_{\minv f\circ f}(x,r)\subset B(x,r),\quad x\in(\minv f\circ f)\inv(z).
\end{align*}
Indeed, for any $y\in U_{F\circ f}(x,r)$ we have $$|y-x|\le d(\minv f(f(y)),F\minv f(f(x))=d(\minv f\circ f(y),z)<r.$$ Now the area formula \eqref{eq:area-formula} yields
\begin{align*}
\Ha^n(B(z,r))&=\int_{\minv f\inv B(z,r)}\J \minv f\ud y\le K_IK_O\int_{\minv f\inv B(z,r)}(f_\ast(|Df|^{-2}))^{n/2}\ud y\\
&\le d^{n/2-1}K_IK_O\int_{\minv f\inv B(z,r)}f_\ast(|Df|^{-n})\ud y\\
&=d^{n/2-1}K_IK_O\int_{f\inv(\minv f\inv B(z,r))} |Df|^{-n}\J f\ud x\\
&=d^{n/2-1}K_IK_O\sum_{x\in (\minv f\circ f)\inv(z)}\int_{U_{\minv f\circ f}(x,r)} \frac{\J \minv f}{|Df|^n}\ud x\\
&\le d^{n/2-1}K_IK_O\sum_{x\in (\minv f\circ f)\inv(z)}|B(x,r)|\le \omega_nd^{n/2}K_IK_Or^n.
\end{align*}
\end{proof}

\begin{remark}
Using Proposition \ref{prop:hausd-meas-est} (or arguing along the lines in the proof above of Proposition \ref{prop:ball-meas-upper-bound}) we can obtain the lower bound
\begin{align*}
\Ha^n(B(z,r))\ge \frac 1{K_Od}\sum_{x\in (\minv f\circ f)\inv(z)}|U_{\minv f\circ f}(x,r)|,\quad z\in K,\ r<r_K.
\end{align*}
A lower bound for $\frac{|U_{\minv f\circ f}(x,r)|}{r^n}$, together with the LLC-property, would imply a local $(1,1)$-Poincar\'e inequality for  $\Omega_f$ by a deep result of Semmes \cite{sem96}.
\end{remark}
In the absence of the lower bound we instead establish, in the next proposition, an infinitesimal $n$-Poincar\'e inequality for $\Omega_f$. This is a weaker qualitative notion compared to $p$-Poincar\'e inequalities, but it is sufficient in order to obtain the geometric quasiconformality of the multi-valued inverse.

A metric measure space $X=(X,d,\mu)$ is said to satisfy an infinitesimal $p$-Poincar\'e inequality ($p$-PI for short), if 
\begin{align}\label{eq:inf-n-PI}
|Dg|_p=\Lip g\quad \mu\textrm{-a.e. on }X
\end{align}
for every $g\in \LIP(X)$. Here $|Dg|_p$ denotes the minimal $p$-weak upper gradient of $g$. 

\begin{proposition}\label{prop:inf-n-PI}
The space $\Omega_f=(\Omega_f,d_\mathcal A,\Ha^n)$ satisfies \eqref{eq:inf-n-PI} with $p=n$ for every $h\in \LIP(\Omega_f)$.
\end{proposition}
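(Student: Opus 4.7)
My plan is as follows. The bound $|Dh|_n \le \Lip h$ $\mathcal{H}^n$-a.e.\ is automatic since $\Lip h$ is always an $n$-weak upper gradient of a Lipschitz function; only the reverse inequality $\Lip h \le |Dh|_n$ requires work. The strategy is to transfer the question to Euclidean space using the local decomposition of $\minv f$ away from the branch, apply the classical identification on $\R^n$, and match up the two sides through the tangent-plane structure of $\Omega_f$ at generic points.

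First, since $\mathcal{H}^n(\minv f(f(B_f)))=0$ by Lemma \ref{lem:basic-prop-of-Omega_f}(iii), it suffices to prove $\Lip h(z) \le |Dh|_n(z)$ at $\mathcal{H}^n$-a.e.\ $z = \minv f(y)$ with $y \in f(\Omega) \setminus f(B_f)$. For such $y$, I choose $r>0$ so that $\minv f|_{B(y,r)} = \bb{g_1,\ldots,g_d}$ with each $g_j \colon B(y,r) \to U_f(x_j,r)$ a $K_I$-quasiconformal homeomorphism (cf.\ the proof of Theorem \ref{thm:QR-curve}). For any $n$-weak upper gradient $\rho$ of $h$, Theorem \ref{thm:wug} implies that $(K_IK_O)^{1/n}(\rho \circ \minv f)H$ is an $n$-weak upper gradient of the pull-back $\tilde h := h \circ \minv f$ on $B(y,r)$. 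Since $\tilde h$ is a Lipschitz function on an open subset of $\R^n$, classical Rademacher and the Euclidean infinitesimal $n$-PI give $|\nabla \tilde h|(y') \le (K_IK_O)^{1/n}\rho(\minv f(y'))H(y')$ for $\mathcal{L}^n$-a.e.\ $y' \in B(y,r)$.

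The last step is a pointwise chain-rule identification. At $\mathcal{L}^n$-a.e.\ $y'$ each $g_j$ is classically differentiable with invertible differential, and by the $n$-rectifiability of $\Omega_f$ (Lemma \ref{lem:basic-prop-of-Omega_f}(i)) the point $\minv f(y')$ admits an approximate tangent $n$-plane. I plan to verify that, restricted to this tangent plane, both $\Lip h(\minv f(y'))$ and $|Dh|_n(\minv f(y'))$ reduce to the operator norm of the tangential differential of $h$, the directional metric derivatives of $\minv f$ entering symmetrically on both sides through the scalar $H(y')$. Combined with the $\R^n$-bound on $|\nabla\tilde h|$ above, this forces $\Lip h(z) \le |Dh|_n(z)$ $\mathcal{H}^n$-a.e.\ and hence the claimed equality.

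The main obstacle is precisely the cancellation of constants in the final step: a naive quantitative chain rule only delivers $\Lip h \le (K_IK_O)^{1/n}|Dh|_n$, whereas the infinitesimal PI asks for genuine equality. This is resolved by switching from the quantitative curve-wise bounds of Theorem \ref{thm:wug} to a pointwise analysis of the multi-valued differential via Theorem \ref{thm:multi-valued-rademacher} at points of simultaneous classical differentiability of all branches $g_j$, combined with the intrinsic tangential description of both $\Lip h$ and $|Dh|_n$ on the rectifiable set $\Omega_f$; the distortion constants then factor out identically in the numerator and denominator and disappear.
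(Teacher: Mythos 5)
Your reduction to the Euclidean side through the branches $g_1,\ldots,g_d$ away from $\minv f(f(B_f))$ is the right setting and matches the paper's starting point, but the final step has two genuine gaps. First, the claim in your step 4 --- that $(K_IK_O)^{1/n}(\rho\circ\minv f)H$ is an $n$-weak upper gradient of $\tilde h:=h\circ\minv f$ on $B(y,r)$ whenever $\rho$ is an $n$-weak upper gradient of $h$ on $\Omega_f$ --- does not follow from Theorem~\ref{thm:wug} alone. Theorem~\ref{thm:wug} controls the speed of $\minv f\circ\gamma$, but you also need to know that the $\Mod_n$-null exceptional family for $(h,\rho)$ in $\Omega_f$ pulls back under $\minv f$ to a $\Mod_n$-null family in $f(\Omega)$; that is exactly the modulus inequality $\Mod_n\Gamma\le K_IK_O\Mod_n\minv f(\Gamma)$ of Proposition~\ref{prop:QC}, which your proposal never invokes (the paper's proof cites Proposition~\ref{prop:QC} for precisely this purpose).

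Second, the ``cancellation'' step as described cannot work, because $H$ is the wrong object to cancel. The scalar $H(y)=(f_\ast(\|Df\|^{-2})(y))^{1/2}$ only sandwiches the directional metric derivative $\md_y\minv f(v)$ up to the factor $(K_IK_O)^{1/n}$, so any chain rule that passes through $H$ on both sides of the estimate is doomed to retain that factor --- precisely the issue you flag. To get exact cancellation one must keep the full direction-dependent quantity $\md_y\minv f(v)$ throughout, which forces a directional argument: one works with the curve families $\Gamma_v$ of lines in a fixed direction $v$ to obtain, for a.e.\ $y$, the estimate $|\partial_v\tilde h(y)|\le \rho(\minv f(y))\,\md_y\minv f(v)$, and separately identifies $\Lip h(\minv f(y))=\sup_{v\neq0}|\nabla\tilde h(y)\cdot v|/\md_y\minv f(v)$ from metric differentiability of $\minv f$ together with non-degeneracy of $\md_y\minv f$; only then do the $\md_y\minv f(v)$ terms cancel. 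Your appeal to an ``intrinsic tangential description of $|Dh|_n$'' is also circular: in a general rectifiable metric measure space the minimal $n$-weak upper gradient need \emph{not} coincide with the tangential operator norm --- the two agree exactly when the infinitesimal $n$-PI holds --- so it cannot be taken as an input. The paper sidesteps all of this by packaging the directional estimate into the $n$-independence of the 1-Lipschitz chart $F_0^{-1}$ and then invoking the Cheeger/$n$-weak chart machinery of \cite{teri-syl24,teri-toni-enrico}. A direct version of your argument is salvageable, but it must be run directionally, it must carry $\md_y\minv f(\cdot)$ rather than $H$, and it must cite Proposition~\ref{prop:QC} for the modulus pushforward.
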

In particular, $\Omega_f$ is an $n$-dimensional differentiability space, cf. \cite[Theorem 1.1]{teri-toni-enrico}. In the proof we use the terminology and techniques from \cite{teri-syl24}. Since these are used only in this proof we do not recall them here but refer the reader instead to \cite{teri-syl24}.
\begin{proof}
The map $F=\minv f\circ f:\Omega\to \Omega_f$ is a proper branched cover of degree $d$. It is almost everywhere metrically differentiable and satisfies Lusin's properies $(N)$ and $(N\inv)$. Moreover $F$ also satisfies the modulus inequality
\begin{align*}
\Mod_n\Gamma\le C\Mod_nF(\Gamma)
\end{align*}
for path families in $\Omega$, cf. Proposition \ref{prop:QC} and \cite[Lemma 3.6]{iko-luc-pas24}.
 
By Lemma \ref{lem:basic-prop-of-Omega_f} we have that $\Ha^n(F(B_f))=0$. For every $z_0\in \Omega_f\setminus F(B_f)$ there is $r>0$ so that $F_{0}=F|_{U_F(x_0,r)}: U_{F}(x_0,r)\to B(z_0,r)=:B_0$ is an analytically quasiconformal homeomorphism for every $x_0\in F\inv(z_0)$. Since $d_\mathcal A(F_x(y),F_x(y'))\ge |y-y'|$ we obtain that $F_{0}\inv:B_0\to U_0:=U_{F}(x_0,r)$ is 1-Lipschitz. 

We claim that $(B_0,F_{0}\inv)$ is both a Cheeger chart and an $n$-weak chart. Since $F_0$ is metrically differentiable everywhere and has $(N)$ and $(N\inv)$, it is not difficult to see that $(B_0,F_0)$ is a Cheeger chart.\footnote{Indeed, a direct calculation yields that $(B_0,F_0\inv)$ is a weak Cheeger chart. Since $B_0$ is open, it follows that $(B_0,F_0\inv)$ is a Cheeger chart. See \cite{iko-luc-pas24} for the terminology.} It also follows that, if $(B_0,F_0)$ is $n$-independent (in the sense of \cite{teri-syl24}) then it is maximal, and thus an $n$-weak chart. 

Observe that, for $\Mod_n$-a.e. $\gamma$ in $\Omega$ we have
\begin{align}\label{eq:metr-sp}
|(F_0\circ\gamma)_t'|=\md_{\gamma_t}F_0(\gamma_t')\quad\textrm{a.e. }t.
\end{align}
Let $\Gamma_0$ be the $\Mod_n$-null path family where \eqref{eq:metr-sp} fails. Given $\xi\in (\R^n)^*$ with norm 1 and $v\in S^{n-1}$ such $\xi(v)=1$, let $\Gamma_v$ be the path family of curves in $U_0$ with $\gamma_t'=v$. Then $0<\Mod_n(\Gamma_v\setminus \Gamma_0)\le C\Mod_nF(\Gamma_v\setminus \Gamma_0)$ and, for $\Mod_n$-a.e. curve $\gamma$ in $F(\Gamma_v\setminus\Gamma_0)$, we have 
\begin{align*}
|\gamma_t'|=\md_{F_0\inv(\gamma_t)}F_0(v),\quad (F_0\inv\circ\gamma)_t'=v\quad\textrm{a.e. }t.
\end{align*}
Thus
\begin{align*}
\frac{\xi((F_0\inv\circ\gamma)_t')}{|\gamma_t'|}=\frac{1}{\md_{F_0\inv(\gamma_t)}F_0(v)}\ge \frac{1}{|DF_0|(F_0\inv(\gamma_t))}.
\end{align*}
This implies
\begin{align*}
|D(\xi\circ F_0\inv)|_n(x)\ge \frac{1}{|DF_0|(F_0\inv(x))}\quad\textrm{a.e. }x\in B_0.
\end{align*}
Consequently $F_0\inv$ is $n$-independent in $B_0$, completing the proof that $(B_0,F_0\inv)$ is a $n$-weak chart. 

Now since $(B_0,F_0\inv)$ is an $n$-dimensional Cheeger chart and $n$-weak chart, it follows from \cite[Theorem 1.11]{teri-syl24} and \cite[Theorem 1.3]{teri-toni-enrico} that $\Lip g=|Dg|_n$ $\Ha^n$-a.e. on $\Omega_f$ for any $g\in \LIP(\Omega_f)$. This completes the proof.
\end{proof}

\subsection{Quasiconformality of multi-valued inverse}\label{sec:geom-QC}
It is not difficult to show that $\minv f$ is metrically quasiconformal, i.e. $H(\minv f,x)\le H$ for all $x\in f(\Omega)$, where $\displaystyle H(\minv f,x)=\limsup_{r\to 0}\frac{L_{\minv f}(x,r) }{l_{\minv f}(x,r)}$ and 
\begin{align*}
L_{\minv f}(x,r)&=\sup\{d_\mathcal A(\minv f(x),\minv f(y)):\ d(x,y)\le r\}\\
l_{\minv f}(x,r)&=\sup\{d_\mathcal A(\minv f(x),\minv f(y)):\ d(x,y)\ge r\}.
\end{align*}
In fact we have the following.
\begin{proposition}
The multi-valued inverse $\minv f$ of $f$ satisfies, for small $r>0$:
\begin{align*}
    H_{\minv f}(y,r)^2\le \sum_{x\in f\inv(y)}H_f^*(x,r)^2,\quad y\in f(\Omega).
\end{align*}
\end{proposition}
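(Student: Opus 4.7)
The plan is to estimate $L_{\minv f}(y,r)$ and $l_{\minv f}(y,r)$ separately, using the decomposition of $f\inv B(y,r)$ into normal neighbourhoods, and then combine them via a weighted-average inequality. Throughout, I interpret $L_f^*(x,r)$ and $l_f^*(x,r)$ as the outer and inner radius of the normal neighbourhood $U_f(x,r)$, i.e.
\[
L_f^*(x,r)=\sup\{|x'-x|:\,x'\in \overline U_f(x,r)\},\qquad l_f^*(x,r)=\inf\{|x'-x|:\,x'\notin U_f(x,r)\},
\]
and $H_f^*(x,r)=L_f^*(x,r)/l_f^*(x,r)$.

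Fix $y\in f(\Omega)$ and take $r>0$ so small that $f\inv B(y,r)=\bigsqcup_{x\in f\inv(y)}U_f(x,r)$, with each $U_f(x,r)$ a normal neighbourhood of $x$. First, for the upper bound on $L_{\minv f}(y,r)^2$: given $y'\in B(y,r)$, write $f\inv(y')=\bigsqcup_{x\in f\inv(y)}\big(f\inv(y')\cap U_f(x,r)\big)$, and use the matching between copies in $\minv f(y)=\sum_x\iota(f,x)\bb x$ and $\minv f(y')=\sum_x\sum_{x'\in f\inv(y')\cap U_f(x,r)}\iota(f,x')\bb{x'}$ (together with $\iota(f,x)=\sum_{x'}\iota(f,x')$) exactly as in the proof of Lemma~\ref{lem:homeo}. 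Every paired point $x'$ lies in $U_f(x,r)$, so
\[
d_\mathcal A(\minv f(y),\minv f(y'))^2\le\sum_{x\in f\inv(y)}\iota(f,x)L_f^*(x,r)^2.
\]
Taking supremum over $y'\in B(y,r)$ yields the bound on $L_{\minv f}(y,r)^2$.

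Next, for the lower bound on $l_{\minv f}(y,r)^2$: if $|y'-y|\ge r$, then $f\inv(y')\cap f\inv B(y,r)=\varnothing$, so every preimage of $y'$ lies outside every $U_f(x,r)$. Consequently, for any pairing $\sigma\in S_d$ in the definition of $d_\mathcal A$, each paired distance satisfies $|x-x'|\ge l_f^*(x,r)$, where $x$ labels the copy in $\minv f(y)$. Hence
\[
d_\mathcal A(\minv f(y),\minv f(y'))^2\ge\sum_{x\in f\inv(y)}\iota(f,x)l_f^*(x,r)^2,
\]
and taking infimum over such $y'$ gives the lower bound on $l_{\minv f}(y,r)^2$.

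Finally, combining the two estimates,
\[
H_{\minv f}(y,r)^2=\frac{L_{\minv f}(y,r)^2}{l_{\minv f}(y,r)^2}\le \frac{\sum_{x}\iota(f,x)L_f^*(x,r)^2}{\sum_{x}\iota(f,x)l_f^*(x,r)^2}\le \max_{x\in f\inv(y)}\frac{L_f^*(x,r)^2}{l_f^*(x,r)^2}\le \sum_{x\in f\inv(y)}H_f^*(x,r)^2,
\]
where the middle step uses the elementary weighted-average bound $\sum w_xa_x\big/\sum w_xb_x\le\max_x(a_x/b_x)$ for positive $w_x,a_x,b_x$. The main (very minor) obstacle is the bookkeeping of multiplicities in the pairing argument; the weighted-average step then lets the $\iota(f,x)$ factors cancel out of the final ratio.
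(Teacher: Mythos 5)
Your proof is correct and follows essentially the same route as the paper: decompose $f\inv B(y,r)$ into normal neighbourhoods, bound $L_{\minv f}(y,r)^2$ above by $\sum_x\iota(f,x)L_f^*(x,r)^2$ and $l_{\minv f}(y,r)^2$ below by $\sum_x\iota(f,x)l_f^*(x,r)^2$, and finish with the weighted-average inequality. You are actually a bit more careful than the paper on the lower bound for $l_{\minv f}$, correctly arguing over \emph{all} pairings in the Almgren metric (since $d_\mathcal A$ is a minimum over $S_d$) rather than a single one, and you spell out the final weighted-average step $\sum w_ja_j/\sum w_jb_j\le\max_j(a_j/b_j)\le\sum_j(a_j/b_j)$ that the paper leaves implicit.
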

Here $\displaystyle H^*(f,x)=\limsup_{r\to 0}\frac{L_{f}^*(x,r) }{l_{f}^*(x,r)}$ and 
\begin{align*}
L_{f}^*(x,r)&=\sup\{d_\mathcal A(\minv f(x),\minv f(y)):\ y\in U_f(x,r)\}\\
l_{f}^*(x,r)&=\sup\{d_\mathcal A(\minv f(x),\minv f(y)):\ y\in \partial U_f(x,r)\}.
\end{align*}

\begin{proof}
Denote $f\inv(y)=\{x_1,\ldots,x_m\}$ where $\iota(f,x_1)+\cdots+\iota(f,x_m)=d$. Let $r$ be small enough so that $f\inv B(y,r)=\bigcup_j^mU_f(x_j,r)$ and $U_j:=U_f(x_j,r)$ is a normal domain of $x_j$, $j=1,\ldots,m$. For $z\in B(y,r)$ we have $f\inv(z)=\bigcup_j^mf\inv(z)\cap U_j$ and
\begin{align*}
d_\mathcal A^2(\minv f(y),\minv f(z))= \sum_j^m\sum_{x'\in f\inv(z)\cap U_j}|x_j-x'|^2\le \sum_j^m\iota(f,x_j)L^*_f(x_j,r)^2.
\end{align*}
This implies $L_{\minv f}(y,r)^2\le f_\ast L^*_f(\cdot,r)(y)$. Similarly, for $z\in \partial B(y,r)$,
\begin{align*}
d_\mathcal A^2(\minv f(y),\minv f(z))= \sum_j^m\sum_{x'\in f\inv(z)\cap U_j}|x_j-x'|^2\ge \sum_j^m\iota(f,x_j)l^*_f(x_j,r)^2,
\end{align*}
implying $l_{\minv f}(y,r)^2\ge f_\ast(l^*_f(\cdot,r)^2)$. Together these estimates yield the claim. 
\end{proof}
The next proposition states that $\minv f$ is analytically quasiconformal.
\begin{proposition}\label{prop:QC}
The multi-valued inverse $\minv f:f(\Omega)\to \Omega_f$ of $f$ is an analytically quasiconformal homeomorphism, i.e.
\begin{align*}
\Mod_n\Gamma\le K_IK_O\Mod_n\minv f(\Gamma)
\end{align*}
for every path family $\Gamma$ in $f(\Omega)$. Moreover
\begin{align}\label{eq:jacob-vs-H}
(f_\ast(|Df|^{-2}))^{n/2}\le \J \minv f\le {K_IK_O}(f_\ast(|Df|^{-2}))^{n/2}\quad \textrm{almost everywhere.}    
\end{align}
\end{proposition}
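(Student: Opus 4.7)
The plan is to prove the Jacobian bounds~\eqref{eq:jacob-vs-H} first, and then deduce the modulus inequality by a Fuglede-type argument. Since $|f(B_f)|=0$ by \cite[Chapter~II, Proposition~4.14]{ric93}, it suffices to work at $y\in f(\Omega)\setminus f(B_f)$. On a neighbourhood of any such $y$ the same local structure used in the proof of Theorem~\ref{thm:QR-curve} yields a decomposition $\minv f=\bb{g_1,\ldots,g_d}$, where $g_j:B(y,r)\to U_j$ is the quasiconformal inverse of $f|_{U_j}$, with $K_O(g_j)=K_I$ and $K_I(g_j)=K_O$. By Theorem~\ref{thm:multi-valued-rademacher} the metric differential of $\minv f$ at $y$ is the seminorm $s_y(v)=(v^T M(y) v)^{1/2}$ on $\R^n$ with
\[
M(y)=\sum_{j=1}^d Dg_j(y)^T Dg_j(y),
\]
and the metric Jacobian entering the area formula (Proposition~\ref{prop:area-formula}) is $\J\minv f(y)=\sqrt{\det M(y)}$; equivalently, it is the ordinary Jacobian of the local Euclidean lift $z\mapsto(g_1(z),\ldots,g_d(z))\in(\R^n)^d$.

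Each summand $Dg_j^TDg_j$ is positive definite with eigenvalues $\sigma_k(Df(g_j))^{-2}$ ranging from $\|Df(g_j)\|^{-2}$ (smallest) to $\sigma_{\min}(Df(g_j))^{-2}$ (largest). The $K_O$-outer and $K_I$-inner distortion of $f$ gives $\sigma_{\max}(Df)^n\le K_O\J f\le K_IK_O\sigma_{\min}(Df)^n$, hence $\sigma_{\min}(Df)^{-2}\le(K_IK_O)^{2/n}\|Df\|^{-2}$, and so as positive semidefinite matrices
\[
\|Df(g_j(y))\|^{-2}\,I\le Dg_j(y)^T Dg_j(y)\le(K_IK_O)^{2/n}\|Df(g_j(y))\|^{-2}\,I.
\]
Summing over $j$ and recalling that $H(y)^2=\sum_j\|Df(g_j(y))\|^{-2}$ at $y\notin f(B_f)$, we obtain $H^2 I\le M\le (K_IK_O)^{2/n}H^2 I$, and taking determinants yields~\eqref{eq:jacob-vs-H}.

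For the modulus inequality, let $\rho:\Omega_f\to[0,\infty]$ be admissible for $\minv f(\Gamma)$ and set $\rho'(y):=(K_IK_O)^{1/n}\,\rho(\minv f(y))\,H(y)$. By Theorem~\ref{thm:wug}, for $\Mod_n$-a.e.\ $\gamma\in\Gamma$ the curve $\minv f\circ\gamma$ is absolutely continuous with $|(\minv f\circ\gamma)_t'|\le(K_IK_O)^{1/n}H(\gamma_t)|\gamma_t'|$ a.e., so
\[
1\le\int_{\minv f\circ\gamma}\rho\,\ud s\le\int_\gamma\rho'\,\ud s,
\]
i.e.\ $\rho'$ is admissible for $\Gamma$ outside a path family of $n$-modulus zero. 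Combining the upper bound in~\eqref{eq:jacob-vs-H} with the area formula gives
\begin{align*}
\Mod_n\Gamma&\le\int_{f(\Omega)}(\rho')^n\,\ud y=K_IK_O\int_{f(\Omega)}(\rho\circ\minv f)^n H^n\,\ud y\\
&\le K_IK_O\int_{f(\Omega)}(\rho\circ\minv f)^n\J\minv f\,\ud y=K_IK_O\int_{\Omega_f}\rho^n\,\ud\Ha^n,
\end{align*}
and taking infimum over admissible $\rho$ completes the proof. The principal obstacle is identifying the metric Jacobian as $\sqrt{\det M}$ on the regular set; once this is in hand via Karmanova's framework (already invoked in Proposition~\ref{prop:area-formula}), the remaining Jacobian estimate reduces to a clean pointwise matrix inequality, and the modulus step follows from the standard Fuglede recipe.
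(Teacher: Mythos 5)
The proposal is correct. It agrees with the paper's proof for the modulus inequality (both use the Fuglede argument with $\rho'=(K_IK_O)^{1/n}(\rho\circ\minv f)H$, the upper gradient bound from Theorem~\ref{thm:wug}, the inequality $H^n\le\J\minv f$, and the area formula), but it derives the Jacobian bound~\eqref{eq:jacob-vs-H} by a genuinely different route. The paper establishes the metric-differential estimate $H(y)\le\md_y\minv f(v)\le (K_IK_O)^{1/n}H(y)$ by reading it off from the curve-wise bounds of Theorem~\ref{thm:wug} along the foliation of $f(\Omega)$ by lines in a fixed direction $v$, via a Fubini-type argument, and then deduces~\eqref{eq:jacob-vs-H}. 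You instead work pointwise on the full-measure set $f(\Omega)\setminus f(B_f)$: using the local decomposition $\minv f=\bb{g_1,\ldots,g_d}$ (the same one used inside the proof of Theorem~\ref{thm:wug}), you identify the metric differential with $v\mapsto(v^TMv)^{1/2}$ for $M=\sum_j Dg_j^T Dg_j$ and the metric Jacobian with $\sqrt{\det M}$, and then obtain the matrix inequality $H^2I\le M\le(K_IK_O)^{2/n}H^2I$ directly from the singular-value bounds furnished by the distortion of $f$. This is a more elementary and self-contained derivation of~\eqref{eq:jacob-vs-H}; the paper's route leverages the curve-modulus machinery already developed. One small imprecision to fix: you invoke Theorem~\ref{thm:multi-valued-rademacher}, which concerns Lipschitz multi-valued maps, to identify the metric differential of $\minv f$, but $\minv f$ is only Sobolev. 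Either cite the approximate differentiability of Sobolev multi-valued maps (\cite[Corollary~2.7]{del11}), or note that near $y\notin f(B_f)$ each branch $g_j$ is a single-valued quasiconformal map, so classical a.e.\ differentiability suffices and the metric differential is computed by hand from the formula for $d_{\mathcal A}$ when the $g_j(y)$ are distinct.
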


\begin{proof}
Denote $H(y):=(f_\ast(|Df|^{-2}))^{1/2}$ and observe that 
\[
\lim_{h\to 0}\frac{d_\mathcal A(\minv f(y+hv),\minv f(y))}{|h|}=\md_y\minv f(v) \quad\textrm{ a.e. }y\in f(\Omega)
\]
for each $v\in \mathbb S^{n-1}$. By considering the family $\Gamma_v=\{\gamma\subset f(\Omega): \gamma'=v\}$, it follows from a Fubini type argument and Theorem \ref{thm:wug} that 
\begin{align}\label{eq:metric-diff-est}
H(y)\le \md_y\minv f(v)\le (K_IK_O)^{1/n}H(y)\quad \textrm{ for all } v\in \mathbb S^{n-1}.
\end{align}
This implies \eqref{eq:jacob-vs-H}. The remaining claim follows from \eqref{eq:metric-diff-est}. Let $\Gamma$ be a path family in $f(\Omega)$ and let $\rho$ be admissible for $\minv f(\Gamma)$. If $\Gamma_0$ is the exceptional family in Theorem \ref{thm:wug} then $\Mod_v\Gamma_0=0$ and, for any $\gamma\in \Gamma\setminus \Gamma_1$ we have 
\begin{align*}
1\le \int_{\minv f\circ\gamma}\rho\ud s&=\int_0^1\rho(\minv f(\gamma_t))|(\minv f\circ\gamma)_t'|\ud t\le (K_IK_O)^{1/n}\int_0^1\rho\circ \minv f(\gamma_t)H(\gamma_t)|\gamma_t'|\ud t\\
&=(K_IK_O)^{1/n}\int_\gamma(\rho\circ \minv f)\cdot H\ud s.
\end{align*}
Thus $(K_IK_O)^{1/n}(\rho\circ \minv f)H$ is admissible for $\Gamma\setminus \Gamma_0$ implying
\begin{align*}
\Mod_n\Gamma&=\Mod_n\Gamma\setminus\Gamma_0\le K_IK_O\int_{f(\Omega)}\rho^n\circ \minv f H^n\ud y\\
&\le K_IK_O\int_{f(\Omega)}\rho^n\circ \minv f\J \minv f\ud y=K_IK_O\int_{\Omega_f}\rho^n\ud\Ha^n.
\end{align*}
Taking infimum over $\rho$ yields $\Mod_n\Gamma\le K_IK_O\Mod_n\minv f(\Gamma)$ and completes the proof.
\end{proof}

The other inequality requires more work. It is based on the following proposition and an induction argument involving a similar approximation as in the proof of Theorem \ref{thm:lip-pullback-flat-form}. 

\begin{proposition}\label{prop:inv QC}
Let $h:X\to Y$ be a homeomorphism in $N^{1,n}_{loc}(X,Y)$ with $|Dh|^n\le K\ \J h$ and Lusin's property $(N)$ between $n$-rectifiable spaces satisfying the infinitesimal $n$-PI. If $h\inv \in N^{1,n}_{loc}(Y,X)$ and $h\inv $ has Lusin's property $(N)$, then $|D(h\inv)|^n\le K\J (h\inv)$.
\end{proposition}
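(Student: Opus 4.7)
The plan is to prove the pointwise inequality $|D(h^{-1})|^n(y) \le K\J(h^{-1})(y)$ at $\Ha^n$-almost every $y \in Y$, by reducing to a pointwise comparison at the corresponding preimage. First I would employ a Lusin-type approximation for Newtonian maps (in the spirit of Proposition \ref{prop:lusin-approx}): since $h$ and $h^{-1}$ lie in $N^{1,n}_{loc}$ with Lusin's $(N)$, both can be decomposed, up to null sets, into countable unions of sets on which they extend to Lipschitz maps. On these pieces, the infinitesimal $n$-Poincar\'e inequality on $X$ and $Y$ yields $|Dh|(x) = \Lip h(x)$ and $|D(h^{-1})|(y) = \Lip h^{-1}(y)$ $\Ha^n$-almost everywhere, with the $n$-rectifiability of both spaces ensuring compatibility with the differentiable structure and localization of the minimal upper gradient.

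Next, the two-sided Lusin property $(N)$ together with the area formula (applied once for $h$ and once for $h^{-1}$) gives the Jacobian reciprocity $\J h(x)\cdot \J(h^{-1})(h(x)) = 1$ for $\Ha^n$-a.e. $x \in X$. At points of bi-metric differentiability — which form a set of full $\Ha^n$-measure by the $n$-rectifiability — one also has the elementary identity $\Lip h^{-1}(h(x)) = 1/\lip h(x)$, where $\lip h(x) := \liminf_{x'\to x} d_Y(h(x'),h(x))/d_X(x',x)$. Combining these identifications, the desired inequality is equivalent to $\J h(x) \le K\,\lip h(x)^n$ at a.e. $x \in X$.

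The main obstacle lies precisely here: the hypothesis $\Lip h(x)^n \le K\,\J h(x)$ controls the upper pointwise dilatation, but a priori says nothing about the lower dilatation $\lip h(x)$, and the naive inversion yields an inequality in the wrong direction. The resolution must come from the extra rigidity of the Sobolev structure implied by the infinitesimal $n$-PI on both spaces. Concretely, I would view the differential of $h$ in Cheeger-type charts (as already used in the proof of Proposition \ref{prop:inf-n-PI}) as a linear map between $n$-dimensional differentiable structures, invoke the $n$-weak chart property to match the metric Jacobian with the chart determinant, and then translate the hypothesis into a linear-algebraic statement on singular values that inverts cleanly with constant $K$ under the symmetric differentiability available here — with the target application being QR maps where $K = K_IK_O$ is a symmetric distortion by design.
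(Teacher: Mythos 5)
Your reduction to the pointwise inequality $\J h(x)\le K\,\lip h(x)^n$ at $\Ha^n$-a.e.\ $x$ is a correct reformulation, and you are right that the hypothesis $\Lip h(x)^n\le K\,\J h(x)$ does not hand you this. But the proposed resolution — passing to singular values and claiming the linear-algebraic statement ``inverts cleanly with constant $K$'' — is where the argument fails. If $L$ has singular values $\sigma_1\ge\cdots\ge\sigma_n>0$ and $\sigma_1^n\le K\prod_i\sigma_i$, the sharp consequence one can extract is $\prod_i\sigma_i\le K^{n-1}\sigma_n^n$; one cannot do better. Concretely, $L=\operatorname{diag}(1,\ldots,1,1/a)$ with $a>1$ satisfies $\|L\|^n=1\le a\,\det L$ (outer distortion $a$), while $L^{-1}=\operatorname{diag}(1,\ldots,1,a)$ has $\|L^{-1}\|^n/\det L^{-1}=a^{n-1}$. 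So the pointwise route gives $K^{n-1}$, not $K$, for $n\ge 3$. Invoking Cheeger charts or the $n$-weak chart property does not change this: those identifications only tell you that the minimal upper gradient and the metric Jacobian are computed from $D_xh$; they do not alter the linear algebra of inverting $D_xh$. The remark that the eventual application has ``symmetric distortion by design'' may be true for the specific map $\minv f$, but it is not a proof of the general proposition.

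The paper's proof does not go through pointwise singular-value estimates at all. It shares your preliminary identifications (minimal upper gradient with $\|D_xh\|_{op}$, Jacobian with $\J(D_xh)$ via Lusin's properties and the area formula), and reformulates the analytic QC inequality as the modulus inequality $\Mod_n\Gamma\le K\Mod_n h(\Gamma)$. It then uses rectifiability and the infinitesimal $n$-PI to identify Gigli's co-tangent module $L^n(T^*Z)$ with the $n$-integrable sections of the rectifiable co-tangent bundle (citing \cite[Theorems 1.2 and 1.3]{teri-toni-enrico}), under which the pull-back operator $h^*$ becomes the bundle map $(Dh)^*$. Because $\|Dh\|_{op}\le K\J(Dh)$ forces $(Dh)^*$ to be an isomorphism, and because $h^{-1}\in N^{1,n}_{loc}$, the hypotheses of \cite[Theorem 3.13]{iko-luc-pas24} are met and the conclusion is read off from that theorem. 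The essential difference is that the paper outsources the constant-tracking to a module-theoretic duality statement rather than attempting a pointwise algebraic inversion; your route, as sketched, stalls at exactly the step you flagged.
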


\begin{proof}
For a Sobolev map $f\in N^{1,n}(Z,W)$ with Lusin's properties $(N)$ and $(N\inv)$ between $n$-rectifiable spaces satisfying the infinitesimal $n$-PI, the minimal upper gradient $|Df|$ of $f$ coincides $\Ha^n$-a.e. $x\in Z$ with $\|D_xf\|_{op}$, where $D_xf$ is the approximate differential of $f$ at $x$, and the volume Jacobian of \cite{wil12b}  coincides with $\J f=\J(D_xf)$. Thus the inequality $|Df|^n\le K\ \J f$ is equivalent with the modulus inequality 
\[
\Mod_n\Gamma\le K\Mod_nf(\Gamma)
\]
for path families $\Gamma$ in $X$.

Now, by rectifiability and the infinitesimal Poincar\'e inequality we have that Gigli's co-tangent module $L^n(T^*Z)$ coincides with the $n$-integrable sections $\Gamma_n(T^*Z)$ of the co-tangent bundle given by the rectifiable atlas, cf. \cite[Theorems 1.2 and 1.3]{teri-toni-enrico}, $Z=X,Y$. The claim now follows from \cite[Theorem 3.13]{iko-luc-pas24}. Indeed, Using the above identification (and the fact that $h$ has Lusin's properties $(N)$ and $(N\inv)$ we have that the pull-back operator $h^*$ constructed in \cite[Theorem 3.4]{iko-luc-pas24} coincides with $(Dh)^*:T^*Y\to T^*X$, where $Dh:TX\to TY$ is the bundle map given by the a.e. defined approximate differential of $h$. Since
\[
\|Dh\|_{op}\le K\J(Dh),
\]
$(Dh)^*$ is an isomorphism, proving condition (i) in \cite[Theorem 3.13]{iko-luc-pas24}, while condition (ii) in \cite[Theorem 3.13]{iko-luc-pas24} is satisfied since $h\inv \in N_{loc}^{1,p}(Y,X)$.
\end{proof}

\begin{theorem}\label{thm:inv QC}
If $f:\Omega\to \R^n$ is a quasiregular map of finite degree $d>0$, then the multi-valued inverse  satisfies $\minv f\inv \in N_{loc}^{1,n}(\Omega_f,\R^n)$ and 
\begin{align*}
    |D\minv f\inv|\le \frac{(K_IK_O)^{1/n}}{H\circ \minv f\inv},\quad |D\minv f\inv|^n\le K_IK_O\J(\minv f\inv).
\end{align*}
Here
\[
H(y):=(f_\ast(|Df|^{-2})(y))^{1/2},\quad y\in f(\Omega).
\]
\end{theorem}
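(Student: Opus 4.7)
The plan is to reduce the statement to Proposition~\ref{prop:inv QC}, applied to the homeomorphism $h := \minv f : f(\Omega) \to \Omega_f$, once I have secured $\minv f\inv \in N^{1,n}_{loc}(\Omega_f,\R^n)$ together with the quantitative upper-gradient bound $|D\minv f\inv| \le (K_IK_O)^{1/n}/(H\circ\minv f\inv)$. All the hypotheses of Proposition~\ref{prop:inv QC} are already available: $\minv f$ is a homeomorphism (Lemma~\ref{lem:homeo}), it lies in $N^{1,n}_{loc}$ with $|D\minv f|^n \le K_IK_O\,\J\minv f$ by combining Corollary~\ref{cor:sob-reg} with \eqref{eq:jacob-vs-H}, $\Omega_f$ is $n$-rectifiable (Lemma~\ref{lem:basic-prop-of-Omega_f}) and supports the infinitesimal $n$-Poincar\'e inequality (Proposition~\ref{prop:inf-n-PI}), while the two Lusin properties are furnished by Proposition~\ref{prop:area-formula}. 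Once the upper-gradient bound for $\minv f\inv$ is in place, Proposition~\ref{prop:inv QC} supplies the Jacobian inequality $|D\minv f\inv|^n \le K_IK_O\,\J\minv f\inv$ at once.

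To obtain the Sobolev regularity and the upper-gradient bound, I would establish the following curve-wise estimate: for $\Mod_n$-a.e.\ curve $\tilde\gamma$ in $\Omega_f$, the composition $\gamma := \minv f\inv \circ \tilde\gamma$ is absolutely continuous in $f(\Omega)$ and satisfies $|\gamma_t'| \le (K_IK_O)^{1/n}H(\gamma_t)^{-1}|\tilde\gamma_t'|$ a.e. The geometric idea is local decomposition: on the open set where $\tilde\gamma$ avoids the $\Ha^n$-null singular set $\minv f(f(B_f))$ (see Lemma~\ref{lem:basic-prop-of-Omega_f}), a local decomposition $\tilde\gamma = \bb{\tilde\gamma_1,\ldots,\tilde\gamma_d}$ exists in which each $\tilde\gamma_j$ is a lift of $\gamma$, i.e.\ $f\circ\tilde\gamma_j = \gamma$. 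The chain-rule inequality $|\gamma_t'|\le\|Df(\tilde\gamma_j)\|\,|\tilde\gamma_j'|$ rearranges to $\|Df(\tilde\gamma_j)\|^{-2}|\gamma_t'|^2 \le |\tilde\gamma_j'|^2$, and summing over $j$ while invoking $|\tilde\gamma_t'|^2 = \sum_j |\tilde\gamma_j'|^2$ yields $H(\gamma_t)|\gamma_t'| \le |\tilde\gamma_t'|$, which is in fact sharper than the claimed bound.

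The main obstacle is to make this argument effective along $\Mod_n$-almost every $\tilde\gamma$, which forces one to control the portion of $\tilde\gamma$ entering the singular set $\minv f(f(B_f))$, where the local decomposition degenerates and curves need not be avoidable by $n$-modulus alone. Mirroring the structure of the proof of Theorem~\ref{thm:lip-pullback-flat-form}, I would proceed by induction on $d$, with the base case $d=1$ being the classical quasiconformal inverse theorem ($K_O(f\inv) = K_I(f)$). In the inductive step, I would approximate $\minv f$ by the barycentric-interpolation maps $f_\varepsilon$ of Proposition~\ref{prop:multi-valued-approx}, which on each ball $B_x$ decompose as $\bb{f_{\varepsilon,x}^0, f_{\varepsilon,x}^1}$ with $d_0 + d_1 = d$ and $d_0, d_1 < d$; apply the inductive hypothesis to each factor; patch via a partition of unity as in the proof of Theorem~\ref{thm:lip-pullback-flat-form}; and pass to the limit $\varepsilon \to 0$ using the uniform Lipschitz bound of Proposition~\ref{prop:multi-valued-approx} and the fact that $|F_{2L\varepsilon}\setminus F| \to 0$. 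The resulting upper-gradient estimate for $\minv f\inv$, together with Proposition~\ref{prop:inv QC}, then yields the theorem.
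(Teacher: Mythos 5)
Your overall scaffolding is sound: the hypotheses of Proposition~\ref{prop:inv QC} are indeed available exactly as you list them, induction on $\deg(f)$ with the classical base case $d=1$ is the right skeleton, and the formal curve-wise computation giving $H(\gamma_t)|\gamma_t'|\le|\tilde\gamma_t'|$ is correct where the lift decomposition is available. However, the mechanism you propose for the induction step does not close. You want to approximate $\minv f$ by the barycentric interpolations $f_\varepsilon$ of Proposition~\ref{prop:multi-valued-approx} and ``apply the inductive hypothesis to each factor $f_{\varepsilon,x}^0,f_{\varepsilon,x}^1$'' of the local decomposition. This runs into two obstructions: first, Proposition~\ref{prop:multi-valued-approx} is a statement about \emph{Lipschitz} maps, while $\minv f$ is only in $W^{1,n}_{loc}$; second, and more seriously, the factors $f_{\varepsilon,x}^i$ produced by the decomposition of the barycentric interpolant are merely Lipschitz multi-valued maps into $\mathcal A_{d_i}(\R^n)$ --- they are \emph{not} multi-valued inverses of quasiregular maps, and the inductive hypothesis of the theorem (which concerns $\minv g\inv$ for QR maps $g$ of degree $\le k$) simply does not apply to them. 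In other words, the barycentric interpolation destroys the property of ``being an inverse of a QR map,'' which is the whole content of the induction.

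The paper's actual proof uses a different decomposition and a different regularization, and the distinction matters. Away from the diagonal set $A=\Omega_f\cap\diag\mathcal A_d(\Omega)$, for any $z_0\notin A$ one can choose a preimage $x_0\in(\minv f\circ f)\inv(z_0)$ with $\iota(f,x_0)<d$, restrict $f$ to a normal neighborhood $U_0$ of $x_0$, and obtain a genuine QR map $f_0=f|_{U_0}$ of \emph{strictly smaller degree}; the inductive hypothesis then applies to $\minv{f_0}\inv$, and $\minv f\inv|_{B_0}$ is recovered by composing with the $1$-Lipschitz projection $P_0$, with Proposition~\ref{prop:inv QC} upgrading the upper-gradient estimates to analytic quasiconformality at each stage. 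The remaining difficulty is the set $A$, where every preimage of $\minv f\inv(z_0)$ has index $d$ and no degree reduction is possible --- not the set $\minv f(f(B_f))$ you flag. The key observation you are missing is that on $A$ one has the identity $\minv f\inv=f\circ b$, with $b$ the $1/\sqrt d$-Lipschitz barycenter of Lemma~\ref{lem:barycenter}; this allows the paper to interpolate $\minv f\inv$ directly with the smooth-Lipschitz map $f\circ b$ near $A$ (the $G_\varepsilon$ construction), which is what actually delivers $\minv f\inv\in N^{1,n}_{loc}(\Omega_f,\R^n)$ globally. That regularization of $\minv f\inv$ is not the barycentric interpolation of Proposition~\ref{prop:multi-valued-approx} applied to $\minv f$, and conflating the two leaves a genuine gap in your argument.
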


\begin{proof}
We will prove the claim by induction on the degree of $f$. Denote by $I(k)$ the induction hypothesis: $\minv f\inv\in N^{1,n}_{loc}(U_f,\R^n)$ and $|D\minv f\inv|\le\frac{C(f)}{H\circ \minv f\inv}$ for every quasiregular map $f:U\to \R^n$ of degree $\le k$ from an open set $U\subset \R^n$, where $C(f)=(K_I(f)K_O(f))^{1/n}$. When $k=1$, the claim is classical. Indeed, a locally homeomorphic quasiregular map is quasiconformal \cite[Chapter III, Corollary 3.8]{ric93}, and its inverse is quasiconformal with the correct bound \cite[Chapter II, Corollary 6.5]{ric93}. Thus $I(1)$ holds.

Suppose now that $I(k)$ holds for some $k\in \N$. We will prove that $I(k+1)$ holds. Let $d=k+1$ and let $f:\Omega\to \R^n$ be a quasiregular map of degree $d$. Set $A=\Omega_f\cap \diag\mathcal A_d(\Omega)$ and note that, for all $z_0\in \Omega_f\setminus A$ there exists $x_0\in (\minv f\circ f)\inv(z_0)$ and $r_0>$ such that $\iota(f,x_0)<d$ and the map $$f_{0}:=f|_{U_{\minv f\circ f}(x_0,r_0)}:U_{\minv f\circ f}(x_0,r_0)\to F\inv B(z_0,r_0)$$ is a quasiregular map of degree $\le k$. Denote $U_{0}=U_{\minv f\circ f}(x_0,r_0)$, $B_0=B(z_0,r_0)$, $\minv f_0:F\inv B_0\to \Omega_{f_{0}}$ (the multivalued inverse of $f_0$), and $H_0=\big(f_{0\ast}(|Df_0|^{-2}))^{1/2}$. By induction hypothesis $\minv f_0\inv\in N^{1,n}_{loc}((U_0)_{f_0},\R^n)$ and 
\begin{align*}
    |D\minv f_0\inv|\le \frac{C(f_0)}{H_0\circ \minv f_0\inv}.
\end{align*}

By Proposition \ref{prop:inv QC} we have that $\minv f_0\inv:(U_0)_{f_0}\to F\inv B_0$ is analytically quasiconformal. Thus $\minv f\circ \minv f_0\inv:(U_0)_{f_0}\to B_0$ and $P_0:=(\minv f\circ \minv f_0\inv)\inv:B_0\to (U_0)_{f_0}$ are analytically quasiconformal (again by Proposition \ref{prop:inv QC}, since $P_0$ is 1-Lipschitz). Thus $\minv f\inv|_{B_0}=\minv f_0\inv\circ P_0$ is analytically quasiconformal and moreover
\begin{align*}
|D\minv f\inv|&=|D(\minv f_0\inv\circ P_0)|\le |D\minv f_0\inv|\circ P_0|DP_0|\\
&\le \frac {C(f_0)}{H_0\circ \minv f\inv }\cdot C(f)\frac{H_0\circ \minv f\inv}{C(f_0)H\circ \minv f\inv}=\frac {C(f)}{H\circ \minv f\inv}\quad\textrm{on }B_0.
\end{align*}
By repeating this argument for all points $z_0\in \Omega_f\setminus A$ it follows that $\minv f\inv|_{\Omega_f\setminus A}\in N_{loc}^{1,n}(\Omega_f\setminus A,\R^n)$ with
\begin{align}\label{eq:inv QC}
    |D\minv f\inv|\le \frac{C(f)}{H\circ \minv f\inv} \quad\textrm{a.e. on }\Omega_f\setminus A.
\end{align}

Fix a smooth function $\varphi:\R\to [0,1]$ such that $\varphi=1$ in a neighbourhood of $0$, $\spt\varphi\subset (-1,1)$ and $\LIP(\varphi)\le 2$. For each $\varepsilon>0$ set
\begin{align*}
    G_\varepsilon:=\varphi\Big(\frac{f\circ b-\minv f\inv}{\varepsilon}\Big)f\circ b+ \Big(1-\varphi\Big(\frac{f\circ b-\minv f\inv}{\varepsilon}\Big)\Big)\minv f\inv.
\end{align*}
Note that $G_\varepsilon=f\circ b$ in a neighbourhood of $A$, $G_\varepsilon=\minv f\inv$ in $\Omega_f\setminus B(A,\varepsilon)$, and $G_\varepsilon\in N_{loc}^{1,n}(\Omega_f,\R^n)$.\footnote{Indeed, for all points in $B(A,\varepsilon)\setminus A$, $G_\varepsilon$ is a composition of maps in $W^{1,n}$.}

Writing $G_\varepsilon=\minv f\inv +\varphi\Big(\frac{f\circ b-\minv f\inv}{\varepsilon}\Big)(f\circ b-\minv f\inv)$ we have 
\begin{align*}
|DG_\varepsilon|\le &|D\minv f\inv|+|f\circ b-\minv f\inv|\Big|D\varphi\circ\Big(\frac{f\circ b-\minv f\inv}{\varepsilon}\Big)\Big|\\
&+\varphi\Big(\frac{f\circ b-\minv f\inv}{\varepsilon}\Big)|D(f\circ b-\minv f\inv)|\\
\le & |D\minv f\inv|+|f\circ b-\minv f\inv|\cdot 2\chi_{(-1,1)}\Big(\frac{f\circ b-\minv f\inv}{\varepsilon}\Big) \frac{|D(f\circ b-\minv f\inv)|}{\varepsilon}\\
&+\varphi\Big(\frac{f\circ b-\minv f\inv}{\varepsilon}\Big)|D(f\circ b-\minv f\inv)|\\
\le &|D\minv f\inv|+3|D(f\circ b-\minv f\inv)|.
\end{align*}
Moreover
\begin{align*}
G_\varepsilon-\minv f\inv=\varphi\Big(\frac{f\circ b-\minv f\inv}{\varepsilon}\Big)(f\circ b-\minv f\inv)
\end{align*}
which converges to 0 locally uniformly. Together with the uniform bound $|DG_\varepsilon|\le |D\minv f\inv |+3|D(f\circ b-\minv f\inv)|$ (independent of $\varepsilon$), this implies that $\minv f\inv\in N_{loc}^{1,n}(\Omega_f,\R^n)$. Since $\Ha^n(A)=0$, \eqref{eq:inv QC} implies the bound on of the minimal upper gradient in the claim. The inequality $|D\minv f\inv|^n\le K_IK_O\J (\minv f\inv)$ follows from Proposition \ref{prop:inv QC}. This completes the induction step and the proof.
\end{proof}

We close the section with the proof of Theorem \ref{thm:geom-QC}.
\begin{proof}[Proof of Theorem \ref{thm:geom-QC}]
The first claim (1) is contained in Lemma \ref{lem:basic-prop-of-Omega_f}, Proposition \ref{prop:ball-meas-upper-bound} and Proposition \ref{prop:inf-n-PI}. Indeed, $n$-rectifiability follows from the fact that $\Omega_f$ is the image of a Sobolev map from an open set in $\R^n$ with property $(N)$, upper Ahlfors $n$-regularity proved in Proposition \ref{prop:ball-meas-upper-bound}, and the infinitesimal Poincar\'e inequality in Proposition \ref{prop:inf-n-PI}.

The second claim (2) follows by combining Proposition \ref{prop:QC} and Theorem \ref{thm:inv QC}. Indeed, the analytic quasiconformality of the inverse $\minv f\inv$, given by Theorem \ref{thm:inv QC}, is equivalent with the modulus inequality 
\[
\Mod_n\minv f(\Gamma)\le K_IK_O\Mod_n \Gamma
\]
for path families in $f(\Omega)$.
\end{proof}

\appendix
\section{Maximal Hilbertian factor of Almgren space}\label{sec:appendix} Throughout this section we fix integers $d,n\ge 1$. By convention $\mathcal A_0(\R^n)=\{0\}$. Recall the barycenter map $b:\mathcal A_d(\R^n)\to \R^n$ \eqref{eq:barycenter}, and define
\begin{align*}
\mathcal Z_d:=\{\bb{\bar x}\in \mathcal A_d(\R^n):\ b(\bb{\bar x})=0\}.   
\end{align*}
We let $b_d:(\R^n)^d\to\R^n$ be the linear map $\bar x\mapsto \frac{x_1+\cdots+x_d}{d}$ and note that $b(\bb{\bar x})=b_d(\bar x)$. Given $\bb{\bar x}=\bb{x_1,\ldots,x_d}\in\mathcal A_d(\R^n)$, set $\bb{\bar x-b(\bar x)}:=\bb{x_1-b(\bar x),\ldots,x_d-b(\bar x)}$ and note that 
\[
\bb{\bar x-b(\bar x)}\in \mathcal Z_d\quad\textrm{for all }\bb{\bar x}\in\mathcal A_d(\R^n).
\]
Denote $\pi_{\mathcal Z}:\mathcal A_d(\R^n)\to \mathcal Z_d$ the map $\bb{\bar x}\mapsto \bb{\bar x-b(\bar x)}$.

The factorization in Proposition \ref{prop:splitting} below reflects the fact that the diagonal of $(\R^n)^d$ is the invariant set of the action of $S_d$. Indeed, it is easy to see that $\mathcal L$ maps $\diag\mathcal A_d(\R^n)$ to the first factor. 
\begin{proposition}\label{prop:splitting}
    Equip $\R^n\times \mathcal{Z}_d$ with the Euclidean product metric. Then the map $\mathcal L:\mathcal A_d(\R^n)\to \R^n\times \mathcal Z_d$ given by
\begin{align*}
\mathcal L(\bb{\bar x})=(\sqrt db(\bb{\bar x}),\bb{\bar x-b(\bb{\bar x})})
\end{align*}
is an isometric homeomorphism. Moreover
\begin{itemize}
    \item[(i)] $\mathcal Z_d$ does not contain any geodesic line (i.e. isometry $\R\to \mathcal Z_d$).
    \item[(ii)] there exists an open map $F:\mathcal A_{d-1}(\R^n)\to \mathcal Z_d$ such that $\#F\inv(p)\le d$ for all $p\in \mathcal Z_d$, and $\ell(F\circ\gamma)=\ell(\gamma)$ for all curves $\gamma$ in $\mathcal A_{d-1}(\R^n)$.
\end{itemize}
\end{proposition}

We remark that (i) and (ii) are not needed anywhere in this paper, but are included out of independent interest. Property (i) states that $\R^n$ is the maximal Hilbert space factor of $\mathcal A_d(\R^n)$, while property (ii) says that $\mathcal Z_d$ admits a branched covering from $\mathcal A_{d-1}(\R^n)$.

Before providing the proof we record the following immediate corollary, which is used in Section \ref{sec:QR-curve}.
\begin{corollary}\label{cor:barycenter}
\begin{itemize}
    \item[(i)] The barycenter map \eqref{eq:barycenter} is $1/\sqrt d$-Lipschitz;
    \item[(ii)] If $f:U\to \mathcal A_d(\R^n)$ is approximately differentiable at $x$, then we have the inequalities
\begin{align*}
\|D_xf\|\le \sqrt{d \|D_x(b\circ f)\|^2+\|D_x(\pi_\mathcal Z\circ f)\|^2},\\
\max\{\sqrt d \|D_x(b\circ f)\|,\|D_x(\pi_\mathcal Z\circ f)\|\}\le \|Df\|.
\end{align*}
\end{itemize} 
\end{corollary}
\begin{proof}
For each $\bb{\bar x},\ \bb{\bar y}\in \mathcal A_d(\R^n)$ we have
\begin{align*}
d|b(\bb{\bar y})-b(\bb{\bar x})|^2\le d|b(\bb{\bar y})-b(\bb{\bar x})|^2+d^2_{\mathcal A}(\bb{\bar x-b(\bar x)},\bb{\bar yb(\bar y)})=d_{\mathcal A}^2(\bb{\bar y},\bb{\bar x}).
\end{align*}
This proves (i). To see (ii), simply apply the isometric splitting of Proposition \ref{prop:splitting} to the observation that the operator norm of a linear map $L:\R^m\to \R^a\times \R^b$ satisfies 
\[
\max\{\|P_a\circ L\|,\|P_b\circ L\|\}\le \|L\|\quad\textrm{and}\quad \|L\|\le  \sqrt{\|P_a\circ L\|^2+\|P_b\circ L\|^2}
\]
\end{proof}

\begin{proof}[Proof of Proposition \ref{prop:splitting}]
Let $\bb{\bar x}=\bb{x_1,\ldots,x_d},\bb{\bar y}=\bb{y_1,\ldots,y_d}\in \mathcal A_d(\R^n)$. By relabeling we may assume that 
\[
d_\mathcal A^2(\bb{\bar x},\bb{\bar y})^2=\sum_j^d|x_j-y_j|^2.
\]
Then also 
\begin{align}\label{eq:isom}
d_\mathcal A(\bb{x-b(\bar x)},\bb{\bar y-b(\bar y)})^2=\sum_j^d|x_j-y_j-b(\bar{x}-\bar{y})|^2.
\end{align}
Indeed, if $\displaystyle \sum_j^d|x_j-y_{\sigma(j)}-b(\bar{x}-\bar{y})|^2<\sum_j^d|x_j-y_j-b(\bar{x}-\bar{y})|^2$ for some $\sigma\in S_d\setminus\{id\}$, expanding the squares and eliminating the terms $|b(\bar x-\bar y)|^2$ from both sides we obtain:
\begin{align*}
\sum_j^d[|x_j-y_{\sigma(j)}|^2-2\langle x_j-y_{\sigma(j)},b(\bar x-\bar y)\rangle]&< \sum_j^d[|x_j-y_{j}|^2-2\langle x_j-y_{j},b(\bar x-\bar y)\rangle]\\
\Longleftrightarrow\\
\sum_j^d|x_j-y_{\sigma(j)}|^2&<\sum_j^d[|x_j-y_{j}|^2,
\end{align*}
since 
\[
\sum_j^d\langle x_j-y_{\sigma(j)},b(\bar x-\bar y)\rangle=\sum_j^d\langle x_j-y_{j},b(\bar x-\bar y)\rangle.
\]
This would be a contradiction, and thus \eqref{eq:isom} is established.

Denoting  $z=(x_1-y_1,\ldots,x_d-y_d)\in (\R^n)^d$, we recall that the average satisfies 
\[
\sum_j^d\|z_j-b(z)\|^2=\sum_j^d\|z_j\|^2-d\|b(z)\|^2,
\]
which yields 
\begin{align*}
d_\mathcal A(\bb{x-b(\bar x)},\bb{\bar y-b(\bar y)})^2=d_\mathcal A^2(\bb{\bar x},\bb{\bar y})^2-d\|b(\bar x)-b(\bar y)\|^2.
%\|b(\bar x)-b(\bar y)\|^2=\|b(z)\|^2\le \frac 1d\sum_j^d\|z_j\|^2=\frac 1dd_\mathcal A(\bar x,\bar y)^2.
\end{align*}
This proves that $\mathcal L$ is an isometry. Since it is clearly a surjection, it is an isometric homeomorphism. Properties (i) and (ii) in the claim are proved in Lemmas \ref{lem:(i)} and \ref{lem:(ii)}, respectively.
\end{proof}

\begin{lemma}\label{lem:(i)}
There does not exist an isometric embedding $\R\to \mathcal Z_d$.
\end{lemma}
In the proof we use the projection map $\pi:(\R^n)^d\to \mathcal A_d(\R^n)$ which is 1-BLD, i.e. length preserving, discrete and open (cf. \cite{lui17} for the terminology). Note in particular that $\pi|_{\ker(b_d)}:\ker(b_d)\to \mathcal Z_d$ is also 1-BLD. 
\begin{proof}
Suppose $\gamma: \R\to \mathcal Z_d$ is an isometry. Since $\pi|_{\mathcal Z_d}$ is 1-BLD, by \cite[Lemma 2.8]{lui17} there exist an affine geodesic line $\tilde \gamma(t)=a+tv$ in $\ker(b_d)\subset (\R^n)^d$ so that $\pi \circ\tilde\gamma=\gamma$. It follows that $a,v\in \ker(b_d)$, and $|v|=1$. Given $t\in \R$ and $\sigma\in S_d$ we have
\begin{align*}
    (2t)^2=d_\mathcal A(\gamma_{-t},\gamma_{t})^2\le \sum_j^d|\tilde\gamma_j(-t)-\tilde\gamma_{\sigma(j)}(t)|^2=\sum_j^d|t(v_j+v_{\sigma(j)})-(a_j-a_{\sigma(j)})|^2.
\end{align*}
Diving by $t^2$ and taking the limit $t\to +\infty$ we obtain
\begin{align*}
4\le \sum_j^d|v_j+v_{\sigma(j)}|^2=\sum_j^d[|v_j|^2+2\langle v_j,v_{\sigma(j)}\rangle+|v_{\sigma(j)}|^2]=2+2\sum_j^d\langle v_j,v_{\sigma(j)}\rangle,
\end{align*}
or
\begin{align*}
1\le \sum_j^d\langle v_j,v_{\sigma(j)}\rangle,
\end{align*}
which can only hold if $v_j=v_{\sigma(j)}$ for $j=1,\ldots,d$. Since $\sigma\in S_d$ is arbitrary this implies that $v\in \diag(\R^n)^d$, but since $v\in \ker(b_d)$ we obtain $v=0$, a contradiction. This completes the proof.
\end{proof}

\begin{lemma}\label{lem:(ii)}
There exists $\lambda\in (0,1)$ such that the map $F:\mathcal A_d(\R^n)\to \mathcal Z_{d+1}$ given by
\begin{align*}
F(\bb{\bar x})=\bb{x_1-\lambda b(\bar x),\ldots,x_d-\lambda b(\bar x),d(\lambda-1)b(\bar x)},\quad \bb{\bar x}=\bb{x_1,\ldots,x_d},
\end{align*}
is open, $\#F\inv(p)\le d+1$ for all $p\in \mathcal Z_{d+1}$, and $\ell(F\circ\gamma)=\ell(\gamma)$ for all curves $\gamma$ in $\mathcal A_d(\R^n)$. Moreover, $F|_{\mathcal Z_d}$ is an isometric embedding.
\end{lemma}
\begin{proof}
Given $\lambda\in [0,1]$, consider the linear map $A_\lambda:(\R^n)^d\to (\R^n)^{d+1}$ given by
\begin{align*}
    A_\lambda(\bar x)=(x_1-\lambda b(\bar x),\ldots,x_d-\lambda b(\bar x),d(\lambda-1)b(\bar x)),\quad\bar x=(x_1,\ldots, x_d).
\end{align*}
Note that $b_{d+1}\circ A_\lambda =0$, which implies that $\operatorname{Im}(A_\lambda)\subset \ker(b_{d+1})$. Indeed, 
\begin{align*}
(d+1)b_{d+1}(A_\lambda (\bar x))&=d(\lambda-1)b_d(\bar x)+\sum_j^d(x_j-\lambda b_d(\bar x))\\
&=d(\lambda-1)b_d(\bar x)+db_{d}(\bar x)-\lambda d b_d(\bar x)=0.
\end{align*}

Moreover we may calculate
\begin{align*}
|A_\lambda(\bar x)|^2&=d^2(\lambda-1)^2|b_d(\bar x)|^2+\sum_j^d|x_j-\lambda b_d(\bar x)|^2\\
&=d^2(\lambda-1)^2|b_d(\bar x)|^2+\sum_j^d[|x_j|^2-2\lambda \langle x_j,b_d(\bar x)\rangle+\lambda^2|b_d(\bar x)|^2]\\
&=|\bar x|^2+|b_d(\bar x)|^2(d^2(\lambda -1)^2-2\lambda d+\lambda^2d).
\end{align*}
Denoting $P(\lambda)=d^2(\lambda -1)^2-2\lambda d+\lambda^2d$ we have $P(0)=d^2$ and $P(1)=-d$. Since $P$ is continuous, it takes the value 0 for some $\lambda \in (0,1)$. We fix this $\lambda$ for the rest of the proof and drop it from the subscript. Thus $A:(\R^n)^d\to (\R^n)^{d+1}$ is an isometric linear map with image $\ker(b_{d+1})$. 

Consequently the map $F:\mathcal A_d(\R^n)\to \mathcal Z_{d+1}$ given by 
\begin{align*}
F(\bb{\bar x})=\bb{A(\bar x)}
\end{align*}
is well-defined and satisfies $\ell(F\circ\gamma)=\ell(\gamma)$ for all curves $\gamma$ in $\mathcal A_d(\R^n)$. Openness follows from the definition of quotient topologies, while for $\bb{\bar x}\in \mathcal Z_d$ we have $F(\bb{\bar x})=\bb{x_1,\ldots,x_d,0}$ which is easily seen to be an isometry. 

It remains to prove that $\#F\inv(p)\le d+1$ for all $p$. If $p=F(\bb{\bar x})$, then $b_d(\bar x)$ can take at most $d+1$ distinct values (corresponding to the unordered components of $p$). But if $F(\bb{\bar x})=F(\bb{\bar y})$ and $b_d(\bar x)=b_d(\bar y)$, then $\bar x=\sigma \bar y$ for some $\sigma\in S_d$. This means that there can be at most $d+1$ distinct points $\bb{\bar x}\in\mathcal A_d(\R^n)$ so that $F(\bb{\bar x})=p$. 
\end{proof}

\begin{remark}
The proof shows that in fact $\#F\inv(p)$ is the number of distinct points in $p=\bb{p_1,\ldots,p_{d+1}}$.
\end{remark}

\bibliographystyle{plain}
\bibliography{abib}

\end{document}